\definecolor{orange}{rgb}{1,0.5,0}
\definecolor{brass}{rgb}{0.71, 0.65, 0.26}
\numberwithin{paragraph}{section}
\numberwithin{equation}{section}
\newcommand{\pshenvelope}{{P}}
\newcommand{\Acal}{\mathcal{A}}
\newcommand{\Bcal}{\mathcal{B}}
\newcommand{\Ccal}{\mathcal{C}}
\newcommand{\Dcal}{\mathcal{D}}
\newcommand{\Fcal}{\mathscr{F}}
\newcommand{\Gbb}{\mathbb{G}}
\newcommand{\Hcal}{\mathscr{H}}
\newcommand{\Jcal}{\mathcal{J}}
\newcommand{\Lcal}{\mathscr{L}}
\newcommand{\Ocal}{\mathcal{O}}
\newcommand{\Pbb}{\mathbb{P}}
\newcommand{\Rbb}{\mathbb{R}}
\newcommand{\Sbb}{\mathbb{S}}
\newcommand{\Tbb}{\mathbb{T}}
\newcommand{\Ucal}{\mathcal{U}}
\newcommand{\Xcal}{\mathscr{X}}
\newcommand{\Zcal}{\mathcal{Z}}
\newcommand{\fa}{\mathfrak{a}}
\newcommand{\fb}{\mathfrak{b}}
\newcommand{\C}{\mathbb{C}}
\newcommand{\N}{\mathbb{N}}
\newcommand{\Q}{\mathbb{Q}}
\newcommand{\R}{\mathbb{R}}
\newcommand{\Z}{\mathbb{Z}}
\newcommand{\Lan}{L^{\an}}
\DeclareMathOperator{\red}{{red}}
\DeclareMathOperator{\an}{{an}}
\DeclareMathOperator{\ev}{{ev}}
\DeclareMathOperator{\emb}{{emb}}
\DeclareMathOperator{\abs}{{abs}}
\DeclareMathOperator{\rint}{{relint}}
\DeclareMathOperator{\Val}{{val}}
\renewcommand{\and}{\operatorname{and}}
\DeclareMathOperator{\Div}{Div}
\DeclareMathOperator{\Id}{Id}
\DeclareMathOperator{\Spec}{{Spec}}
\DeclareMathOperator{\PSH}{PSH}
\DeclareMathOperator{\MA}{MA}
\DeclareMathOperator{\vol}{vol}
\newcommand{\metr}{{\|\phantom{a}\|}}
\newcommand{\absval}{{|\phantom{a}|}}
\renewcommand{\div}{{\operatorname{div}}}
\newcommand{\Xan}{{X^{\an}}}
\def\KO{{\mathcal O}}
\def\KL{{\mathscr L}}
\def\KX{{{\mathscr X}}}
\def\KY{{\mathscr Y}}
\def\KU{{\mathscr U}}
\def\an{{\mathrm{an}}}
\def\red{{\mathrm{red}}}
\def\metric{{\|\ \|}}
\def\metr{{\|\ \|}}
\def\modelD{{\mathscr D}}
\def\Pic{{{\rm Pic}\,}}
\def\kcirc{{K^\circ}}
\def\MA{{\rm MA}}
\def\PSH{{\rm PSH}}
\def\Da{{\mathfrak a}}
\def\Db{{\mathfrak b}}
\theoremstyle{plain}
\newtheorem{theo}{Theorem}[section]
\newtheorem{prop}[theo]{Proposition}
\newtheorem{lemma}[theo]{Lemma}
\newtheorem{cor}[theo]{Corollary}
\newtheorem{question}{Question}
\theoremstyle{definition}
\newtheorem{defi}[theo]{Definition}
\newtheorem{assumption}[theo]{Assumption}
\newtheorem{art}[theo]{}
\theoremstyle{remark}
\newtheorem{rem}[theo]{Remark}
\DeclareMathOperator{\Hom}{{Hom}}
\DeclareMathOperator{\rec}{{rec}}
\DeclareMathOperator{\cone}{{cone}}
\DeclareMathOperator{\conv}{{conv}}
\title[Continuity of Plurisubharmonic Envelopes] 
{Continuity of Plurisubharmonic Envelopes in 
Non-Archimedean Geometry {and Test Ideals}\\
(with an Appendix by
Jos\'e Ignacio Burgos Gil and Martín Sombra) }
 \author[W.~Gubler]{Walter Gubler}
\address{W. Gubler, Mathematik, Universit{\"a}t 
 Regensburg, 93040 Regensburg, Germany}
 \email{walter.gubler@mathematik.uni-regensburg.de}
 \author[P.~Jell]{Philipp Jell}
 \address{P. Jell,  
Georgia Institute of Technology,
686 Cherry Street,
Atlanta, GA 30332-0160}
\email{philipp.jell@math.gatech.edu}
 \author[K.~Künnemann]{Klaus K{\"u}nnemann}
 \address{K. K{\"u}nnemann, Mathematik, Universit{\"a}t 
 Regensburg, 93040 Regensburg, Germany}
 \email{klaus.kuennemann@mathematik.uni-regensburg.de}
 \author[F.~Martin]{Florent Martin}
 \address{F. Martin, Mathematik, Universit{\"a}t 
 Regensburg, 93040 Regensburg, Germany}
 \email{florent.martin@mathematik.uni-regensburg.de}
\date{\today}
 \thanks{W.~Gubler, 
K.~Künnemann and F.~Martin 
 were supported by the collaborative research 
 center SFB 1085 funded by the Deutsche Forschungsgemeinschaft. 
P.~Jell was for part of this work also supported by the SFB 1085 and for the other part by the DFG Research Fellowship JE 856/1-1. 
J.~I.~Burgos was partially supported by MINECO research projects MTM2016-79400-P and by ICMAT Severo Ochoa project SEV-2015-0554.
M.~Sombra was partially supported by the MINECO research project MTM2015-65361-P and through the "María de Maeztu" program for units of excellence MDM-2014-0445.}
\begin{document}

\begin{abstract}
Let $L$ be an ample line bundle on a smooth projective variety $X$ over a non-archimedean field $K$. 
For a continuous metric on $\Lan$, 
we show in the following two cases that the semipositive envelope is a continuous semipositive metric on $\Lan$ and that the non-archimedean Monge-Amp\`{e}re equation has a solution. 
First, we prove it for curves using results of Thuillier. 
Second, we show it under the assumption that $X$ is a surface defined geometrically over the function field of a curve over a perfect field $k$ of positive characteristic. 
The second case holds in higher dimensions if we assume resolution of singularities over $k$.   
The proof follows a strategy from Boucksom, Favre and Jonsson, replacing multiplier ideals by test ideals.
Finally,  the appendix by Burgos and Sombra provides an example of a semipositive metric whose retraction is not semipositive.
The example is based on the construction of
a toric variety which has 
two SNC-models which induce the same skeleton but different
retraction maps.
\bigskip

\noindent
MSC: Primary 32P05; Secondary  13A35, 14G22, 32U05
\end{abstract}

\date{\today}

\maketitle

\tableofcontents

\section{Introduction}

Let $L$ be an ample line bundle on an $n$-dimensional complex projective variety $X$ 
and let $\mu$ be a smooth volume form on the associated complex manifold $\Xan$ of total mass $\deg_L(X)$. 
The \emph{Calabi conjecture} claims that there is a smooth semipositive metric $\metr$ on $\Lan$, unique up to positive multiples,  solving the \emph{Monge--Amp\`ere equation}
\begin{equation}   \label{MA equation}
	c_1(L,\metr)^{\wedge n} = \mu.
\end{equation}
This was conjectured by Calabi who proved uniqueness \cite{Calabi-Proceedings, Calabi-incollection} and 
the existence part was solved by Yau \cite{yau78}. 
In fact, they proved a more general version in the setting of compact K\"ahler manifolds, but this will not be relevant for our paper. 

The motivation of this paper is the study of the non-archimedean version of this conjecture. 
We consider a non-archimedean field $K$ with valuation ring $\kcirc$. 
Let $L$ be an ample line bundle on an $n$-dimensional smooth projective variety $X$ over $K$. 
The line bundle $L$ induces a line bundle $L^\an$  
on  the analytification $\Xan$ of $X$ as a Berkovich 
non-archimedean analytic space. 
In non-archimedean  geometry, 
model metrics on $\Lan$ play a similar role as smooth metrics
on line bundles on complex manifolds. 
We call a model metric on $\Lan$ semipositive if it is induced by an nef model. 
Zhang {\cite{zhang1995b}} introduced continuous semipositive metrics on $\Lan$ as uniform limits of semipositive model metrics. 
For such metrics, Chambert-Loir {\cite{chambert-loir2006}} defined a Monge--Amp\`ere measure $c_1(L,\metr)^{\wedge n}$ on $\Xan$ 
which is a positive Radon measure of total mass $\deg_L(X)$.
These measures play an important role in arithmetic equidistribution results (see \cite{Yua08}). 
We refer to Section \ref{setup and goal}  for details about these notions.

In the \emph{non-archimedean Calabi--Yau problem}, one looks for continuous semipositive metrics on $\Lan$ 
solving the Monge--Amp\`ere equation \eqref{MA equation}. 
Yuan and Zhang \cite{yuan-zhang} proved that such a metric is unique up to constants. 
The existence of a singular semipositive solution was proven in the case of curves by Thuillier \cite[Cor.~3.4.13]{thuillier05:_theor}. 
Liu \cite{liu2011} proved existence of a continuous semipositive solution for totally degenerate abelian varieties $A$ if $\mu$ is a smooth volume form on the 
{canonical} skeleton of $A$. 

Next, we describe the fundamental existence result of Boucksom, Favre and Jonsson \cite{BFJ1, BFJ2}. 
We assume that $K$ is a complete discretely valued field with valuation ring $\kcirc$. 
We recall that an SNC-model is a regular model of $X$ such that the special fiber has simple normal crossing support. 
{Boucksom, Favre and Jonsson} prove in \cite[Thm.~A]{BFJ2} 
that the non-archimedean Calabi--Yau problem has a continuous semipositive solution $\metr$ if the following assumptions are satisfied:
\begin{itemize}
	\item[(a)] The characteristic of the residue field $\tilde{K}$ is zero.
	\item[(b)] The positive Radon measure $\mu$ is supported on the skeleton of a projective SNC-model of $X$ and satisfies $\mu(\Xan)=\deg_L(X)$. 
	\item[(c)] The smooth projective variety $X$ is of geometric origin from a $1$-dimensional family over $\tilde{K}$.
\end{itemize}	
The last condition will play an important role in this paper. 
More generally, we say that $X$ is \emph{of geometric origin from a $d$-dimensional family over the field $k$} 
if there is a codimension $1$ point $b$ in a normal variety $B$ over $k$ such that $\kcirc$ is the completion of $\Ocal_{B,b}$ and 
such that $X$ is defined over the function field $k(B)$. 
In \cite[Thm.~D]{BGJKM}, we have shown that 
{(c) is not necessary for the existence of a continuous semipositive solution of the non-archimedean Calabi--Yau problem if we assume (a) and (b).}

We will later look for a similar result in equicharacteristic $p >0$. 
To do so, we have to understand the basic ingredients in the proof 
of the existence result of Boucksom, Favre and Jonsson. 
In \cite{BFJ1}, {the authors} develop a global pluripotential theory on $\Xan$ for singular semipositive metrics 
using the piecewise linear structure on the skeletons of SNC-models. 
It is here where Assumption (a) enters the first time as resolution of singularities is used to have sufficiently many SNC-models of $X$ at hand. 
For a continuous metric $\metr$, we define the \emph{semipositive envelope} $\pshenvelope(\metr)$ by
\begin{equation}\label{envelope-metric}
\pshenvelope(\metr) \coloneqq \inf \{ \metr' \mid  \text{$\metr \leq \metr'$  {and $\metr'$ is a semipositive model metric on $\Lan$}} \}.
\end{equation}
It is absolutely crucial for pluripotential theory to prove that $\pshenvelope(\metr)$ is continuous as 
this is equivalent to the monotone regularization theorem (see \cite[Lemma 8.9]{BFJ1}). 
The monotone regularization theorem is the basis in \cite{BFJ2} to introduce 
the Monge--Amp\`ere measure, capacity and energy for singular semipositive metrics. 
The proof of continuity of $\pshenvelope(\metr)$ in \cite[\S 8]{BFJ1} uses multiplier ideals on regular projective models. 
In the proof of the required properties of multiplier ideals (see \cite[Appendix B]{BFJ1}), 
the authors use vanishing results which hold only in characteristic zero, 
and hence Assumption (a) plays an important role here as well. 

A second important result is the \emph{orthogonality property} for $\pshenvelope(\metr)$ given in \cite[Thm.~7.2]{BFJ2}. 
Multiplier ideals occur again in their proof and it is here where the geometric assumption (c) is used. 
However, it is shown in \cite[Thm.~6.3.3]{BGJKM} that continuity of $\pshenvelope(\metr)$ is enough 
to prove the orthogonality property without assuming (a) or (c). 
Then the variational method of Boucksom, Favre and Jonsson can be applied to prove existence of a {continuous semipositive} solution for the non-archimedean Calabi--Yau problem.

This makes it very clear that continuity of the semipositive envelope $\pshenvelope(\metr)$ plays a crucial role in the non-archimedean Calabi--Yau problem.
It is the main object of study in this paper. 
In Section \ref{setup and goal}, we will study the basic properties of a slight generalization of $\pshenvelope(\metr)$ 
which is called the $\theta$-psh envelope  for a closed $(1,1)$-form $\theta$ on $X$. 
For the sake of simplicity, we will restrict our attention in the introduction to the semipositive envelope $\pshenvelope(\metr)$, 
while all the results hold more generally for the $\theta$-psh envelope assuming that the de Rham class of $\theta$ is ample.

In Section \ref{section3}, we will look at continuity of the semipositive envelope in the case of a smooth projective curve $X$ over an
{arbitrary complete} non-archimedean field $K$. 
Potential theory on the curve $\Xan$ was developed in Thuillier's thesis \cite{thuillier05:_theor}.  
We will use Thuillier's results and the slope formula of Katz, Rabinoff, and Zureick-Brown \cite[Thm.~2.6]{katz-rabinoff-zureick} to prove:

\begin{theo} \label{continuity of semipositive envelope}
Let $L$ be an ample line bundle on a smooth projective curve $X$ over any non-archimedean field $K$. 
Then $\pshenvelope(\metr)$ is a continuous semipositive metric on $\Lan$ for any continuous metric $\metr$ on $\Lan$.
\end{theo}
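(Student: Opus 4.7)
The plan is to translate the envelope problem into Thuillier's potential theory on the curve. Choose a reference semipositive model metric $\metr_0$ on $\Lan$ (available because $L$ is ample, after possibly replacing $L$ by a positive tensor power) and write any continuous metric as $\metr = \metr_0 e^{-f}$ for $f \in C(\Xan)$. Set $\theta := c_1(L,\metr_0)$. By Thuillier's thesis, a continuous metric $\metr_0 e^{-g}$ is semipositive in the sense of uniform limits of nef model metrics if and only if $g$ is a continuous $\theta$-subharmonic function, i.e.\ $\theta + dd^c g \geq 0$ as a Radon measure on $\Xan$. Under this dictionary, $\pshenvelope(\metr)$ corresponds to the model-function envelope
\begin{equation*}
P_\theta(f) = \sup\bigl\{ g \leq f \;\big|\; g \text{ is a model function with } \theta + dd^c g \geq 0 \bigr\},
\end{equation*}
and the claim becomes that $P_\theta(f)$ is continuous and $\theta$-psh for every $f \in C(\Xan)$.

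A direct comparison, adding a constant to a candidate $g$, yields the Lipschitz bound $\|P_\theta(f_1) - P_\theta(f_2)\|_\infty \leq \|f_1-f_2\|_\infty$. Since model functions are uniformly dense in $C(\Xan)$ for any non-archimedean projective curve (the skeleton $\Sigma$ of any semistable formal model is a proper deformation retract of $\Xan$ and carries the relevant piecewise-linear structure, cf.\ Gubler's density theorem), it suffices to verify continuity of $P_\theta(f)$ when $f$ is itself a model function. After a finite base extension (and Galois descent back to $K$), we may enlarge the skeleton so that both $f$ and the datum of $\theta$ are piecewise linear on $\Sigma$. The slope formula of Katz, Rabinoff and Zureick-Brown translates $\theta$-subharmonicity of a PL model function $g$ into a purely combinatorial condition on $\Sigma$: concavity of $g$ along edges, together with the inequality $\sum_v \partial_\tau g(v) + \theta(\{v\}) \leq 0$ at each vertex, where $\partial_\tau g(v)$ runs over outgoing slopes along tangent directions $\tau$ at $v$. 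In particular, such a $g$ which is moreover harmonic off $\Sigma$ is completely determined by its restriction to $\Sigma$.

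Using these translations, the computation of $P_\theta(f)$ reduces to an obstacle problem on the finite metric graph $\Sigma$: find the largest continuous PL function $g_\Sigma \leq f|_\Sigma$ satisfying the combinatorial $\theta$-psh conditions at every vertex. This is a classical convex-analytic obstacle problem on a graph, whose solution is continuous and again PL. Harmonic extension off $\Sigma$ (harmonic in the sense of Thuillier, which amounts to affine interpolation on open annuli via the retraction $\Xan \to \Sigma$) produces a continuous $\theta$-psh function $g$ on $\Xan$ with $g \leq f$, which clearly dominates every $\theta$-psh model function below $f$; hence $g = P_\theta(f)$. Finally, Thuillier's monotone regularization theorem on curves allows us to approximate $g$ from above by semipositive model metrics, so $P_\theta(f)$ is actually realized by the infimum in the definition of $\pshenvelope(\metr)$, proving that $\pshenvelope(\metr)$ is both continuous and semipositive. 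The main technical obstacle is the compatibility assertion tying the global envelope over all semipositive model metrics to the combinatorial envelope on $\Sigma$; it is here that Thuillier's regularization and the slope formula of Katz, Rabinoff and Zureick-Brown are essential.
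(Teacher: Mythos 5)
Your overall route is the one the paper takes: reduce to model functions by the Lipschitz estimate and density, pass to a finite extension with a strictly semistable model so that $\theta$ and $f$ live on a skeleton, use the Katz--Rabinoff--Zureick-Brown slope formula to turn $\theta$-plurisubharmonicity into a combinatorial condition, solve the resulting obstacle problem on the metric graph, and descend. The genuine gap is the sentence asserting that the function $g$ obtained by extending the skeletal envelope via the retraction ``clearly dominates every $\theta$-psh model function below $f$.'' An arbitrary $\theta$-psh model function $\varphi\leq f$ is determined on some much finer model, not on the fixed skeleton $\Sigma$, so to compare it with your combinatorial envelope you must prove two things: that $\varphi\leq\varphi\circ\tau$ for the retraction $\tau\colon\Xan\to\Sigma$, and that $\varphi\circ\tau$ is again $\theta$-psh (so that $\varphi|_\Sigma$ is a legitimate competitor in the obstacle problem). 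Neither is formal. The paper isolates this as Proposition \ref{lemma psh retraction curve}; its proof needs the approximation of $\theta$-psh model functions by functions $\frac1m\log|\mathfrak a|$ for vertical ideals on $\Xcal$, Berkovich's comparison theorem to get $\varphi\leq\varphi\circ\tau$, and a careful slope bookkeeping on the larger skeleton to get the psh-ness of $\varphi\circ\tau$. This step is exactly what fails in dimension $\geq 2$ (the Appendix gives a toric counterexample), so it cannot be dismissed as clear; it is the mathematical heart of the theorem and your proposal only names it as ``the main technical obstacle'' without an argument.

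Two smaller points. First, your combinatorial psh condition (concavity along edges together with $\sum_\tau\partial_\tau g(v)+\theta(\{v\})\leq 0$) has the opposite sign to the correct one: with $\theta+dd^cg\geq 0$ the slope formula gives convexity along edges and $\sum_\nu w_x(\nu)\lambda_{x,\nu}(g)+\deg(\theta|_{\Ccal_x})\geq 0$ at each point, as in Proposition \ref{lemma slope psh}. Second, the claim that the obstacle problem on $\Sigma$ has a continuous piecewise linear solution, and that this solution is actually the supremum of the admissible \emph{model} functions (whose restrictions must be integral $\Gamma$-affine up to a multiple, a nontrivial constraint when the value group $\Gamma$ is not contained in $\Q$), is where the paper spends Steps 3--5 of Proposition \ref{prop psh envelope model curve}: one linearizes competitors edge by edge, perturbs slopes rationally to stay inside the class of model functions, and in the rational case runs a polyhedral optimization to exhibit the envelope as an actual model function. ``Classical convex-analytic obstacle problem'' is a fair description of the flavor, but as written it does not establish that the supremum over the restricted class of competitors is attained by, or even uniformly approximated by, admissible functions.
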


A slightly more general version will be proved in Theorem \ref{thm-continuity-theta-envelope-curves}. 
The following is important in the proof: 
Let $L$ be any line bundle on the smooth projective curve $X$. 
We assume that $X$ has a strictly semistable model $\Xcal$ such that  $L^\an$ has a model metric $\metr_0$ associated to a line bundle on $\Xcal$. 
For any metric $\metr$ on $\Lan$, we consider the function $\varphi \coloneqq - \log(\metr/\metr_0)$. 
Let $p_\Xcal \colon \Xan \to \Delta$ be the canonical retraction to the skeleton $\Delta$ associated to $\Xcal$. Then
\begin{equation} \label{retraction and semipositivity}
 \metr_{\Delta} \coloneqq e^{-\varphi \circ p_\Xcal} \metr_0
\end{equation}
is a metric on $\Lan$ which does not depend on the choice of $\metr_0$. 
The following result is crucial in the proof of Theorem \ref{continuity of semipositive envelope}:

\begin{prop} \label{restriction of the metric to skeleton}
	Using the hypotheses above, we consider a model metric $\metr$ of $L^\an$. Then we have the following properties:
\begin{itemize}
	\item[(i)] {The metric} $\metr_\Delta$ is a model metric.
	\item[(ii)] {There is an equality of measures}
	$c_1(L, \metr_\Delta) = (p_\Xcal)_*(c_1(L,\metr)).$
	\item[(iii)] If $\metr$ is semipositive, then $\metr_\Delta$ is semipositive and $\metr_\Delta \leq \metr$.
\end{itemize}
\end{prop}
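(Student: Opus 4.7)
The plan is to first observe that $\metr_\Delta$ is independent of the choice of reference $\metr_0$, and then prove (i), (ii), (iii) in order. For independence of $\metr_0$: if $\metr_0'$ is a second model metric on $\Lan$ associated to a line bundle on $\Xcal$, then $\log(\metr_0'/\metr_0)$ is the model function of a vertical $\Q$-Cartier divisor on $\Xcal$ and therefore factors through $p_\Xcal$; a direct computation then shows that the resulting metric $\metr_\Delta$ is the same for both choices.

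For (i), I would pass to a strictly semistable refinement $\Xcal'$ of $\Xcal$ on which $\metr$ is induced by a $\Q$-line bundle, so that $\varphi = -\log(\metr/\metr_0)$ is a piecewise linear (PL) model function on $\Xan$ and its restriction to the skeleton is a PL function on $\Delta$. The key input is the standard bijection, for strictly semistable models of curves, between PL functions on $\Delta$ and vertical $\Q$-Cartier divisors on $\Xcal$; this bijection produces a vertical $\Q$-divisor on $\Xcal$ whose associated model function on $\Xan$ is precisely $\varphi \circ p_\Xcal$. Hence $\metr_\Delta = e^{-\varphi \circ p_\Xcal} \metr_0$ is already a model metric on the model $\Xcal$ itself.

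For (ii), I would use the decompositions $c_1(L,\metr) = c_1(L,\metr_0) + dd^c \varphi$ and $c_1(L,\metr_\Delta) = c_1(L,\metr_0) + dd^c(\varphi \circ p_\Xcal)$ and reduce the identity to two statements: (a) $(p_\Xcal)_* c_1(L,\metr_0) = c_1(L,\metr_0)$, which holds because the Chambert-Loir measure of $\metr_0$ is a finite combination of Diracs at vertices of $\Delta$, all fixed by $p_\Xcal$; and (b) $(p_\Xcal)_* dd^c \varphi = dd^c(\varphi \circ p_\Xcal)$. Statement (b), which I expect to be the main obstacle, reduces to the following combinatorial Laplacian identity on metric graphs: if $\Delta$ is a subgraph of a metric graph $\Delta'$ with retraction $\pi \colon \Delta' \to \Delta$ and $\tilde\varphi$ is PL on $\Delta'$, then $\pi_*(dd^c \tilde\varphi) = dd^c(\tilde\varphi|_\Delta)$. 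This follows from the balance condition: on any exceptional subtree $T$ of $\Delta'$ attached to $\Delta$ only at a vertex $v_0$, telescoping cancels the contributions of interior edges of $T$, so that the sum of $dd^c\tilde\varphi$-masses on $T$ reduces to the outgoing slopes of $\tilde\varphi$ at $v_0$ along $\Delta$-edges, i.e.\ to $dd^c(\tilde\varphi|_\Delta)(v_0)$.

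For (iii), the inequality $\metr_\Delta \leq \metr$ is equivalent to $\varphi \leq \varphi \circ p_\Xcal$. Because $\metr$ is semipositive and $c_1(L,\metr_0)$ is supported on $\Delta$, the function $\varphi$ satisfies $dd^c\varphi \geq 0$ on $\Xan \setminus \Delta$, hence is subharmonic in Thuillier's sense on each connected component of $\Xan \setminus \Delta$. Each such component is an open disk whose unique limit point in $\Xan$ is $p_\Xcal(x) \in \Delta$, so Thuillier's maximum principle yields $\varphi(x) \leq \varphi(p_\Xcal(x))$. Finally, semipositivity of $\metr_\Delta$ follows from (ii): $(p_\Xcal)_* c_1(L,\metr)$ is the pushforward of a positive Radon measure and therefore positive, and for a model metric on a smooth projective curve, positivity of the Chambert-Loir measure is equivalent to nefness of the underlying model line bundle, since the mass at each vertex $v \in \Delta$ equals the degree of the line bundle on the corresponding component of $\Xcal_s$ and these components are the only complete curves in $\Xcal$.
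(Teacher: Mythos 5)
Your proposal is correct, and for part (i) it is essentially the paper's argument: this is Proposition \ref{model functions and skeleton}(a) and (c). (Two small points there: the nontrivial input is that the restriction of a model function to $S(\Xcal)$ is not merely piecewise linear but \emph{integral $\Gamma$-affine}, which is exactly what makes your ``standard bijection'' with vertical $\Q$-divisors applicable; and the resulting divisor in general lives on a strictly semistable refinement of $\Xcal$ with the same skeleton, not on $\Xcal$ itself.) Where you genuinely diverge is in (ii) and (iii). The paper obtains (iii) from Proposition \ref{lemma psh retraction curve}: the inequality $\varphi\le\varphi\circ\tau$ is proved there by approximating $\varphi$ uniformly by functions $\frac{1}{m}\log|\mathfrak a|$ and invoking Berkovich's retraction inequality, and semipositivity of $\varphi\circ\tau$ is checked pointwise on $\Delta$ via the slope criterion of Proposition \ref{lemma slope psh}, using that the outgoing slopes of $\varphi$ in the exceptional directions at a point of $\Delta$ are $\le 0$. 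You instead derive the inequality from Thuillier's maximum principle on the connected components of $\Xan\setminus\Delta$ --- legitimate, though you should reduce to $\C_K$ (or otherwise justify) before asserting these components are open discs with a single boundary point --- and you deduce semipositivity of $\metr_\Delta$ from (ii) together with the equivalence, on a curve, between positivity of the Chambert--Loir measure and nefness of the model line bundle. Your explicit telescoping proof of (ii) is a useful addition: the paper refers (ii) to Propositions \ref{model functions and skeleton} and \ref{lemma psh retraction curve} without spelling it out, and your balancing argument on the exceptional subtrees is the right way to make it precise; it is in effect the summed-over-the-fiber version of the pointwise slope comparison the paper uses for (iii). Both routes ultimately rest on the same key input, the slope formula of Katz, Rabinoff, and Zureick-Brown.
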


 This will be proven in Propositions \ref{model functions and skeleton} and \ref{lemma psh retraction curve}.   
It would make pluripotential theory and the solution of the non-archimedean Calabi--Yau problem much easier 
if Proposition \ref{restriction of the metric to skeleton} would also hold in higher dimensions as we could work more combinatorically on skeletons. 
It is still true that  $\metr_\Delta$ is a model metric {which satisfies} $\metr_\Delta\leq \metr$.  
Burgos and Sombra show in a two dimensional toric counterexample 
in the Appendix that $\metr_\Delta$  does not have to be semipositive. 
 
 We show now that Proposition \ref{restriction of the metric to skeleton} is also crucial for the existence of the solution 
of the non-archimedean Calabi--Yau problem in the case of curves arguing as in \cite[\S 9]{BFJ-simons}. 
By Thuillier \cite[Cor.~3.4.13]{thuillier05:_theor}, there is a semipositive metric $\metr$ solving \eqref{MA equation}, but it might be singular. 
Here, semipositive means that the metric is an increasing pointwise limit of semipositive model metrics of the ample line bundle $L$. 
 If we assume that the positive Radon measure $\mu$ has support in the skeleton $\Delta$ of a strictly semistable model $\Xcal$ of $X$, 
 then it follows easily from Proposition \ref{restriction of the metric to skeleton} 
that $\metr_\Delta$ is a continuous semipositive metric solving \eqref{MA equation}. 
 Burgos and Sombra show in their counterexample in the Appendix that this does not hold in higher dimensions either. 

To look for solutions of the higher dimensional non-archimedean Monge--Amp\`ere equation in positive characteristic, 
we will replace the use of multiplier ideals by the use of test ideals. 
Test ideals were introduced by Hara and Yoshida  {\cite{hara-yoshida2003}} using a generalization of tight closure theory.
In Section \ref{app-test-ideals}, we will gather the necessary facts about test ideals mainly following \cite{mustata13} and 
so we work on a smooth variety $X$ over a perfect field $k$ of characteristic $p>0$. 
Similarly as in the case of multiplier ideals, one can define an 
\emph{asymptotic test ideal $\tau(\lambda\|D\|)$ of exponent  $\lambda \in \R_{\geq 0}$} for a divisor $D$ on $X$. 
Crucial for us is that $\tau(\lambda\|D\|)$ satisfies a subadditivity property and the following \emph{uniform generation property}:

\begin{theo} \label{intro: uniform gen prop}
Let $X$ be a projective scheme over a finitely generated $k$-algebra $R$ such that $X$ is a smooth $n$-dimensional variety over $k$. 
We assume that $H$ is an ample and basepoint-free divisor, $D$ is a divisor with $h^0(X, \Ocal(mD)) \neq 0$ for some $m \in \N_{>0}$ and  
$E$ is a divisor   such that the $\Q$-divisor $D- \lambda E$ is nef for some $\lambda \in \Q_{\geq 0}$. 
Then the sheaf $\Ocal_X(K_{X/k}+E+dH) \otimes_{\Ocal_X} \tau(\lambda \cdot \|D\|)$ 
is globally generated for all $d \geq n+1$.
\end{theo}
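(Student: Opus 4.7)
The plan is to follow the standard template for Fujita-type uniform generation statements (compare \cite{mustata13}), reducing global generation to a cohomology vanishing via Mumford's criterion and then invoking a Nadel-type vanishing for asymptotic test ideals. In characteristic zero this is the classical strategy of Ein--Lazarsfeld--Mustata for multiplier ideals; in our positive characteristic setting we replace Kodaira-type vanishing by a Frobenius-theoretic substitute.

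First, I would invoke Mumford's Castelnuovo--Mumford regularity criterion: since $H$ is ample and basepoint-free on $X$, to show that the coherent sheaf $\Fcal := \Ocal_X(K_{X/k}+E)\otimes \tau(\lambda\|D\|)$ twisted by $dH$ is globally generated for $d\ge n+1$, it suffices to verify
$$ H^i\bigl(X,\,\Ocal_X(K_{X/k}+E+(d-i)H)\otimes \tau(\lambda\|D\|)\bigr)=0 \quad (1\le i\le n). $$
For $d\ge n+1$ the twist $(d-i)H$ is an ample Cartier divisor, providing the positivity that will be fed into the vanishing theorem.

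Next, I would apply a Nadel-type vanishing for asymptotic test ideals in positive characteristic. The form needed states that if $M$ is a Cartier divisor for which $M-\lambda D$ can be written as a nef divisor (produced from the hypothesis that $D-\lambda E$ is nef, together with all but one of the available copies of $H$) plus an ample divisor (the final copy of $H$), then
$$ H^i\bigl(X,\,\Ocal_X(K_{X/k}+M)\otimes \tau(\lambda\|D\|)\bigr)=0 \quad (i\ge 1). $$
On a smooth variety over the perfect field $k$ of characteristic $p>0$, such a statement is proved by realizing $\tau(\lambda\|D\|)$ as the image of a suitable Frobenius pushforward through the Cartier trace $F^e_*\Ocal_X(K_{X/k})\twoheadrightarrow \Ocal_X(K_{X/k})$, and then applying Serre vanishing to the Frobenius-amplified twist. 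Since Frobenius multiplies intersection numbers by $p^e$, an ample ingredient becomes arbitrarily positive as $e\to\infty$, and the vanishing descends through the trace to the asymptotic test ideal.

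The main obstacle is to formulate and prove this Nadel-type vanishing in the required generality. One must first develop the asymptotic test ideal with enough functorial properties---subadditivity, compatibility with Frobenius, and a description of $\tau(\lambda\|D\|)$ in terms of general members of $|mD|$ for sufficiently divisible $m$---so that the Frobenius--Serre argument runs uniformly in $e$. One must then verify that the nef hypothesis on $D-\lambda E$, combined with the splitting of the twist $dH$ as $(d-i)H$ plus a free copy of $H$, really does suffice in place of an outright ample hypothesis. This technical groundwork is the content of Section \ref{app-test-ideals}, after which the Castelnuovo--Mumford reduction of the first step immediately yields the stated global generation.
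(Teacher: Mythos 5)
Your opening reduction is where the argument goes off the rails. Applying Mumford's criterion directly to $\Fcal:=\Ocal_X(K_{X/k}+E)\otimes\tau(\lambda\cdot\|D\|)$ leaves you needing $H^i\bigl(X,\Fcal\otimes\Ocal_X((d-i)H)\bigr)=0$ for $1\le i\le n$, i.e.\ exactly the ``Nadel-type vanishing for asymptotic test ideals'' of your second step --- and no such theorem exists in characteristic $p$. Already for $D=E=0$ and $\lambda=0$ (so $\tau(\lambda\cdot\|D\|)=\Ocal_X$) it specializes to Kodaira-type vanishing $H^i(X,\Ocal_X(K_{X/k}+(d-i)H))=0$, which fails in positive characteristic by Raynaud's examples. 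Nor does your proposed proof of it work: for a surjection of coherent sheaves $\Gcal\twoheadrightarrow\Fcal$ with kernel $\Kcal$, vanishing of $H^i(X,\Gcal)$ does not give vanishing of $H^i(X,\Fcal)$ --- the cokernel of $H^i(X,\Gcal)\to H^i(X,\Fcal)$ only injects into $H^{i+1}(X,\Kcal)$ --- and the trace surjection ${\rm Tr}^e\colon F^e_*\bigl(\fa_m^{\lceil\lambda p^e/m\rceil}\cdot\omega_{X/k}\bigr)\twoheadrightarrow\tau(\lambda\cdot\|D\|)\cdot\omega_{X/k}$ is not split and has an uncontrolled kernel varying with $e$. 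So vanishing does not ``descend through the trace.''

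What does descend through the trace is \emph{global generation}, since global generation passes to quotients and to sums of images; Mustaţă's argument, which the paper follows verbatim, is organized around exactly this point. One exhibits $\tau(\lambda\cdot\|D\|)\otimes\Ocal_X(K_{X/k}+E+dH)$ as a sum of images of sheaves of the form $F^e_*\,\Ocal_X\bigl(K_{X/k}-T_J+p^e(E+dH)\bigr)$, with $T_J$ built from divisors of generating sections of $\Ocal_X(mD)$, and applies Mumford's criterion to \emph{these} Frobenius pushforwards. Via the projection formula the cohomology groups to be killed then involve only genuine line bundles twisted by $p^e(d-i)H$ plus a nef divisor depending on $e$; since both the auxiliary divisor and the nef part vary with $e$, Serre vanishing (whose threshold depends on the fixed sheaf) is useless here, and one needs Fujita's vanishing theorem, applied to the finitely many sheaves $\Fcal_j=\Ocal_X(K_{X/k}+T_j)$ and the ample divisor $(d-i)H$, precisely because its threshold is uniform over all nef twists. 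Finally, your proposal never engages with the actual new content of the theorem relative to \cite[Thm.~4.1]{mustata13}, namely that $X$ is projective only over the finitely generated $k$-algebra $R$ rather than over $k$: the paper's entire proof consists of the two modifications this forces, replacing Mumford's regularity theorem by its version over a Noetherian base \cite[20.4.13]{brodmann-sharp} and Fujita's vanishing theorem by Keeler's relative generalization \cite[Thm.~1.5]{keeler03}.
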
 

This was proven by Mustaţă if $X$ is projective over $k$. 
As we will later work over discrete valuation rings, we need the more general version with $X$ only projective over $R$. 
This will be possible in Theorem \ref{uniform generation} 
as we can replace the use of Fujita's vanishing theorem in Mustaţă's proof by Keeler's generalization. 

Now we come to the \emph{non-archimedean Calabi--Yau problem in equicharacteristic} $p>0$. 
For the remaining part of the introduction, we now fix  an $n$-dimensional smooth projective variety $X$ 
over a complete discretely valued field $K$ of characteristic $p$. 
To apply the results on test ideals, we have to require that $X$ is of geometric origin from a $d$-dimensional family over a perfect field $k$. 
We also fix an ample line bundle $L$ on $X$.

\begin{theo} \label{continuity of semipositive envelope2}
Under the hypotheses above, we assume that resolution of singularities holds over $k$ in dimension $d+n$. 
Then the semipositive envelope $\pshenvelope(\metr)$ of a continuous metric $\metr$ on $\Lan$ is a continuous semipositive metric on $\Lan$.
\end{theo}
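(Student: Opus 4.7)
The plan is to mimic the Boucksom–Favre–Jonsson argument from \cite[\S 8]{BFJ1}, substituting asymptotic test ideals for asymptotic multiplier ideals, and invoking the uniform generation statement (Theorem \ref{intro: uniform gen prop}) in place of Kodaira-type vanishing. Since continuous metrics on $\Lan$ are uniform limits of model metrics and the map $\metr \mapsto \pshenvelope(\metr)$ is $1$-Lipschitz in the sup-norm, it suffices to prove continuity of $\pshenvelope(\metr)$ when $\metr = e^{-\varphi}\metr_0$ is itself a model metric, with $\metr_0$ a fixed semipositive reference metric and $\varphi$ a model function attached to a $\Q$-Cartier divisor $D$ on a projective model of $X$ over $\Ko$. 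The geometric-origin hypothesis spreads the data out over a smooth affine open $\Spec R \subset B$ around $b$; combined with resolution of singularities in dimension $d+n$, this produces a smooth $(d+n)$-dimensional scheme $\Xcal$ projective over $R$ together with an extension $\mathcal{L}$ of $L$, whose $b$-adic completion recovers the original model.

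For each $m \ge 1$, consider the asymptotic test ideal $\tau(\|mD\|) \subset \Ocal_\Xcal$ and the Fubini–Study type model function $\varphi_m$ built from the finite-dimensional space of sections of $m\mathcal{L} \otimes \tau(\|mD\|)$ with appropriately controlled sup-norm. By the defining property of test ideals together with Frobenius-stability, each $\varphi_m$ is semipositive and satisfies $\varphi_m \le \pshenvelope(\varphi)$. Subadditivity of the asymptotic test ideal, $\tau(\|(m+m')D\|) \supset \tau(\|mD\|) \cdot \tau(\|m'D\|)$, combined with a Fekete-type argument on the product $s \otimes s'$ of sections, shows that $m\varphi_m$ is essentially superadditive, so the $\varphi_m$ converge pointwise from below to some continuous envelope. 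The quantitative input comes from Theorem \ref{intro: uniform gen prop}: applied with an auxiliary basepoint-free ample divisor $H$ on $\Xcal$ and with $E$ a suitable integer rounding of $mD$, it produces enough global sections of a small twist of $m\mathcal{L} \otimes \tau(\|mD\|)$ to establish the reverse estimate $\pshenvelope(\varphi) \le \varphi_m + C/m$ for a constant $C$ independent of $m$. Uniform convergence $\varphi_m \to \pshenvelope(\varphi)$ then gives continuity of the envelope, and semipositivity follows because the limit is a uniform limit of semipositive model functions.

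The main obstacle will be the passage between the algebraic picture on $\Xcal/R$ and the non-archimedean envelope on $\Xan/K$. One has to verify that the test ideal $\tau(\|mD\|)$ computed via Frobenius on the $F$-finite smooth $R$-scheme $\Xcal$ is compatible with $b$-adic completion, so that the sections counted on $\Xcal$ actually control the envelope after base change to $K$, and that the rounding operation $D \mapsto E$ used to feed into the uniform generation theorem does not destroy the nefness of $D-\lambda E$ needed there. This is precisely why Theorem \ref{intro: uniform gen prop} was formulated over a general finitely generated $k$-algebra $R$: only this relative form, obtained from Musta\c{t}\u{a}'s absolute statement by replacing Fujita vanishing with Keeler's generalization, applies in the non-archimedean setup. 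Once this compatibility is in place, the proof is formally parallel to the characteristic-zero argument of \cite[\S 8]{BFJ1}.
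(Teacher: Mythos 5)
Your core argument --- base ideals sandwiched by asymptotic test ideals, subadditivity, and the relative uniform generation theorem feeding into the Boucksom--Favre--Jonsson scheme --- is exactly what the paper does in Section \ref{section5} (Theorem \ref{relative Theorem 8.5}), and your worry about completion is handled there more simply than you fear: the test ideals are only ever computed on the smooth $k$-variety $\Xcal_B$, and one transports them to the $\kcirc$-model by flat pullback of the base ideals, so no statement about test ideals on the completed scheme is needed. But there are two genuine gaps in your reduction to that situation. First, resolution of singularities in the weak form assumed here produces a regular model $\Xcal_R'$ dominating the given one, but it does \emph{not} give you an ample extension of (a power of) $L$ to $\Xcal_R'$, and the uniform generation theorem needs the ample $\Acal_B$ of Assumption \ref{Geometric assumption}. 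The paper closes this in Lemma \ref{higher geometric model} using L\"utkebohmert's blowup lemma together with P\'epin's theorem on semi-factorial models, so that the correction line bundle $F$ coming from the relative $\Ocal(1)$ of the resolution can be extended and cancelled; your sketch silently assumes this extension exists.

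Second, the hypothesis is only that $X$ is of geometric origin over $k$ --- neither $L$, nor the model, nor the metric is assumed to descend to $k(B)$. Your phrase ``this produces \dots an extension $\mathcal{L}$ of $L$'' is therefore unjustified as stated. The paper needs three further ingredients to bridge this: Proposition \ref{models are geometric}, showing every $\kcirc$-model and every model function descends to $R=\Ocal_{B,b}$; the $dd^c$-lemma, to replace a form whose de Rham class is geometric by one that is itself geometric (Lemma \ref{cor continuity function field}); and the finite normal extension trick of \cite[Lemma A.7]{BFJ2}, making $N^1(Y/F)_\Q\to N^1(X/K)_\Q$ surjective so that $\{\theta\}$ becomes geometric, combined with Proposition \ref{lemma A4 BFJ2} to descend continuity of the envelope along the extension. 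In characteristic $p$ the latter requires the purely inseparable descent of model functions and semipositivity (Lemma \ref{lemma purely inseparable}), which has no analogue in the characteristic-zero argument you are modelling the proof on. Without these steps the argument only proves the theorem for metrics and forms that are themselves of geometric origin, which is strictly weaker than the statement.
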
 

For the precise definition about resolution of singularities, we refer to Definition \ref{resolution of singularities}. 
As resolution of singularities is known in dimension $3$ over a perfect field 
by a result of Cossart and Piltant \cite[Thm.~p.~1839]{cossart-piltant2009}, 
Theorem \ref{continuity of semipositive envelope2} is unconditional 
if $X$ is a smooth projective surface of geometric origin from a $1$-dimensional family over $k$.

In Section \ref{section5}, we will prove Theorem \ref{continuity of semipositive envelope2} 
in the case when $\metr$ is a model metric associated to a model which is also defined geometrically over $k$. 
We will follow the proof of Boucksom, Favre, and Jonsson, replacing multiplier ideals by test ideals. 
As we use a rather weak notion of resolution of singularities, a rather subtle point in the argument is necessary 
in the proof of Lemma \ref{higher geometric model} which involves a result of P\'epin about semi-factorial models.  
Theorem \ref{continuity of semipositive envelope2} will be proved in full generality in Section \ref{section6} 
using the $dd^c$-lemma and basic properties of the semipositive envelope. 
In fact, we will prove in Theorem \ref{thm-continuity-theta-envelope} a slightly more general result.

If we use additionally that embedded resolution of singularities (see Definition \ref{embedded resolution of singularities}) 
holds over $k$ in dimension $d+n$, then the family of projective SNC-models will be cofinal in the category of all models of $X$. 
We will see in Section \ref{section7} that this and Theorem \ref{continuity of semipositive envelope2} allow us 
to set up the pluripotential theory from \cite{BFJ1} on $\Xan$. 
By \cite[Thm.~7.2]{BFJ2} again, the continuity of $\pshenvelope(\metr)$ yields 
that the orthogonality property holds for any continuous metric on $\Lan$. 
We will use this in Section \ref{section7} to show that the variational method of Boucksom, Favre and Jonsson 
proves the following result (see Theorem \ref{MA problem}). 

\begin{theo} \label{Calabi-Yau in equichar p}
Let $X$ be an $n$-dimensional smooth projective variety of geometric origin from a $d$-dimensional family over a perfect field $k$ of characteristic $p>0$. 
We assume that resolution of singularities and embedded resolution of singularities hold over $k$ in dimension $d+n$. 
Let $L$ be an ample line bundle on $X$ and 
let $\mu$ be a positive Radon measure supported on the skeleton of a projective SNC-model of $X$ with $\mu(\Xan)=\deg_L(X)$. 
Then the non-archimedean Monge--Amp\`ere equation \eqref{MA equation} has a continuous semipositive metric $\metr$ on $\Lan$ as a solution. 
\end{theo}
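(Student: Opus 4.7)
The plan is to follow the variational strategy of Boucksom--Favre--Jonsson, using the continuity of the semipositive envelope established in Theorem \ref{continuity of semipositive envelope2} as the essential analytic input in place of the characteristic zero methods.

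First, I would check that the foundational setup of \cite{BFJ1} is available under our hypotheses. The extra assumption of embedded resolution of singularities over $k$ in dimension $d+n$ is used precisely to ensure that projective SNC-models of $X$ form a cofinal system inside the category of all projective models, because every model can be dominated by a projective SNC-model obtained from an embedded resolution of its special fiber. This cofinality is what allows one to put the piecewise linear structure on the dual complex/skeleton and to define the spaces of model functions, $\theta$-psh functions, their Monge--Amp\`ere energy, capacity and associated measures exactly as in \cite[\S5--\S7]{BFJ1}. I would go through the main constructions and verify that they depend on SNC-cofinality plus continuity of semipositive envelopes, which are both supplied under our hypotheses.

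Next I would invoke Theorem \ref{continuity of semipositive envelope2}, which is unconditional here thanks to the resolution assumption, to deduce that for every continuous metric $\metr$ on $L^\an$ the envelope $\pshenvelope(\metr)$ is a continuous semipositive metric. This is exactly the monotone regularization theorem \cite[Lemma 8.9]{BFJ1}, from which the whole pluripotential-theoretic apparatus of \cite{BFJ2} (Monge--Amp\`ere operator on bounded semipositive metrics, energy, capacity estimates) extends verbatim. From this continuity statement one then derives the orthogonality property for $\pshenvelope(\metr)$ by applying \cite[Thm.~6.3.3]{BGJKM} (whose proof avoids both (a) and (c) and uses only continuity of the envelope), so orthogonality holds for every continuous metric on $L^\an$.

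Finally I would run the variational argument from \cite{BFJ2}. Concretely: the assumption that $\mu$ is supported on the skeleton of a projective SNC-model and has total mass $\deg_L(X)$ places $\mu$ in the class of measures treated by the BFJ variational principle; one introduces the functional $F_\mu(\varphi) = E(\varphi) - \int \varphi \, d\mu$ on the space of $L$-psh metrics of finite energy, shows using the capacity/energy estimates that $F_\mu$ attains its supremum at some semipositive metric $\metr$ of finite energy, and deduces from the differentiability of $E$ together with the orthogonality property that $\metr$ solves $c_1(L,\metr)^{\wedge n} = \mu$. Regularity of this solution (namely continuity of $\metr$) then follows from the comparison/envelope techniques of \cite[\S 8]{BFJ2}, which again use only continuity of envelopes of continuous metrics. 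The main obstacle is the last regularity step: one must verify carefully that BFJ's passage from an a priori finite-energy solution to a continuous solution is characteristic-free and uses only those envelope continuity statements now available in our positive-characteristic setting, rather than the multiplier-ideal input that was specific to characteristic zero.
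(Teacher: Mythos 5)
Your proposal follows essentially the same route as the paper's proof in Section \ref{section7}: embedded resolution gives cofinality of projective SNC-models so the pluripotential theory of \cite{BFJ1} transfers, continuity of the envelope (Theorem \ref{thm-continuity-theta-envelope}) yields monotone regularization and, via \cite{BGJKM}, the orthogonality property, and the variational method of \cite{BFJ2} (differentiability of the energy plus Kolodziej-type estimates for continuity of the solution) then concludes. The only ingredient the paper spells out that you elide is the verification that its definition of the $\theta$-psh envelope agrees with that of \cite{BFJ1} in this setting, a technical point needed to quote the BFJ machinery verbatim.
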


Cossart and Piltant \cite{cossart-piltant2008, cossart-piltant2009} have shown resolution of singularities  and embedded resolution of singularities 
in dimension $3$ over a perfect field, hence Theorem \ref{Calabi-Yau in equichar p} holds unconditionally 
for a smooth projective surface $X$ of geometric origin from a $1$-dimensional family over the perfect field $k$.

\subsection*{Acknowledgement}
We  thank Mattias Jonsson for his valuable comments on a first version, 
Matthias Nickel for helpful discussions, and 
the referee for his useful remarks.

\addtocontents{toc}{\protect\setcounter{tocdepth}{0}}
\section*{Notations and conventions}
\addtocontents{toc}{\protect\setcounter{tocdepth}{1}}

Let $X$ be a scheme. 
An \emph{ideal} in $\KO_X$ is a quasi-coherent
ideal sheaf in $\KO_X$.
A \emph{divisor} on $X$ is always a Cartier divisor on $X$.
{Given $m\in \N$ we write 
$X^{(m)}$ for the set of all $p\in X$ where the local ring
$\Ocal_{X,p}$ has Krull dimension $m$.}
Let $k$ be a field. A \emph{variety $X$ over $k$} is an integral $k$-scheme $X$
which is separated and of finite type. 
A \emph{curve (resp. surface)} is a variety of dimension one 
(resp. two).

Throughout this paper $(K,\absval)$ denotes a complete
non-archimedean valued field with valuation ring $K^\circ$ and
residue field $\tilde K$.
Starting in Section \ref{section5} we will assume furthermore 
that the valuation is discrete
and that $K$ has positive characteristic $p>0$.
In this case there exists an isomorphism
$K^\circ \stackrel{\sim}{\longrightarrow}\tilde K[[T]]$
\cite[Thm.~29.7]{matsumura-book}.
Let $X$ be a $K$-variety.
We denote the analytification of $X$ 
in the sense of Berkovich \cite[Thm.~3.4.1]{berkovich-book} by $X^\an$.

\section{Model metrics, semipositive metrics, and envelopes} 
\label{setup and goal}\label{section2}

Let $X$ be a proper variety over a complete non-archimedean
valued field $(K,\absval)$.

\begin{art} \label{models}
A \emph{model of $X$} is given by a proper flat scheme $\KX$ 
over $S:={\rm Spec}\,K^\circ$
together with an isomorphism $h$ between 
$X$ and the generic fiber $\KX_\eta$ of the $S$-scheme $\KX$
which we read as an identification.
Given a model $\KX$ of $X$ 
there is a canonical surjective \emph{reduction map}
$\red\colon X^\an\longrightarrow \KX_s$ where
$\KX_s$ denotes the special fiber 
$\KX\otimes_{K^\circ} \tilde K$ of $\KX$ over $S$.

Let $L$ be a line bundle on the proper variety $X$.
A \emph{model of $(X,L)$}
or briefly a \emph{model of $L$} is given by a model
$(\KX,h)$ of $X$ together with a line bundle $\KL$ on
$\KX$ and an isomorphism $h'$ between $L$ and $h^*(\KL|_{\Xcal_\eta})$
which we read  as an identification.

Given a model $(\KX,\KL)$ of $(X,L^{\otimes m})$
for some $m\in \N_{>0}$
there is a unique metric $\metric_\KL$
on $L^\an$ over $X^\an$ which satisfies the following:
Given an open subset $\KU$ of $\KX$,
a frame $t$ of $\KL$ over $\KU$, and a section 
$s$ of $L$ over $U=X\cap \KU$ we write
$s^{\otimes m}=ht$ for some regular function $h$ on $U$
and get 
$\|s\|=\sqrt[m]{|h|}$ 
on $U^\an\cap \red^{-1}(\KU_s)$.
{Such a metric on $L^\an$  is called a 
\emph{model metric determined on $\KX$}.}
\end{art}

\begin{art}  \label{model functions}
A model metric $\metric$ on  $\KO_{X^\an}$ induces a
continuous function $f=-\log \|1\|\colon X^\an\to \R$.
The space of \emph{model functions}
\[
\modelD(X)=\{f\colon X^\an\rightarrow \R\,|\,
f=-\log \|1\| 
\mbox{ for some model metric }\metric \mbox{ on }\KO_{X^\an}\}
\]
has a natural structure of a $\Q$-vector space.
We say that a model function $f=-\log \|1\|$ is 
\emph{determined on a model $\KX$} if the model metric 
$\metric$ is determined on $\KX$.
A vertical divisor $D$ on $\KX$ determines a model
$\KO(D)$ of $\KO_X$ and an associated model function
$\varphi_D  \coloneqq -\log\|1\|_{\KO(D)}$. 
Such model functions are called \emph{$\Z$-model functions}. 
Let $\Da$ denote a vertical ideal of $\KX$.
Let $E$ denote the exeptional divisor of the blowup of $\KX$ in $\Da$.
Then $\log|\Da|:=\varphi_E$ is called the \emph{{$\Z$-}model function defined
by the vertical ideal $\Da$}.
\end{art}

\begin{art} \label{form curvature etc}
Consider a model  $\KX$ of the proper variety $X$ over $K$.
The  rational vector space space $N^1(\KX/S)_\Q$ 
is by definition the quotient of
$\Pic(\KX)_\Q \coloneqq  {\rm Pic}(\KX) \otimes_\Z \Q$ by the subspace 
generated by classes of line bundles $\KL$ such that $\KL\cdot C=0$ 
for each closed curve $C$ in the special fiber $\KX_s$. 
{Note that $N^1(\KX/S)_\Q$ is finite dimensional by applying \cite[Prop.~IV.1.4]{kleiman-1966}
to $\KX_s$.} 
We define
$N^1(\KX/S)\coloneqq N^1(\KX/S)_\Q\otimes_\Q\R$. 
An element $\alpha \in N^1(\KX/S)_\Q$ (resp.~$\alpha \in N^1(\KX/S))$ 
is called \emph{nef} if $\alpha \cdot C \geq 0$ 
for all closed curves $C$ in $\KX_s$.  
We call a line bundle $\KL$ on $\KX$ \emph{nef} if 
the class of $\KL$ in $N^1(\KX/S)$ is nef.
\end{art}

\begin{art}
We define $\Zcal^{1,1}(X)_\Q$ as the direct limit
\begin{align} \label{form definition}
\Zcal^{1,1}(X)_\Q  \coloneqq \varinjlim N^1(\KX / S)_\Q,
\end{align}
where $\KX$ runs over the isomorphism classes of models of $X$.
The \textit{space of closed $(1,1)$-forms on $X$} is defined as 
$\Zcal^{1,1}(X)  \coloneqq \Zcal^{1,1}(X)_\Q\otimes_\Q\R$.
Let $L$ be a line bundle on $X$.
Let $\metr$ be a model metric on $L^\an$ which is determined on $\KX$ by
a model $\KL$ of $L^{\otimes m}$.
We multiply the class of $\KL$ in $N^1(\KX/S)_\Q$ by $m^{-1}
$ which determines a well defined class
$c_1(L,\metr)\in \Zcal^{1,1}(X)_\Q\subseteq \Zcal^{1,1}(X)$ 
{called} the \textit{curvature form $c_1(L,\metr)$ of 
$(L, \metr)$}.
{We have a natural map $dd^c \colon \modelD(X) \to \Zcal^{1,1}(X); \;\; f \mapsto c_1(\Ocal_X, \metr_{\rm triv} \cdot e^{-f})$.}

A closed $(1,1)$-form $\theta$ is called {\it semipositive} if it is represented by a nef
element $\theta_\KX \in N^1(\KX/S)$ for some model $\KX$ of $X$. 
We say that a model metric $\metr$ on $L^\an$ for
a line bundle $L$ on $X$ is {\it semipositive} 
if the same holds for the curvature form 
$c_1(L,\metr)$. 
\end{art}

\begin{art}\label{max psh}
For $\theta \in \Zcal^{1,1}(X)$ we denote by
\[
\PSH_\modelD(X, \theta) = \{ f \in \modelD(X) \, \vert \, \theta + 
{dd^c f}
\in \Zcal^{1,1}(X) \text{ is semipositive} \}
\]
the set of \emph{$\theta$-plurisubharmonic} 
($\theta$-psh for short) model functions. 
Recall from  
\cite[Prop.~3.12]{gubler-martin} that
the set $\PSH_\modelD(X, \theta)$ is stable under the formation
of max.
\end{art}

\begin{art} \label{Neron Severi-1}
If  $Y$ is a proper variety over  an arbitrary field $k$, 
we denote by $N^1(Y)_\Q$ the rational vector space $\Pic(Y) \otimes \Q$ 
modulo numerical equivalence. 
Similarly, we denote by $N^1(Y)=N^1(Y)_\Q\otimes_\Q\R$ 
the real vector space 
$\Pic(Y) \otimes \R$ modulo numerical equivalence. 
A class in $N^1(Y)$ is called {\it ample} if it is an    
$\R_{>0}$-linear combination of classes induced by ample line bundles on $Y$.
{An element $\alpha \in N^1(Y)_\Q$ (resp.~$\alpha \in N^1(Y))$ 
is called \emph{nef} if $\alpha \cdot C \geq 0$ 
for all closed curves $C$ in $Y$. }
\end{art}

\begin{art} \label{Neron Severi-2}
The restriction maps 
$N^1(\KX/S) \rightarrow N^1(X),\,[\KL]\mapsto[\KL|_X]$ induce a linear map
$\{\phantom{a}\}: \Zcal^{1,1}(X) \longrightarrow  N^1(X), \, \theta \mapsto \{\theta \}$.
We call $\{\theta\}$ the \emph{de Rham class} of $\theta$.
\end{art}

\begin{defi} \label{definition envelope}
Let $X$ be a projective variety over $K$ and 
$\theta\in \Zcal^{1,1}(X)$ with de Rham class $\{\theta\}\in N^1(X)$.
The \emph{$\theta$-psh envelope {$P_\theta(u)$}} of 
{$u\in C^0(X^\an)$} {is the function}
\begin{equation}\label{def-psh-envelope}
{\pshenvelope_\theta(u) \colon X^\an\to \R\cup\{-\infty\},\,\,}
\pshenvelope_\theta(u)(x)=\sup \{
\varphi(x)\,|\,\varphi \in 
\PSH_\modelD(X,\theta)\wedge \varphi\leq u\}.
\end{equation}
\end{defi}

Note that $\pshenvelope_\theta(u)$ is a real valued function if and only if there exists a  $\theta$-psh model function. 
For the existence of a $\theta$-psh model function, 
it is necessary that the de Rham class $\{\theta\}$ is nef (see \cite[4.8]{gubler-martin} and \cite[Rem.~5.4]{BFJ1}). 
If $\{\theta\}$ is ample, then there exists always a $\theta$-psh model function and
hence $\pshenvelope_\theta(u)$ is a real valued function. 
If there is no $\theta$-psh function, then $\pshenvelope_\theta(u) \equiv - \infty$ by definition.
	
If the residue characteristic is zero and if the de Rham class $\{\theta\}$ is ample,
our definition of $\pshenvelope_\theta(u)$ is 
by \cite[Thm.~8.3 and Lemma 8.9]{BFJ1} equivalent
to the definition of Boucksom, Favre, and Jonsson
in \cite[Def.~8.1]{BFJ1} .

The next proposition collects elementary properties of envelopes.

\begin{prop}\label{regulari-prop2} 
Let $u,u'\in C^0(X^\an)$ and $\theta, \theta'\in \Zcal^{1,1}(X)$. 
\begin{enumerate}
\item
If $u\leq u'$ then $\pshenvelope_\theta(u)\leq \pshenvelope_\theta(u')$.
\item
We have
$\pshenvelope_{t\theta+(1-t)\theta'}(tu+(1-t)u')
\geq t\pshenvelope_\theta(u)+(1-t)\pshenvelope_{\theta'}(u')$
for all $t\in [0,1]$.
\item
We have
$\pshenvelope_\theta(u)+c=\pshenvelope_\theta(u+c)$ for each $c\in\R$.
\item
We have 
$\pshenvelope_\theta(u)-v=\pshenvelope_{\theta+dd^cv}(u-v)$ 
for each $v\in \modelD(X)$.
\item
{If $\pshenvelope_\theta(u) \not\equiv - \infty$, then we have} 
$\sup_{X^\an} |\pshenvelope_\theta(u)-\pshenvelope_\theta(u')|
\leq \sup_{X^\an} |u-u'|$.
\item
If $\theta$ is determined on a model $\KX$, 
{if the de Rham class $\{\theta\}$ is ample and if}  $\theta_m\to \theta$
in $N^1(\KX/S)$, then $P_{\theta_m}(u)\to \pshenvelope_\theta(u)$ uniformly 
on $X^\an$.
 \item 
We have $\pshenvelope_{t \theta}(tu)= t \pshenvelope_\theta(u)$
for all $t\in \R_{>0}$.
\item
{Assume $\pshenvelope_\theta(u) \not\equiv - \infty$. 
Then} 
the envelope $P_\theta(u)$ is continuous if and only if it is a uniform limit of $\theta$-psh model functions.
\end{enumerate}
\end{prop}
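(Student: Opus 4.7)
Parts (i), (iii), (iv), (v) are formal. (i) is direct monotonicity of suprema since the constraint $\varphi\leq u$ is stricter than $\varphi\leq u'$. (iii) uses that $\varphi\mapsto\varphi+c$ is a bijection on $\PSH_\modelD(X,\theta)$ (constants are model functions with $dd^c=0$) sending $\{\varphi\leq u\}$ to $\{\varphi+c\leq u+c\}$. (iv) uses the bijection $\varphi\mapsto\varphi+v$ between $\PSH_\modelD(X,\theta+dd^cv)$ and $\PSH_\modelD(X,\theta)$ respecting the analogous constraints. (v) follows by sandwiching $u$ between $u'\pm\sup|u-u'|$ and applying (i) and (iii).

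For (ii), take $\varphi\leq u$ in $\PSH_\modelD(X,\theta)$ and $\varphi'\leq u'$ in $\PSH_\modelD(X,\theta')$; for rational $t\in\Q\cap[0,1]$, the model function $t\varphi+(1-t)\varphi'$ lies in $\PSH_\modelD(X,t\theta+(1-t)\theta')$ by convexity of the semipositive cone in $\Zcal^{1,1}(X)$ and is bounded by $tu+(1-t)u'$, so taking suprema gives the claim; the case of real $t$ follows by rational approximation using (v) and (vi). For (vii), the bijection $\varphi\mapsto t\varphi$ on the $\Q$-vector space $\modelD(X)$ handles $t\in\Q_{>0}$, and the extension to real $t$ again uses (v) and (vi).

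The main obstacle is (vi). After replacing $\KX$ by a projective model dominating it, fix an ample class $\alpha\in N^1(\KX/S)$ small enough that $\{\theta\}-\{\alpha\}$ remains ample in $N^1(X)$; this is possible by openness of the ample cone. Since $N^1(\KX/S)$ is finite-dimensional and the nef cone contains $\alpha$ in its interior, the convergence $\theta_m\to\theta$ yields $\varepsilon_m\to 0$ with $\pm(\theta_m-\theta)+\varepsilon_m\alpha$ nef. For $\varphi\in\PSH_\modelD(X,\theta_m)$ with $\varphi\leq u$, the identity
\[
(\theta+\varepsilon_m\alpha)+dd^c\varphi=(\theta-\theta_m+\varepsilon_m\alpha)+(\theta_m+dd^c\varphi)
\]
exhibits the left-hand side as a sum of two semipositive classes, so $\varphi\in\PSH_\modelD(X,\theta+\varepsilon_m\alpha)$ and hence $P_{\theta_m}(u)\leq P_{\theta+\varepsilon_m\alpha}(u)$; symmetrically, $P_\theta(u)\leq P_{\theta_m+\varepsilon_m\alpha}(u)\leq P_{\theta+2\varepsilon_m\alpha}(u)$. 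The task therefore reduces to proving $P_{\theta+\varepsilon\alpha}(u)\to P_\theta(u)$ uniformly as $\varepsilon\downarrow 0$. The lower bound is immediate from $\PSH_\modelD(X,\theta)\subseteq\PSH_\modelD(X,\theta+\varepsilon\alpha)$. The upper bound comes from applying (ii) to the convex decomposition $\theta=(1-\varepsilon)(\theta+\varepsilon\alpha)+\varepsilon(\theta-(1-\varepsilon)\alpha)$: since the de Rham class $\{\theta-(1-\varepsilon)\alpha\}$ is a convex combination of the ample classes $\{\theta\}$ and $\{\theta\}-\{\alpha\}$, it is ample for all $\varepsilon\in[0,1]$, so the envelope $P_{\theta-(1-\varepsilon)\alpha}(u)$ is uniformly bounded (above by $u$ and below by $P_{\theta-\alpha}(u)$ via the inclusion of $\PSH$-sets); rearranging the concavity inequality then yields $P_{\theta+\varepsilon\alpha}(u)-P_\theta(u)=O(\varepsilon)$ uniformly.

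For (viii), the direction $\Leftarrow$ is clear since model functions are continuous and uniform limits preserve continuity. For $\Rightarrow$, by \cite[Prop.~3.12]{gubler-martin} the family $\Fcal=\{\varphi\in\PSH_\modelD(X,\theta):\varphi\leq u\}$ is directed upward under max, with pointwise supremum $P_\theta(u)$ which is continuous by hypothesis. Dini's theorem for directed families of continuous functions on the compact Hausdorff space $X^\an$ then supplies uniform convergence: for each $\varepsilon>0$, the open cover $\{\{x:\varphi(x)>P_\theta(u)(x)-\varepsilon\}\}_{\varphi\in\Fcal}$ of $X^\an$ admits a finite subcover, whose max (again in $\Fcal$ by max-stability) approximates $P_\theta(u)$ within $\varepsilon$ everywhere.
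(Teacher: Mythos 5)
Your overall route is the same as the paper's (which simply defers to \cite[Prop.~8.2]{BFJ1} for (i)--(vi) and sketches (vii), (viii)); parts (i), (iii)--(v), (viii) and the rational-$t$ cases of (ii), (vii) are fine, and your Dini-for-nets argument for (viii) is exactly the paper's. There is, however, a concrete gap in your proof of (vi). Your two inequalities are $P_{\theta_m}(u)\leq P_{\theta+\varepsilon_m\alpha}(u)$ and $P_\theta(u)\leq P_{\theta_m+\varepsilon_m\alpha}(u)\leq P_{\theta+2\varepsilon_m\alpha}(u)$. Granting your claim that $P_{\theta+\varepsilon\alpha}(u)\to P_\theta(u)$, the first chain gives $P_{\theta_m}(u)\leq P_\theta(u)+o(1)$, but the second chain only sandwiches $P_\theta(u)$ between two quantities both converging to $P_\theta(u)$ and says nothing about $P_{\theta_m}(u)$ from below: a priori $P_{\theta_m}(u)$ could be much smaller than $P_{\theta_m+\varepsilon_m\alpha}(u)$. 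You need the perturbation estimate $P_{\sigma+\varepsilon\alpha}(u)\leq P_\sigma(u)+C\varepsilon$ at $\sigma=\theta_m$ with a constant uniform in $m$ (which your convexity argument does give, since $\{\theta_m\}-\{\alpha\}$ is ample for large $m$ and a single $(\theta-2\alpha)$-psh competitor works for all of them), or, more directly, produce $\theta_m$-psh competitors by mixing: for $\rho\in\PSH_\modelD(X,\theta-\alpha)$ with $\rho\leq u$ and $\varphi\in\PSH_\modelD(X,\theta)$ with $\varphi\leq u$, the function $(1-\varepsilon_m)\varphi+\varepsilon_m\rho$ is $\theta_m$-psh because $(\theta_m-\theta)+\varepsilon_m\alpha$ is nef, whence $P_{\theta_m}(u)\geq P_\theta(u)-C\varepsilon_m$. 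As written, the lower bound is missing.

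Two smaller points. First, in (iii) your parenthetical is false: an arbitrary $c\in\R$ is not a model function (the constant model functions are only the $\Q$-span of $\log|K^\times|$); the statement survives because these constants are dense in $\R$ and one applies (i), but the ``bijection'' argument needs this density step. Second, your extensions of (ii) and (vii) to irrational $t$ invoke (vi), which you may only apply when the relevant de Rham class is ample --- no ampleness is assumed in (ii) or (vii), so those cases are not actually covered by your argument (this is not circular, since (vi) only needs the rational-$t$ case of (ii), but it does leave the non-ample irrational-$t$ cases open; a direct approximation of $t\varphi+(1-t)\varphi'$ by model functions that remain psh for the limiting form requires an additional argument, e.g.\ mixing with a psh competitor for a slightly smaller class as above).
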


\begin{proof}
The proof of Properties (i)--(vi) in \cite[Prop.~8.2]{BFJ1} 
works in our setup as well. 
For (vi) it was used that an ample line bundle extends to an ample line bundle on a sufficiently high model which holds in our more general setting 
by \cite[Proposition 4.11]{gubler-martin}.
{Property (vii) is obvious for $t\in\Q_{>0}$ and an easy approximation argument then shows (vii) in general.}
{We have seen that $\theta$-psh model functions are closed under $\max$ and hence the $\theta$-psh model functions $\varphi \leq u$ form a directed family. 
We conclude that} (viii) follows from Dini's Theorem for nets 
\cite[p.~239]{Kelley1975} and the definition of $P_\theta(u)$.
\end{proof}

For the next proposition, we assume for simplicity that the valuation on $K$ is discrete.

\begin{prop}\label{regulari-prop5}
Let $L$ be an ample line bundle on $X$, $\KL$ an extension to a
model $\KX$ and $\theta={c_1(L,\metr_\Lcal)}\in \Zcal^{1,1}(X)$.
For $m>0$ let
\begin{equation}\label{def-base-ideal-rel}
\Da_m=\mbox{\rm Im}\, \bigl(H^0(\KX,\KL^{\otimes m})\otimes_{K^\circ} 
\KL^{\otimes -m}\to \KO_\KX\bigr)
\end{equation}
be the $m$-th base ideal of $\KL$ and 
$\varphi_m:=m^{-1}\log |\Da_m|$.
Then $\varphi_m\in \PSH_\modelD(X, \theta)$ and
\begin{equation}\label{regulari-prop5-eq}
\lim_{m\to\infty}\varphi_m=\sup_{m\in\N}\varphi_m=\pshenvelope_\theta(0)
\end{equation}
pointwise on $X^\an$.
\end{prop}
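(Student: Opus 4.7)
The plan is to split the proof into three pieces: verifying $\theta$-plurisubharmonicity of each $\varphi_m$, the easy inequality $\sup_m \varphi_m \le \pshenvelope_\theta(0)$ together with the identification $\lim_m \varphi_m = \sup_m \varphi_m$, and finally the reverse inequality, which is the substantive step.

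For the first assertion, I would blow up $\KX$ along the vertical ideal $\Da_m$ to obtain a model $\pi_m \colon \KX_m \to \KX$ satisfying $\Da_m \cdot \KO_{\KX_m} = \KO_{\KX_m}(-E_m)$, where $E_m$ is the effective exceptional Cartier divisor. By the universal property of the blowup, the pullback $\pi_m^* s$ of any global section $s \in H^0(\KX, \KL^{\otimes m})$ vanishes along $E_m$ and thus factors through a section of $\pi_m^* \KL^{\otimes m}(-E_m)$; as $s$ varies over generators of the base ideal, these sections globally generate $\pi_m^* \KL^{\otimes m}(-E_m)$ on $\KX_m$. A globally generated line bundle is nef, and the class of $\pi_m^* \KL^{\otimes m}(-E_m)$ in $N^1(\KX_m/S)$ represents $m(\theta + dd^c \varphi_m)$ by the definition of $\varphi_m = m^{-1}\log|\Da_m|$ and the interpretation of $dd^c$ on $\Z$-model functions attached to vertical divisors. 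Hence $\varphi_m \in \PSH_\modelD(X, \theta)$.

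Since $\Da_m$ is an ideal of $\KO_{\KX}$, the function $\varphi_m$ is non-positive on $X^\an$; combined with $\varphi_m \in \PSH_\modelD(X, \theta)$, the definition \eqref{def-psh-envelope} of the envelope gives $\varphi_m \le \pshenvelope_\theta(0)$. Multiplication of global sections yields $\Da_m \cdot \Da_{m'} \subseteq \Da_{m+m'}$, hence $(m+m')\varphi_{m+m'} \ge m\varphi_m + m'\varphi_{m'}$ pointwise on $X^\an$; applying Fekete's lemma to the superadditive sequence $(m \varphi_m(x))_m$ at each $x \in X^\an$ yields $\lim_m \varphi_m = \sup_m \varphi_m$ pointwise.

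The substantive step is the converse inequality $\pshenvelope_\theta(0) \le \sup_m \varphi_m$. Fix $\psi \in \PSH_\modelD(X, \theta)$ with $\psi \le 0$; it suffices to show $\psi(x) \le \sup_m \varphi_m(x)$ for each $x \in X^\an$. After refining to a model $\rho \colon \KY \to \KX$ and clearing denominators, write $k \psi = \varphi_D$ for some $k \in \N_{>0}$ and vertical divisor $D$ on $\KY$; then $\KM := \rho^* \KL^{\otimes k} \otimes \KO_\KY(D)$ is a nef line bundle on $\KY$ with generic fiber $L^{\otimes k}$. Since $L$ is ample, for $N$ sufficiently large the line bundle $\KM \otimes \rho^* \KL^{\otimes N}$ becomes ample on $\KY$ and, by a Fujita-type vanishing argument, globally generated for large tensor powers. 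The resulting sections, pushed forward along $\rho$ (using $\rho_* \KO_\KY = \KO_\KX$ for a normal refined model), contribute to $\Da_{\ell(k+N)}$ on $\KX$; taking logarithms and dividing by $\ell(k+N)$ yields an estimate $\psi \le \varphi_{\ell(k+N)} + \varepsilon_{\ell,N}$ with $\varepsilon_{\ell,N} \to 0$ as $\ell, N \to \infty$, whence $\psi \le \sup_m \varphi_m$. The main obstacle I expect lies in this last paragraph: controlling the pushforward along $\rho$ and showing that the globally generating sections of $\KM \otimes \rho^* \KL^{\otimes N}$ on $\KY$ translate, after correcting for the vertical divisor $D$, into base-ideal elements on $\KX$ whose associated log-absolute-value recovers $\psi$ up to vanishing error. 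This requires ampleness of $L$, Fujita-type vanishing, and a careful accounting of the exceptional and vertical contributions on the refinement.
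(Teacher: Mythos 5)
Your first two steps are correct and are exactly what Step 1 of the proof of \cite[Thm.~8.5]{BFJ1} does, which is all the paper itself invokes: the blowup of $\KX$ along $\Da_m$ exhibits $m\theta+dd^c(m\varphi_m)$ as the class of a globally generated, hence nef, line bundle; $\varphi_m\le 0$ then gives $\varphi_m\le \pshenvelope_\theta(0)$; and superadditivity of the base ideals plus Fekete gives $\lim_m\varphi_m=\sup_m\varphi_m$. The paper's only addition to the citation is the remark that, with its definition of $\pshenvelope_\theta(0)$ as a pointwise supremum over $\theta$-psh \emph{model} functions, the argument yields \eqref{regulari-prop5-eq} at every point of $X^\an$ and not merely at quasimonomial points.

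The gap is in your third step, at the claim that ``for $N$ sufficiently large the line bundle $\KM\otimes\rho^*\KL^{\otimes N}$ becomes ample on $\KY$.'' This is false in general: $\KL$ is an arbitrary extension of $L$ to the model $\KX$ and carries no positivity along the special fiber, and even if it did, its pullback under the modification $\rho\colon\KY\to\KX$ is trivial on the fibers of $\rho$ and so cannot be ample; adding the nef class $\KM$ does not help. Consequently no tensor power of $\KM\otimes\rho^*\KL^{\otimes N}$ need be globally generated and your section-producing mechanism does not start. The repair is the one underlying \emph{loc.~cit.} and available here through the extension result of Gubler--Martin cited in the proof of Proposition \ref{regulari-prop2}(vi): pass to a further model on which some power $L^{\otimes a}$ extends to a genuinely \emph{ample} line bundle $\KA$; this $\KA$ differs from the pullback of $\KL^{\otimes a}$ by a vertical divisor $F$ with $\varphi_F$ bounded (and which may be arranged to be anti-effective by twisting by a multiple of the special fiber), and $(\KM^{\otimes\ell}\otimes\KA)^{\otimes j}$ is then ample, hence globally generated for $j\gg 0$ by Serre --- no Fujita-type vanishing is needed for this pointwise statement; Keeler's theorem enters only in the uniform convergence of Theorem \ref{relative Theorem 8.5}. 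The resulting sections lie in $\Da_{(k\ell+a)j}$ and give $\varphi_{(k\ell+a)j}\ge \tfrac{k\ell}{k\ell+a}\,\psi-\tfrac{C}{k\ell+a}$, which yields $\psi\le\sup_m\varphi_m$ on letting $\ell\to\infty$. Your error term $\varepsilon_{\ell,N}$ is gesturing at exactly this bookkeeping, but without replacing $\rho^*\KL^{\otimes N}$ by an ample extension of a power of $L$ on a higher model the argument does not close.
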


\proof
This is shown as in Step 1 of the proof of \cite[Thm.~8.5]{BFJ1}.
Observe that the arguments which show 
\eqref{regulari-prop5-eq} in \emph{loc.~cit.}~on the subset of 
quasimonomial points give us \eqref{regulari-prop5-eq}
immediately on $X^\an$ using our different definition
of the $\theta$-psh envelope.
\qed

\begin{prop}
\label{lemma A4 BFJ2}
Let $K'/K$ be a finite normal extension and let $q \colon X' := X \otimes_K{K'} \to X$ be the natural projection. 
For $\theta \in \Zcal^{1,1}(X)$  and {$u \in C^0(\Xan)$}, 
we have
\begin{equation}
q^*(P_\theta(u)) = P_{q^*\theta}(q^*(u)).
\end{equation}
\end{prop}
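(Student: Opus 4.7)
The plan is Galois descent. Write $G \coloneqq \operatorname{Gal}(K'/K)$; since $K'/K$ is finite normal, $G$ acts on $X'$ over $X$ and on the Berkovich analytification $(X')^{\an}$ over $X^{\an}$, and as a topological space $X^{\an}$ is the quotient $(X')^{\an}/G$. Pullback by $q$ sends model functions on $X$ to model functions on $X'$, is compatible with the $dd^c$ operator, and maps the model metric associated to a nef line bundle $\Lcal$ on a model $\Xcal$ of $X$ to the one associated to the pullback of $\Lcal$ to $\Xcal \otimes_{\kcirc} (K')^\circ$, which is again nef. Hence pullback sends $\PSH_\modelD(X,\theta)$ into $\PSH_\modelD(X', q^*\theta)$ and preserves pointwise inequalities. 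For any $\varphi \in \PSH_\modelD(X,\theta)$ with $\varphi \leq u$, this yields $q^*\varphi \in \PSH_\modelD(X', q^*\theta)$ with $q^*\varphi \leq q^*u$; taking suprema gives the easy inequality $q^*P_\theta(u) \leq P_{q^*\theta}(q^*u)$.

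For the reverse inequality, I would use that $q^*\theta$ and $q^*u$ are both $G$-invariant, so the action $\psi \mapsto \sigma^*\psi$ for $\sigma \in G$ preserves both $\PSH_\modelD(X', q^*\theta)$ and the constraint $\psi \leq q^*u$. By the $\max$-stability of $\PSH_\modelD$ recalled in \ref{max psh}, replacing any competitor $\psi$ by $\max_{\sigma \in G} \sigma^*\psi$, we may assume in the supremum defining $P_{q^*\theta}(q^*u)$ that $\psi$ is $G$-invariant. The decisive step is then to show that every $G$-invariant $\psi \in \PSH_\modelD(X', q^*\theta)$ is of the form $\psi = q^*\varphi$ for a unique $\varphi \in \PSH_\modelD(X,\theta)$. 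For this I would choose a model $\Xcal'$ of $X'$ on which $\psi$ and $q^*\theta$ are simultaneously determined and enlarge it so that $G$ acts compatibly, by passing to the normalization of the join of all Galois translates (which remains a model of $X'$). The $G$-invariant vertical $\Q$-Cartier divisor representing $\psi$ on $\Xcal'$ then descends through the finite quotient $\Xcal' \to \Xcal'/G$, whose generic fiber is $X$, producing $\varphi \in \modelD(X)$ with $q^*\varphi = \psi$; nefness descends along this finite flat surjection, so $\varphi$ is $\theta$-psh. Finally, the surjectivity of $q^{\an}$ gives $\varphi \leq u$ from $q^*\varphi \leq q^*u$, hence $\varphi \leq P_\theta(u)$ and $\psi = q^*\varphi \leq q^*P_\theta(u)$, which completes the proof.

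The hardest part is the descent of $G$-invariant model data: constructing a $G$-equivariant model refinement, verifying that the quotient is a model of $X$ in the required sense, and checking that nefness is both preserved and reflected by the quotient morphism. These ingredients are handled by standard Galois descent for finite groups acting on quasi-projective schemes together with the fact that nefness is preserved and reflected under proper surjective morphisms of proper schemes.
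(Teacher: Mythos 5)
Your easy inequality and your reduction to $G$-invariant competitors via the max-stability of $\PSH_\modelD$ are fine, and for a \emph{Galois} extension your descent argument is essentially the one the paper uses (it simply invokes the argument of \cite[Lemma~A.4]{BFJ2} for that case). The gap is that the proposition is stated for a finite \emph{normal} extension, and in the positive-characteristic setting that is the whole point of this paper such an extension need not be separable. Your decisive step --- that every $G$-invariant $\psi\in\PSH_\modelD(X',q^*\theta)$ equals $q^*\varphi$ for a $\varphi$ on $X$ because the quotient $\Xcal'/G$ has generic fiber $X$ --- fails there: the fixed field $(K')^{G}$ is the purely inseparable part of $K'$ over $K$, so the generic fiber of $\Xcal'/G$ is $X\otimes_K (K')^{G}$, and when $(K')^{G}\neq K$ your descent only reaches $X\otimes_K(K')^{G}$, not $X$. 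In the extreme case of a purely inseparable extension the group $G$ is trivial, every competitor is ``$G$-invariant'', and group-theoretic descent gives nothing at all.

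The missing ingredient is purely inseparable descent, which the paper isolates as Lemma \ref{lemma purely inseparable}: one first splits $K'/K$ into a purely inseparable part followed by a Galois part; for the inseparable part one shows that $(X')^{\an}\to\Xan$ is a homeomorphism identifying the $G$-topologies and that $q^*$ is a bijection on model functions and model metrics, using that every $g\in\Ocal(V')$ satisfies $g^{p^e}=f\circ q$ for some $f\in\Ocal(V)$ (so $-\log|g|=p^{-e}\,(-\log|f|)\circ q$ descends as a $\Q$-model function), together with the Gerritzen--Grauert theorem to compare the $G$-topologies; the projection formula on special fibers then transports semipositivity in both directions. With that lemma in place, your Galois argument finishes the proof. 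Without this Frobenius-type step the argument is incomplete precisely in the case the paper needs it for.
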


\begin{proof}
Splitting the extension $K' / K$ into a purely 
inseparable part and a Galois part, we can reduce to two cases.
In the first case of a purely inseparable extension,  
the result follows from Lemma \ref{lemma purely inseparable} below.
In the second case of a Galois extension, we can apply the 
argument of \cite[Lemma A.4]{BFJ2}.
\end{proof}

\begin{rem} \label{model functions Q linear functions}
{(i) We always equip strictly $K$-analytic spaces with the 
\emph{$G$-topology} induced 
by the strictly $K$-affinoid domains.}
{We refer to \cite[\S 2.2]{berkovich-book} for the notion of a strictly $K$-affinoid domain and to \cite[\S 1.6]{Berko93} for 
the construction of the $G$-topology.}

(ii)
{We recall from \cite[Def.~2.8, 2.11]{gubler-martin}  
that a piecewise $\Q$-linear function
on a strictly $K$-analytic space $W$ is a function $f \colon W \to \R$ 
such that there is a $G$-covering $\{U_i\}_{i \in I}$ of $W$ by strictly affinoid domains, 
analytic functions $\gamma_i \in \Ocal(U_i)^\times$ and non-zero $m_i \in \N$ 
with   $m_i f = -\log|\gamma_i|$  on $U_i$ for every $i \in I$.
By \cite[Rem.~2.6, Prop.~2.10]{gubler-martin}, the notions of model functions {and} $\Q$-linear functions agree. }
\end{rem}

\begin{lemma}
\label{lemma purely inseparable}
Let $L$ be a line bundle on $X$.
Let $K'/K$ be a finite purely inseparable extension and let $q \colon X' \coloneqq X \otimes_K{K'} \to X$ be the natural projection.
$G$-topology induced by the strictly $K$-affinoid domains. 
{Then the map $(X')^{\rm an}\to \Xan$ 
is a homeomorphism and  
 it also identifies the $G$-topologies.}  
The map $q^*$ induces a bijection  between the set of model metrics on $L$ and the set  of model metrics on 
$q^*(L)$.
Moreover this bijection identifies semipositive metrics on $L$ and on $q^*(L)$.
\end{lemma}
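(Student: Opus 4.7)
The plan is to prove the four assertions in sequence: the homeomorphism $(X')^\an \to \Xan$, the identification of the $G$-topologies, the bijection on model metrics induced by $q^*$, and the preservation of semipositivity. The non-trivial case is $\cha K = p > 0$, and the key technical tool is the following $p^n$-th power trick: choose $n \in \N$ with $(K')^{p^n} \subseteq K$, so that for every complete valued extension $F/K$ and every $\alpha \in F \ctp_K K'$ one has $\alpha^{p^n} \in F$, by expanding a pure tensor and applying the Frobenius in characteristic $p$.

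First I would prove the homeomorphism together with the $G$-topology identification. The scheme map $q$ is a universal homeomorphism since $\Spec K' \to \Spec K$ is radicial. For $q^\an$, the fiber over $x \in \Xan$ equals $\mathcal{M}(\Hcal(x) \ctp_K K')$; the $p^n$-th power trick forces any multiplicative seminorm extending the norm on $\Hcal(x)$ to equal $|\alpha| = |\alpha^{p^n}|^{1/p^n}$, hence unique, so the fiber is a singleton. Together with properness and continuity this yields a homeomorphism. For the $G$-topology, a strictly $K$-affinoid domain $V \subseteq \Xan$ gives the strictly $K'$-affinoid domain $V \ctp_K K'$. Conversely, a strictly $K'$-affinoid domain is cut out in some ambient strictly $K'$-affinoid (which, by an inductive covering argument starting from a strictly $K$-affinoid cover of $\Xan$, I may take to be of the form $W \ctp_K K'$) by finitely many rational inequalities $|f_i| \leq |g_i|$; replacing $f_i, g_i$ by their $p^n$-th powers yields equivalent inequalities with functions in $\Ocal(W)$, so the domain descends to a strictly $K$-affinoid domain in $\Xan$.

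Next I would prove the bijection on model metrics. Injectivity is immediate from the homeomorphism. For surjectivity, fix a model metric $\metr_0$ on $L^\an$ and let $\metr'$ be a model metric on $q^*L^\an$. Then $g := -\log(\metr'/q^*\metr_0)$ is a model function on $(X')^\an$, and via the homeomorphism $g$ descends to a continuous function $f$ on $\Xan$; it suffices to show that $f$ is itself a model function. By Remark \ref{model functions Q linear functions}(ii), this amounts to $f$ being piecewise $\Q$-linear. Locally on a strictly $K'$-affinoid domain $U'$ where $mg = -\log|\gamma|$ for an invertible $\gamma \in \Ocal(U')^\times$, we get $mp^n f = -\log|\gamma^{p^n}|$; using the $G$-topology identification to write $U' = U \ctp_K K'$ for a strictly $K$-affinoid $U \subseteq \Xan$, the $p^n$-th power trick places $\gamma^{p^n}$ in $\Ocal(U)^\times$, realizing $f$ as piecewise $\Q$-linear. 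Hence $\metr_0 \cdot e^{-f}$ is a model metric on $L^\an$ whose pullback is $\metr'$.

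Finally, for semipositivity, a model metric on $L^\an$ determined by a model $(\KX, \KL)$ of $(X, L^{\otimes m})$ pulls back to the model metric determined by $(\KX \otimes_\kcirc K'^\circ, q^*\KL)$. The residue extension $\Kt'/\Kt$ is again finite purely inseparable, so the base change on special fibers is a universal homeomorphism and induces a bijection between the sets of closed curves. For corresponding closed curves $C \subseteq \KX_s$ and $C' \subseteq \KX_s \otimes_{\Kt} \Kt'$, the projection formula gives $q^*\KL \cdot C' = [\Kt':\Kt] \cdot \KL \cdot C$ up to a positive rational multiple reflecting the non-reducedness of the base change, so $\KL$ is nef if and only if $q^*\KL$ is nef, yielding the identification of semipositive metrics under $q^*$. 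The main obstacle is the descent of model functions in the third step: the elementary $p^n$-th power trick must be coupled with the $G$-topology identification to ensure that $p^n$-th powers of $K'$-analytic generators actually land in a $K$-analytic affinoid algebra rather than merely a $K'$-analytic one.
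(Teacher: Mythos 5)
Your proposal is correct and follows essentially the same route as the paper's proof: the $p^e$-th power identity $g^{p^e}=f\circ q$ for functions on affinoids, descent of affinoid domains and of piecewise $\Q$-linear functions through the identified $G$-topologies, reduction of the model-metric bijection to model functions via a fixed reference metric $\metr_0$, and the projection formula on closed curves in the special fibers for semipositivity. The only point worth flagging is that a general strictly affinoid subdomain is a finite union of rational domains rather than being cut out by a single system of rational inequalities, so your descent of domains needs the Gerritzen--Grauert theorem, which the paper invokes explicitly at exactly this step.
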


\begin{proof}
{As in Remark \ref{model functions Q linear functions}
 we use} the $G$-topology induced by the strictly $K$-affinoid domains.
{We first prove} that the map $q \colon (X')^{\rm an}\to \Xan$ 
is a homeomorphism and 
that it also identifies the $G$-topologies. 
In fact, this follows easily from the following claim:

\emph{Step 1: Let $V$ be a strictly affinoid space over $K$ and $V' \coloneqq V \hat{\otimes}_K K'$. 
Then the natural projection $q \colon V' \to V$ is a homeomorphism which identifies the $G$-topologies.}

Let $p^e= [K':K]$ be the degree of the purely inseparable field extension. It is clear that for every $g\in \Ocal(V')$, there is $f\in \Ocal(V)$ with 
\begin{equation} \label{inseparable degree}
g^{p^e}=f\circ q.
\end{equation}
This property easily shows that $q \colon V' \to V$ is a homeomorphism which we read now as an identification. 
Using that \eqref{inseparable degree} holds also for rational functions $g$ on $V'$ and $f$ on $V$, we see that $V$ and $V'$ have the same strictly rational domains. 
By the Gerritzen--Grauert theorem \cite[Cor.~7.3.5/3]{bosch-guentzer-remmert}, we deduce the Step 1.

Next we prove the bijective correspondence between the model metrics on $L$ and on $L'$. 
Since $L$ admits a model metric \cite[2.1]{gubler-martin},
it is enough to show that we have a bijective correspondence between model functions on $\Xan$ and 
model functions on $(X')^{\rm an}$. 
{By Remark \ref{model functions Q linear functions} we may check the same correspondence 
for $\Q$-linear functions. This may then be done $G$-locally {and hence the correspondence follows from the following step}.}

\emph{Step 2: Using the same assumptions as in Step 1, 
the map $f \mapsto f \circ q$  is an isomorphism from the group of {piecewise $\Q$-linear} functions on $V$ onto the group of {piecewise $\Q$-linear} functions on $V'$.}

Using the above definition of piecewise $\Q$-linear functions, Step 1 and \eqref{inseparable degree} yield easily Step 2.

To deduce the lemma, it remains to check that  the identification between the model metrics on $L$ and $L'$ preserves semipositivity. 
This is an easy consequence of the projection formula
applied to finite morphisms between closed curves in the special fibers of models. 
\end{proof}

\section{Continuity of plurisubharmonic envelopes on curves}\label{section3}

In this section, $K$ is any field endowed with a non-trivial non-archimedean complete valuation {$v\colon K\to \R$} with value group $\Gamma \subset \R$. 
In this section, we consider a smooth projective curve $X$ over $K$. 
The goal is to prove the following result:

\begin{theo}\label{thm-continuity-theta-envelope-curves}
If $\theta$ is a closed $(1,1)$-form on $\Xan$ 
with nef de Rham class $\{\theta\}$ and if $u \in C^0(\Xan)$, then $\pshenvelope_\theta(u)$ is 
a uniform limit of $\theta$-psh model functions and 
thus $\pshenvelope_\theta(u)$ is continuous on $\Xan$. 
\end{theo}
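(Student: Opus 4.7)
The plan is to reduce the statement to an envelope computation on the skeleton of a strictly semistable model of $X$ and then to solve the resulting obstacle problem on this finite metric graph by combining Thuillier's potential theory with the slope formula. First I would use Proposition~\ref{regulari-prop2}(v), which says that $u\mapsto \pshenvelope_\theta(u)$ is $1$-Lipschitz for the supremum norm, together with the density of model functions in $C^0(\Xan)$, to reduce via a diagonal argument to the case where $u$ itself is a model function. Next I would enlarge $K$ by a finite normal extension in order to have available a strictly semistable model $\Xcal$ of $X$ on which both $u$ and $\theta$ are defined; the descent of the conclusion to $X$ is then provided by Proposition~\ref{lemma A4 BFJ2} and Lemma~\ref{lemma purely inseparable}. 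If $\{\theta\}$ is nef but not ample, I would replace $\theta$ by $\theta+\varepsilon\omega$ for an ample form $\omega$, prove the statement in the ample case, and let $\varepsilon\to 0$ using parts~(v) and~(vi) of Proposition~\ref{regulari-prop2}.

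Let $p\colon \Xan\to\Delta$ be the canonical retraction to the skeleton of $\Xcal$. The crucial step is to invoke Proposition~\ref{restriction of the metric to skeleton}(iii), which in terms of $\theta$-potentials says that for every $\varphi\in\PSH_\modelD(X,\theta)$ the function $\varphi\circ p$ is again $\theta$-psh and satisfies $\varphi\circ p\geq\varphi$. Since $u$ is a model function on $\Xcal$ we have $u\circ p=u$, so the inequality $\varphi\leq u$ entails $\varphi\circ p\leq u$; taking the supremum over admissible $\varphi$ then yields
\[
\pshenvelope_\theta(u)=\pshenvelope_\theta(u)\circ p,
\]
so the envelope is determined by its restriction to the finite metric graph $\Delta$.

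To analyse that restriction I would appeal to the slope formula of Katz--Rabinoff--Zureick-Brown \cite[Thm.~2.6]{katz-rabinoff-zureick}: the trace on $\Delta$ of a $\theta$-psh model function is a continuous piecewise $\Q$-affine function whose slope balance at each point of $\Delta$ is bounded above by the local contribution of $\theta|_\Delta$. Consequently, $\pshenvelope_\theta(u)|_\Delta$ is realised as the upper envelope of the continuous piecewise affine functions on $\Delta$ obeying a prescribed upper bound on their Laplacian in Thuillier's sense and lying below the piecewise affine obstacle $u|_\Delta$. This is a classical obstacle problem on a finite metric graph, for which Thuillier's potential theory \cite{thuillier05:_theor} delivers a continuous piecewise affine solution.

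Finally I would approximate this continuous piecewise affine function on $\Delta$ uniformly by piecewise $\Q$-affine functions with rational breakpoints and rational slopes which, after refining the semistable model, arise as $\theta$-psh model functions on $X$; a small downward correction ensures that the rational approximation preserves the discrete Laplacian inequality, and hence the $\theta$-psh property. In combination with Proposition~\ref{regulari-prop2}(viii) this proves both the continuity of $\pshenvelope_\theta(u)$ and its expression as a uniform limit of $\theta$-psh model functions. I expect this last step to be the main technical obstacle: one has to keep track simultaneously of the discrete slope inequality and of the refinement of the semistable model needed to realise the rational approximants as honest model functions on~$X$ itself.
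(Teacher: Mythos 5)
Your proposal follows the paper's own proof very closely: reduce to model functions via density and the Lipschitz property of the envelope, pass to a finite normal extension to obtain a strictly semistable model on which $\theta$ and $u$ are determined and descend via Proposition~\ref{lemma A4 BFJ2}, use the retraction result (Proposition~\ref{lemma psh retraction curve}) to show that the envelope factors through $p$, and then treat the resulting obstacle problem on the skeleton via the slope formula and a rational approximation of slopes. This is exactly the content of Proposition~\ref{prop psh envelope model curve} and the proof of Theorem~\ref{thm-continuity-theta-envelope-curves}; the only cosmetic difference is that the paper solves the obstacle problem on the graph by an explicit linearization argument rather than by citing Thuillier.

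The one step that does not work as written is your reduction from nef to ample de Rham class. Proposition~\ref{regulari-prop2}(vi) is stated only when the \emph{limiting} class $\{\theta\}$ is ample, so it cannot be invoked to pass from $P_{\theta+\varepsilon\omega}(u)$ to $P_\theta(u)$ when $\{\theta\}$ is merely nef; and part~(v) compares envelopes for a fixed form and varying $u$, so it does not help either. A priori one only knows that $P_{\theta+\varepsilon\omega}(u)$ decreases as $\varepsilon\downarrow 0$ to some limit $\geq P_\theta(u)$, and neither the identification of that limit with $P_\theta(u)$ nor the uniformity of the convergence --- which is precisely what is needed to conclude continuity of the limit --- is automatic. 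Fortunately the detour is unnecessary: in your argument the only role of positivity of $\{\theta\}$ is to guarantee that the competitor family $\{\varphi\in\PSH_\modelD(X,\theta)\mid \varphi\leq u\}$ is non-empty, and on a curve this already holds when $\{\theta\}$ is nef. The slope-formula analysis on the skeleton and the rational approximation then go through verbatim, so you should simply run your main argument for nef $\{\theta\}$ directly, as the paper does.
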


\begin{art} \label{strictly semistable}
As a main tool in the proof, we need strictly semistable models of $X$ and their canonical skeletons. 
This construction is due to Berkovich in \cite{berkovich-1999}. 
We recall here only the case of a smooth projective curve $X$ over $K$ for which we can also refer to \cite{thuillier05:_theor}. 

A  $\kcirc$-model 
$\Xcal$ of $X$ {as in \ref{models}} is called {\it strictly semistable} 
if there is an open covering of $\Xcal$ by open subsets $\Ucal$ 
such that there are \'etale morphisms $\Ucal \to \Spec(\kcirc[x,y]/(xy - \rho_\Ucal))$ for some {$\rho_\Ucal \in K^{\circ\circ}$}. 
Applying the construction in \cite[\S 2.2]{thuillier05:_theor} to the associated formal scheme $\hat{\Xcal}$, 
we get a canonical \emph{skeleton} {$S(\Xcal)\subseteq X^\an$} 
with a proper strong deformation retraction 
$\tau \colon \Xan \to S(\Xcal)$. 
The skeleton $S(\Xcal)$  {carries a canonical structure of} a metrized graph. 
We note that the generic fiber of the formal scheme $\hat{\Ucal}$ intersects $S(\Xcal)$ in an edge  of length $v(\rho_\Ucal)$. 
By using the reduction map, the vertices of $S(\Xcal)$ correspond to the irreducible components of the special fiber $\Xcal_s$
and the open edges of $S(\Xcal)$ correspond to the singular points of $\Xcal_s$.
\end{art} 

\begin{rem}
By definition, a strictly semistable model $\Xcal$ of $X$ is proper over $\kcirc$. 
Using that $X$ is a curve, we will deduce  that $\Xcal$ is projective over $\kcirc$. 
Indeed, the special fiber $\Xcal_s$ is a proper curve over the residue field and hence projective. 
It is easy to construct an effective Cartier divisor $D$ on $\Xcal$ 
whose support intersects any irreducible component of $\Xcal_s$ in a single closed point. 
By \cite[Exercise 7.5.3]{Liu}, the restriction of $D$ to $\Xcal_s$ is ample. 
It follows from \cite[Cor.~9.6.4]{EGA4} that $D$ is ample and hence $\Xcal$ is projective.

Similarly, we can define {strictly semistable formal models}  of $\Xan$. 
Using that $X$ is a smooth projective curve, the algebraization theorem of Grothendieck \cite[Thm.~5.4.5]{EGAIII} and 
its generalizations to the non-noetherian setting  \cite[Cor.~2.13.9]{AbbEGR}, 
\cite[Prop.~I.10.3.2]{FK}  show that formal completion induces an equivalence of categories 
between strictly semistable algebraic models of $X$ and strictly semistable formal models of $\Xan$. 
Here, we need a similar argument as above to construct an effective formal Cartier divisor which restricts to an ample Cartier divisor on the special fiber. 
\end{rem}

\begin{defi} \label{piecewise linear}
A function $f \colon S(\Xcal) \to \R$ is called \emph{piecewise linear} if there is a subdivision of $S(\Xcal)$ 
such that the restriction of $f$ to each edge  of the subdivison is affine. 
We call such an $f$ \emph{integral $\Gamma$-affine} if there is a subdivision such that each edge $e$ has length in $\Gamma$, 
such that $f|_e$ has integer slopes, and such that $f(v) \in \Gamma$ for each vertex $v$ of the subdivision.
\end{defi}

\begin{prop} \label{model functions and skeleton}
Let $\Xcal$ be a strictly semistable model of $X$ and let $f \colon \Xan \to \R$ be a function.
Then the following properties hold:
 \begin{itemize}
\item[(a)] If $f$ is a $\Z$-model function, then $f|_{S(\Xcal)}$ is a  piecewise linear function which is integral $\Gamma$-affine.
\item[(b)] 
The function $f$ is a $\Z$-model function determined on $\Xcal$ 
if and only if {$f=F\circ \tau$ for some function} 
$F \colon S(\Xcal)\to \R $ 
which is affine on each edge of $S(\Xcal)$ with integer slopes and with $f(v) \in \Gamma$ for each vertex $v$ of $S(\Xcal)$. 
\item[(c)] If $G$ is a piecewise linear function on $S(\Xcal)$ which is integral $\Gamma$-affine, then $G \circ \tau$ is a $\Z$-model function.
\end{itemize}
\end{prop}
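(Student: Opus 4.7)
The plan is to localize to a strictly semistable chart $\Ucal \to \Spec(\kcirc[x,y]/(xy-\rho_\Ucal))$, extract the local formulas for vertical divisors and the retraction $\tau$, and then assemble the data on the skeleton. On such a chart the corresponding edge of $S(\Xcal)$ is parametrized by $t\in [0, v(\rho_\Ucal)]$ via the monomial points $\xi_t$ with $-\log|x(\xi_t)| = t$ and $-\log|y(\xi_t)| = v(\rho_\Ucal) - t$, and by construction $\tau$ sends an analytic point $p$ lying over $\Ucal$ to $\xi_{-\log|x(p)|}$, thereby preserving all monomial absolute values $|x^a y^b|$. A vertical divisor $D$ on $\Xcal$ restricts on $\Ucal$ to $n_1 Y_1 + n_2 Y_2$ with local equation $x^{n_1} y^{n_2}$ up to a unit, so $\varphi_D(p) = n_1(-\log|x(p)|) + n_2(-\log|y(p)|) = \varphi_D(\tau(p))$; on the edge this is affine with integer slope $n_1-n_2$ and vertex values $n_i\,v(\rho_\Ucal) \in \Gamma$.

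Part (a) and the ``only if'' direction of (b) then follow by passing to a common refinement of $\Xcal$ and the model carrying the given $\Z$-model function and gluing the local affine formulas: the factorization $\varphi_D = \varphi_D|_{S(\Xcal)}\circ\tau$ holds chart by chart, and the restriction to the skeleton is affine on each edge with integer slopes and vertex values in $\Gamma$.

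For the ``if'' direction of (b), given $F\colon S(\Xcal)\to\R$ satisfying the stated conditions, I would reconstruct a vertical divisor $D=\sum n_iY_i$ realizing $F$ by observing that the integer-slope condition forces $F(v)-F(v')\in \lambda_e\Z$ across any edge $e$ of length $\lambda_e$, so by connectedness of $S(\Xcal)$ all vertex values are congruent modulo the $\Z$-span of the edge lengths to a common element $c\in\Gamma$. After subtracting the constant $\Z$-model function associated to a scalar $a\in K^\times$ with $v(a)=c$, the residual vertex values become integer multiples of the relevant edge lengths, from which the multiplicities $n_i$ are read off unambiguously, and the local formula then yields $\varphi_D + c = F\circ\tau$.

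For (c), a subdivision of $S(\Xcal)$ witnessing the piecewise-affine structure of $G$ can be realized as the skeleton of a refined model $\Xcal'\to\Xcal$ (via admissible blow-ups in closed points of $\Xcal_s$), through which the retraction $\tau_\Xcal$ factors; on $S(\Xcal')$ the function $G$ is edgewise affine with integer slopes and $\Gamma$-valued at every vertex, so (b) applied on $\Xcal'$ exhibits $G\circ\tau_\Xcal$ as a $\Z$-model function. The main obstacle is the ``if'' direction of (b): reconciling the integer-slope and $\Gamma$-valued vertex conditions with the integer multiplicities of components of $\Xcal_s$ via the common constant shift from $K^\times$, and checking that the resulting multiplicities glue into a globally well-defined vertical divisor on $\Xcal$.
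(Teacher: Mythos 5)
Your overall strategy --- a direct chart-by-chart analysis in the strictly semistable local model $\Ucal\to\Spec(\kcirc[x,y]/(xy-\rho_\Ucal))$ --- is a reasonable substitute for the paper's proof, which instead quotes \cite[Prop.~5.6]{gubler-rabinoff-werner} for (a) and \cite[Prop.~B.7]{gubler-hertel} for (b), and only argues (c) directly (exactly as you do, by realizing the subdivision as the skeleton of a dominating strictly semistable model with the same retraction). However, your local computation is wrong, and the error propagates into the ``if'' direction of (b). The local equation of $n_1Y_1+n_2Y_2$ near the node is \emph{not} $x^{n_1}y^{n_2}$: if, say, $K$ is discretely valued with uniformizer $\varpi$ and $\rho_\Ucal=\varpi^{e}$, then $\div(x)=eY_1$, $\div(y)=eY_2$ and $\div(\varpi)=Y_1+Y_2$ (the special fiber being reduced), so $n_1Y_1+n_2Y_2$ is Cartier at the node only when $e\mid n_1-n_2$, with local equation $x^{a}y^{b}\varpi^{c}$ where $n_1=ae+c$ and $n_2=be+c$. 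The restriction to the edge then has slope $a-b=(n_1-n_2)/e$ and endpoint values $n_i\,v(\varpi)$ --- not slope $n_1-n_2$ and values $n_i\,v(\rho_\Ucal)$ as you assert. Indeed your formula is internally inconsistent: a component meeting two nodes with different $\rho_\Ucal$'s would receive two different values at the same vertex. Consequently the reconstruction in the ``if'' direction of (b) (``residual vertex values become integer multiples of the relevant edge lengths, from which the multiplicities are read off'') is mis-calibrated and does not produce the correct divisor. The real content of that direction is to glue, into a line bundle with meromorphic section, local equations $\lambda\,x^{a}y^{b}$ at the nodes (with $a-b$ the prescribed slope and $\lambda\in K^{\times}$ chosen using $F(v)\in\Gamma$) and constants on the smooth locus of $\Xcal_s$; this gluing --- which you yourself flag as the main obstacle --- is left unresolved, and the sketch you offer for it rests on the incorrect local formula.

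There is a second gap in (a). A ``common refinement'' of $\Xcal$ and the model on which the given $\Z$-model function is determined exists as a model, but your argument needs it to be \emph{strictly semistable over $K$} for the chart analysis to apply, and such a dominating strictly semistable model need not exist over $K$ itself (semistable reduction in general requires a finite extension of $K$; this is precisely why the paper passes to $\C_K$ in the proof of Proposition \ref{lemma slope psh}). The paper avoids the issue by using that a $\Z$-model function is $G$-locally of the form $-\log|\gamma|$ for a rational function $\gamma$ and invoking \cite[Prop.~5.6]{gubler-rabinoff-werner}, which applies to the fixed skeleton $S(\Xcal)$ without any refinement of models. Your treatment of (c) is correct and coincides with the paper's argument.
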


\begin{proof}
In the $G$-topology on $\Xan$ induced by the strictly $K$-affinoid domains, a $\Z$-model function is given locally by $-\log|\gamma|$ for a rational function $\gamma$ on $X$. 
Hence (a) follows from  \cite[Prop.~5.6]{gubler-rabinoff-werner}. 
Property (b) was proven in \cite[Prop.~B.7]{gubler-hertel} for any dimension. 

To prove (c), we choose a subdivision  of $S(\Xcal)$ as in Definition \ref{piecewise linear} for $G$. 
As in \cite[\S 3]{baker-payne-rabinoff2}, this subdivison is the skeleton of a strictly semistable model $\Xcal'$ dominating $\Xcal$
 and with the same retraction $\tau$. 
Then (c) follows from (b).
Note that the quoted papers have the standing assumption that $K$ is algebraically closed, but it is straightforward to verify that this was not used for the quoted results.
\end{proof}

\begin{art}\label{preparations}
Now we consider a model function $f$ on $\Xan$. 
Using the setting of Proposition \ref{model functions and skeleton} and (b), 
we see that $f = F \circ \tau$ for a piecewise linear function $F$ on $S(\Xcal)$ such 
that $m F$ is integral $\Gamma$-affine for some non-zero $m \in \N$. 
We also assume that $\theta$ is a closed $(1,1)$-form on $\Xan$ which is determined on our given strictly semistable model $\Xcal$. 
\end{art}

We have the following useful characterization  for $f$ to be $\theta$-psh in terms of slopes:

\begin{prop} \label{lemma slope psh}
Under the hypotheses {from \ref{preparations}},  
the model function $f$ 
 is $\theta$-psh 
if and only if $F$ satisfies for all $x \in \Delta  \coloneqq S(\Xcal) $
\begin{align} \label{slope formula}
\sum_{\nu \in T_x(\Delta) } w_x(\nu) \lambda_{x,\nu}(F) + \deg(\theta |_{\Ccal_x}) \geq 0,
\end{align}
where $\nu$ ranges over the {set $T_x(\Delta)$ of} outgoing  tangent  directions at $x$. 
Here, $\lambda_{x,\nu}(F)$ denotes the slope of $F$ at $x$ along $\nu$ and 
we have the weight $w_x(\nu) \coloneqq [\tilde{K}(p_\nu):\tilde{K}]$ for the singularity $p_\nu$ of $\Xcal_s$ 
corresponding to the edge of $S(\Xcal)$ at $x$ in the direction of $\nu$. 
Moreover, if $x$ is a vertex of $S(\Xcal)$, then $\Ccal_x$ denotes  the  corresponding irreducible component $\Ccal_x$ of $\Xcal_s$ and 
if $x$ is not a vertex, then   $\deg(\theta |_{\Ccal_x}) \coloneqq 0$.
\end{prop}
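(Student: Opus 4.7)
The plan is to reduce the $\theta$-psh condition to an intersection-theoretic positivity on a refined semistable model, and then translate this into the slope inequality via a computation equivalent to the slope formula of Katz, Rabinoff, and Zureick-Brown. First I would refine the skeleton. Replacing $F$ by a positive integer multiple (which preserves both sides of the equivalence), I may assume by \ref{preparations} that $F$ is integral $\Gamma$-affine. By Proposition \ref{model functions and skeleton}(c), a subdivision of $S(\Xcal)$ on which $F$ is affine on each edge is itself the skeleton of a strictly semistable model $\pi \colon \Xcal' \to \Xcal$ dominating $\Xcal$. By Proposition \ref{model functions and skeleton}(b) I write $f = \varphi_D$ for some vertical $\Z$-divisor $D$ on $\Xcal'$. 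Letting $\Lcal$ be a model line bundle on $\Xcal$ representing $\theta$, the form $\theta + dd^c f$ is represented in $N^1(\Xcal'/S)$ by $[\pi^*\Lcal + \Ocal_{\Xcal'}(D)]$. Consequently, $f$ is $\theta$-psh if and only if
\[
(\pi^*\Lcal + \Ocal_{\Xcal'}(D)) \cdot \Ccal \geq 0
\]
for every irreducible component $\Ccal$ of $\Xcal'_s$, which exhaust the closed curves in $\Xcal'_s$.

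Next I would translate each intersection inequality into a slope inequality at the corresponding vertex of $S(\Xcal')$. For the component $\Ccal_x$ attached to a vertex $x$, the projection formula gives $\pi^*\Lcal \cdot \Ccal_x = \deg(\theta|_{\Ccal_x})$, which vanishes when $\Ccal_x$ is an exceptional component contracted by $\pi$ to a node of $\Xcal_s$. For the other term, writing $D = \sum_j a_j \Ccal_j$ and using that the class of the full special fiber is trivial (so $\Ccal_x \cdot \Ccal_x = -\sum_{j \neq x} \Ccal_x \cdot \Ccal_j$), together with the fact that each node $p$ contributes local intersection multiplicity $[\tilde{K}(p):\tilde{K}]$, one obtains
\[
\Ocal_{\Xcal'}(D) \cdot \Ccal_x = \sum_{\nu \in T_x(S(\Xcal'))} w_x(\nu)\, \lambda_{x,\nu}(F),
\]
with weight $w_x(\nu) = [\tilde{K}(p_\nu):\tilde{K}]$ for the node $p_\nu$ corresponding to the edge of $S(\Xcal')$ at $x$ in the direction $\nu$. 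This identification is the slope formula of \cite[Thm.~2.6]{katz-rabinoff-zureick} in divisorial guise and is the main technical step of the proof.

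Finally I would pass from vertices of $S(\Xcal')$ back to all points of $\Delta = S(\Xcal)$. At a point $x \in \Delta$ that is not a vertex of $S(\Xcal')$, $F$ is locally affine, so the two outgoing slopes cancel with the common weight, and the convention $\deg(\theta|_{\Ccal_x}) = 0$ makes the inequality automatic. At a vertex $y$ of $S(\Xcal')$ lying interior to an edge of $S(\Xcal)$ over a node $p$ of $\Xcal_s$, the exceptional component $\Ccal_y$ is contracted to $p$ by $\pi$, so $\deg(\theta|_{\Ccal_y}) = 0$ and both tangent directions at $y$ in $\Delta$ carry weight $[\tilde{K}(p):\tilde{K}]$, matching the weights in $S(\Xcal')$. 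At vertices of $S(\Xcal)$ the two skeletal conditions agree directly. Hence the slope inequality for all $x \in \Delta$ is equivalent to the intersection inequalities at every vertex of $S(\Xcal')$, which is equivalent to semipositivity of $\theta + dd^c f$. The main obstacle is the careful bookkeeping in the second step, aligning the arc-length parametrization of the skeleton with the local intersection multiplicities at semistable nodes so that the weights reduce to residue-field degrees as in the statement.
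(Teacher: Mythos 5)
Your argument is correct in outline but follows a genuinely different route from the paper's. The paper never refines the model over $K$: it base-changes to the completion $\C_K$ of an algebraic closure, where Bosch--L\"utkebohmert provide a strictly semistable $\Xcal'$ dominating $\Xcal$ on which $f$ is determined, checks that both the $\theta$-psh property (projection formula) and the left-hand side of \eqref{slope formula} (invariance of degrees and of slopes under base change, via Thuillier) are unchanged by this extension, and then quotes the slope formula of Katz--Rabinoff--Zureick-Brown, which is proved over algebraically closed fields. You instead stay over $K$, subdivide $S(\Xcal)$ at the break points of $F$ to obtain $\Xcal'$ via (the proof of) Proposition \ref{model functions and skeleton}(c), write $f=\varphi_D$, and convert nefness of $\pi^*\Lcal+\Ocal_{\Xcal'}(D)$ on $\Xcal'$ into the weighted slope inequality by direct intersection theory; the final bookkeeping between $T_x(S(\Xcal'))$ and $T_x(\Delta)$ is handled correctly. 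Your route yields a self-contained, base-change-free argument; the paper's route lets KRZB apply verbatim and avoids doing intersection theory on a possibly singular total space over a non-closed field. (Minor point: when you replace $F$ by $mF$ you must also replace $\theta$ by $m\theta$ for both sides of the equivalence to be preserved.)

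The one genuinely imprecise step is the claim that each node $p$ contributes local intersection multiplicity $[\tilde K(p):\tilde K]$. At a node \'etale-locally of the form $xy=\rho$, the total space has an $A_{e-1}$-type singularity when $v(\rho)$ exceeds the minimal positive value of the valuation; the reduced components are then only $\Q$-Cartier there, and their local intersection number is $[\tilde K(p):\tilde K]$ \emph{divided by} the edge length. The identity $\Ocal_{\Xcal'}(D)\cdot\Ccal_x=\sum_{\nu}w_x(\nu)\lambda_{x,\nu}(F)$ is still correct, because $\lambda_{x,\nu}(F)$ carries that same edge length in its denominator and the two factors cancel --- but this cancellation is exactly the bookkeeping you defer, and it cannot simply be outsourced to \cite[Thm.~2.6]{katz-rabinoff-zureick}, since that theorem is stated over an algebraically closed field where the weights $w_x(\nu)$ are absent. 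So either carry out this weighted computation in full over $K$ (it works), or do as the paper does and reduce to $\C_K$ first.
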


\begin{proof}
If we pass to the completion $\C_K$ of an algebraic closure of $K$, 
there is a strictly semistable model $\Xcal'$ dominating $\Xcal$ such that $f$ is determined on {$\Xcal'$}. 
This is proven in \cite[\S 7]{bosch-luetkebohmert-1985}. 
We note that the property $\theta$-psh holds if and only if the corresponding property holds after base change to $\C_K$.
This is a consequence of the projection formula in algebraic intersection theory. 
Since the degree is invariant under base change, it follows from \cite[Prop.~2.2.21]{thuillier05:_theor} 
that the left hand side of \eqref{slope formula} is invariant under base change as well. 
We conclude that we may assume that $K$ is algebraically closed and that $\Xcal = \Xcal'$, i.e. $f$ is determined on $\Xcal$. 
Then \eqref{slope formula} follows from the slope formula of Katz, Rabinoff, and Zureick-Brown \cite[Thm.~2.6]{katz-rabinoff-zureick}.
\end{proof}

The next result is crucial for the proof of Theorem \ref{thm-continuity-theta-envelope-curves}. It is well-known to the experts, but in our quite general setting we could not find a proof in the literature (a special case was proven in  \cite[B.16]{gubler-hertel}). The result is related to the fact that the retraction from a graph to a subgraph preserves subharmonicity of functions (see for example \cite[Sect.~2.5.1]{jonsson2015} for the case of trees).

\begin{prop} \label{lemma psh retraction curve}
Let $\Xcal$ be a strictly semistable model of $X$ with canonical retraction {$\tau \colon  \Xan \to S(\Xcal)$}.
Let $\theta \in \Zcal^{1,1}(X)$ be determined on $\Xcal$ and  
let $\varphi \colon X^{\an} \to \R$ be an arbitrary $\theta$-psh model function.
Then $\varphi \circ \tau \colon X^{\an} \to \R$ is a  $\theta$-psh model function with $\varphi \leq \varphi \circ \tau$.
\end{prop}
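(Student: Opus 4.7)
The strategy is to work on a strictly semistable refinement of $\Xcal$ on which $\varphi$ becomes a model function in the strong sense of Proposition \ref{model functions and skeleton}(b), and then to exploit the canonical metric-graph retraction between the two skeletons together with the slope criterion from Proposition \ref{lemma slope psh}. Concretely, pick a strictly semistable model $\Xcal'$ of $X$ dominating $\Xcal$ such that $\varphi$ is determined on $\Xcal'$; then $\theta$ is still determined on $\Xcal'$ and we may write $\varphi=F'\circ\tau'$ with $\tau'\colon \Xan\to S(\Xcal')$ the canonical retraction and $F':=\varphi|_{S(\Xcal')}$. The inclusion $S(\Xcal)\subseteq S(\Xcal')$ of metric graphs gives a retraction $r\colon S(\Xcal')\to S(\Xcal)$ with $\tau=r\circ\tau'$, and setting $F:=F'|_{S(\Xcal)}=\varphi|_{S(\Xcal)}$ we have $\varphi\circ\tau=F\circ\tau=F'\circ r\circ\tau'$. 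That $\varphi\circ\tau$ is a model function (possibly rational) is then immediate from Proposition \ref{model functions and skeleton}(a), (c) applied to $F$ after a suitable rescaling.

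For the inequality $\varphi\leq\varphi\circ\tau$, I would reduce it to showing $F'(y)\leq F'(r(y))$ for all $y\in S(\Xcal')$. Since $r$ is the identity on $S(\Xcal)$, only points $y$ lying in a connected component of $S(\Xcal')\setminus S(\Xcal)$ need to be treated; because $\Xcal'$ refines $\Xcal$ by a finite sequence of admissible blowups, the closure of each such component is a finite tree $\bar T$ in $S(\Xcal')$ rooted at a single vertex $v\in S(\Xcal)$, and every vertex of $\bar T$ other than $v$ corresponds to a component of $\Xcal'_s$ that is exceptional over $\Xcal$. The projection formula then gives $\deg(\theta|_{\Ccal_y})=0$ for each such $y$, so the $\theta$-psh criterion of Proposition \ref{lemma slope psh} applied to $F'$ at points of $\bar T\setminus\{v\}$ reduces to the pure weighted-subharmonicity inequality $\sum_\nu w_y(\nu)\lambda_{y,\nu}(F')\geq0$. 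A standard maximum-principle argument on the finite tree $\bar T$ (if $F'$ attains its maximum at a leaf or interior vertex, the slope condition there forces every outgoing slope to vanish, and the set $\{F'=\max\}$ is then clopen in $\bar T$, hence equal to $\bar T$) shows that $F'|_{\bar T}$ achieves its maximum at $v$, which gives the desired inequality $F'(y)\leq F'(v)=F'(r(y))$.

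Finally, to verify that $\varphi\circ\tau$ is $\theta$-psh, I would choose a strictly semistable refinement $\Xcal''$ of $\Xcal'$ on which $\varphi\circ\tau$ is determined (only further subdivisions of edges of $S(\Xcal')$ over $\Xcal$ are needed to accommodate breakpoints of $F$), and check the slope criterion of Proposition \ref{lemma slope psh} at each vertex $x$ of $S(\Xcal'')$. If $x\in S(\Xcal'')\setminus S(\Xcal)$, then $F'\circ r$ is locally constant at $x$ and $\deg(\theta|_{\Ccal_x})=0$, so the inequality is trivial. If $x\in S(\Xcal)$, the tangential slopes of $F'\circ r$ at $x$ agree with those of $F'$ along directions lying in $S(\Xcal)$ and vanish in every direction pointing into an attached tree, whereas the corresponding tree-direction slopes of $F'$ are non-positive (by the previous paragraph, since $F'$ is maximized at $v=x$ on each attached tree); combining this sign information with the $\theta$-psh slope inequality for $F'$ at $x$ yields the desired slope inequality for $F'\circ r$ at $x$. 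The main delicate point will be the bookkeeping of models and skeletons --- specifically, identifying each connected component of $S(\Xcal')\setminus S(\Xcal)$ as a tree rooted at a single vertex of $S(\Xcal)$ whose non-root vertices correspond to exceptional components, which is what allows the projection formula to kill the $\theta$-contributions and reduce everything to a combinatorial maximum principle on trees.
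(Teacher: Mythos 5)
Your argument is correct, and the structure (pass to a strictly semistable $\Xcal'$ dominating $\Xcal$ on which $\varphi$ is determined, then compare slope inequalities on $S(\Xcal)\subseteq S(\Xcal')$ via Proposition \ref{lemma slope psh}) matches the paper's; but you establish the key inequality $\varphi\leq\varphi\circ\tau$ by a genuinely different route. The paper deduces it analytically: $\varphi$ is a uniform limit of functions $\frac{1}{m}\log|\mathfrak a|$ for vertical fractional ideal sheaves $\mathfrak a$ on $\Xcal$ (as in [BFJ1, Prop.~5.7]), and each such function satisfies $g\leq g\circ\tau$ by Berkovich's Theorem 5.2(ii); the sign condition $\lambda_{x,\nu}(F')\leq 0$ in the directions $\nu\in T_x(\Delta')\setminus T_x(\Delta)$ is then \emph{derived from} this inequality. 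You instead run the logic in the opposite direction: you prove the sign condition (and hence the inequality) combinatorially, by a maximum principle on each tree $\bar T$ attached to $S(\Xcal)$, using that the non-root vertices of $\bar T$ correspond to components contracted by $\Xcal'\to\Xcal$, so the projection formula gives $\deg(\theta|_{\Ccal_y})=0$ there and the slope criterion degenerates to pure weighted subharmonicity. Both work; your version is more self-contained and makes transparent that it is precisely the vanishing of $\theta$ on exceptional curves that forces $F'$ to be maximized at the root, while the paper's version avoids the tree combinatorics by quoting two external results. Two points you should make explicit to close the argument: (1) the compatibility $\tau=\tau|_{S(\Xcal')}\circ\tau'$ of retractions for nested skeletons, which you need both to reduce $\varphi\leq\varphi\circ\tau$ to $F'\leq F'\circ r$ on $S(\Xcal')$ and to know that $\varphi\circ\tau$ is constant on each attached tree (the paper uses this implicitly as well); and (2) the identification of the vertices of $S(\Xcal')\setminus S(\Xcal)$ with the $\pi$-exceptional components of $\Xcal'_s$, which justifies the projection-formula step.
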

\begin{proof}
It follows from Proposition \ref{model functions and skeleton} that $\varphi \circ \tau$ is a model function. 
To check that $\varphi \circ \tau$ is $\theta$-psh, we may assume $K$ algebraically closed  as 
we have seen in the proof of Proposition \ref{lemma slope psh}. 
Moreover, we have seen that there is a strictly semistable 
model $\Xcal'$ of $X$ dominating $\Xcal$ such that $f$ is determined on $\Xcal'$. 
Then
\begin{align*}
\Delta \coloneqq S(\Xcal) \subset \Delta' \coloneqq S(\Xcal')
\end{align*}
and $\varphi \circ \tau$ is constant along edges of $\Delta'$ which are not contained in $\Delta$.
By Proposition \ref{model functions and skeleton}, 
there is a piecewise linear function $F'$ on $\Delta'$ with $\varphi = F' \circ \tau'$ 
for the canonical retraction $\tau' \colon \Xan \to \Delta'$ such that $m F'$ is integral $\Gamma$-affine for a non-zero $m \in \N$. 
Moreover, the function $F'$ is affine on the edges of $\Delta'$. 
Let $F$ be the restriction of $F'$ to $\Delta$.
The same arguments as in \cite[Prop.~5.7]{BFJ1} 
show that the $\theta$-psh function $\varphi$ is a uniform limit of functions of the form $\frac{1}{m} \log |\mathfrak{a}|$ 
with non-zero $m \in \N$ and with a vertical fractional ideal sheaf $\mathfrak a$ on $\Xcal$. 
By \cite[Thm.~5.2(ii)]{berkovich-1999}, we deduce that
$\varphi \leq \varphi \circ \tau$. 
Using the terminology introduced in Proposition \ref{lemma slope psh}, for all $x \in \Delta$ and $v \in T_x(\Delta')$ we obtain 
$\lambda_{x, \nu}(F') = \lambda_{x,\nu}(F)$ if $v \in T_x(\Delta')$ and 
$\lambda_{x, \nu}(F') \leq 0$ if $\nu  \in T_x(\Delta') \setminus T_x(\Delta)$. 
This implies 
\begin{align*}
0 &\leq  \sum_{\nu \in T_x(\Delta') } w_x(\nu) \lambda_{x,\nu}(F') + \deg(\theta |_{\Ccal_x}) 
\leq \sum_{\nu \in T_x(\Delta) } w_x(\nu) \lambda_{x,\nu}(F) + \deg(\theta |_{\Ccal_x})
\end{align*}
Here this first inequality comes from Proposition \ref{lemma slope psh}, 
since $\varphi$ is $\theta$-psh. 
Applying Proposition \ref{lemma slope psh} again, we conclude that $\varphi \circ \tau = F \circ \tau$ is $\theta$-psh.
\end{proof}

\begin{rem}
Proposition \ref{lemma psh retraction curve} does not hold for higher dimensional varieties.
We refer to the Appendix for a toric counterexample in dimension two by José Burgos and Mart\'{i}n Sombra.
\end{rem}

The following special case of model functions is crucial for the proof of 
Theorem \ref{thm-continuity-theta-envelope-curves}. 
{Especially for model functions}, we can say much more about the envelope.

\begin{prop} \label{prop psh envelope model curve}
Let $\theta$ be a closed $(1,1)$-form with nef de Rham class $\{\theta\}$
on the smooth projective curve $X$  over $K$ and let 
 $f \colon X^{\an} \to \R$ be a model function. 
We assume that $\theta$ and $f$ are determined on the strictly semistable model $\Xcal$ of $X$. 
Let $\tau \colon \Xan \to S(\Xcal)$ be the canonical retraction to the skeleton. 
Then the following properties hold:
\begin{enumerate}
\item There is $F \colon S(\Xcal) \to \R$ which is affine on each edge and with $\pshenvelope_\theta(f)= F \circ \tau$.
\item If $\Gamma \subset \Q$ and if $\theta \in \Zcal^{1,1}(X)_\Q$, then $\pshenvelope_\theta(f)$ is a
$\theta$-psh model function which is determined on $\Xcal$.
\end{enumerate}
\end{prop}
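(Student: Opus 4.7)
The plan is to reduce the problem to a finite combinatorial analysis on the skeleton $\Delta := S(\Xcal)$ by means of the retraction $\tau$, and then to exploit the convexity forced on $\theta$-psh functions along edges, together with a linear-programming style maximization. First I would show that $\pshenvelope_\theta(f)$ factors through $\tau$: since $f$ is a model function determined on $\Xcal$, Proposition \ref{model functions and skeleton}(b) (applied after a suitable rescaling by a positive integer) gives $f = f \circ \tau$; for any $\theta$-psh model function $\varphi \leq f$, Proposition \ref{lemma psh retraction curve} yields that $\varphi \circ \tau$ is also a $\theta$-psh model function with $\varphi \leq \varphi \circ \tau \leq f \circ \tau = f$, and hence may replace $\varphi$ in the supremum defining $\pshenvelope_\theta(f)$. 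Consequently $\pshenvelope_\theta(f) = F \circ \tau$ with $F := \pshenvelope_\theta(f)|_\Delta$.

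To prove (i), I would exploit convexity. By Proposition \ref{lemma slope psh}, the restriction of any $\theta$-psh model function to the interior of an edge $e$ of $\Delta$ is convex, since $\deg(\theta|_{\Ccal_x}) = 0$ at non-vertex points; hence $F$ is convex on each edge as a pointwise supremum of convex functions. Let $F_{\mathrm{aff}}$ be the function on $\Delta$ that is affine on each edge and agrees with $F$ at the vertices; convexity gives $F \leq F_{\mathrm{aff}}$. For the reverse inequality, I use Proposition \ref{max psh} to pick $\theta$-psh model functions $\varphi_n \leq f$ with $\varphi_n(v) \to F(v)$ at every vertex $v$ of $\Delta$ simultaneously, and let $\tilde \varphi_n$ denote the affine interpolation of $\varphi_n$'s vertex values on each edge of $\Delta$. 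After a possibly small downward perturbation of vertex values to ensure rationality of slopes, $\tilde \varphi_n \circ \tau$ defines a $\theta$-psh model function determined on $\Xcal$ with $\tilde\varphi_n \circ \tau \leq f$: the interior slope condition is trivial since $\tilde \varphi_n$ is affine, while at a vertex the outgoing slopes of the convex $\varphi_n$ are bounded above by the chord slopes $(\varphi_n(v')-\varphi_n(v))/\ell(e)$ of $\tilde\varphi_n$, so the vertex slope inequality for $\varphi_n$ implies that for $\tilde\varphi_n$. Being a candidate in the supremum, $\tilde\varphi_n \leq F$ on $\Delta$; letting $n \to \infty$ gives $F_{\mathrm{aff}} \leq F$, so $F = F_{\mathrm{aff}}$ is affine on each edge.

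For (ii), under $\Gamma \subset \Q$ and $\theta \in \Zcal^{1,1}(X)_\Q$, one has $\ell(e), f(v), \deg(\theta|_{\Ccal_v}) \in \Q$, and the weights $w_v(\nu)$ are positive integers. The set $\Pcal \subset \R^{V(\Delta)}$ of vectors whose affine-on-edge extension $G$ satisfies $G \leq f|_\Delta$ together with the vertex slope inequalities of Proposition \ref{lemma slope psh} is thus cut out by rational linear inequalities. A componentwise comparison (analogous to the slope comparison above) shows $\Pcal$ is closed under componentwise maximum, so $\Pcal$ admits a unique pointwise maximal element $F^* \in \Pcal$, whose entries are rational by linear programming on a rational polytope. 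Then $F^* \circ \tau$ is a $\theta$-psh model function determined on $\Xcal$ (by Proposition \ref{model functions and skeleton}(c) after clearing denominators) with $F^* \circ \tau \leq f$, hence $F^* \circ \tau \leq \pshenvelope_\theta(f) = F \circ \tau$; the argument of (i) applied here gives the reverse inequality $F \leq F^*$. Therefore $\pshenvelope_\theta(f) = F^* \circ \tau$ is a model function determined on $\Xcal$.

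The main obstacle is the technical verification in step (i) that the affine interpolations $\tilde \varphi_n$ yield actual model functions: the slopes of $\tilde \varphi_n$ on $\Delta$-edges involve quotients of elements of $\Gamma$ which need not be rational when $\Gamma \not\subset \Q$. In the rational setting (ii) this is automatic; in the general setting one perturbs the vertex values slightly downward to force rationality of slopes, which preserves all relevant inequalities (in particular the vertex psh condition only improves, as lowering a vertex value increases its outgoing slopes), and the limit argument still produces $F_{\mathrm{aff}} \leq F$.
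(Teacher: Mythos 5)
Your overall strategy is the paper's: factor the envelope through $\tau$ via Propositions \ref{model functions and skeleton} and \ref{lemma psh retraction curve}, use the slope criterion of Proposition \ref{lemma slope psh} to get convexity of $F$ on each edge, compare with the affine interpolation at the vertices, and in case (ii) run a rational linear-programming argument on the vertex values. The reduction steps and part (ii) are essentially sound (modulo their dependence on (i)).

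The genuine gap is in your resolution of what you yourself flag as the main obstacle: turning the affine interpolation $\tilde\varphi_n$ into an actual model function when $\Gamma\not\subset\Q$. You propose to perturb the vertex values slightly downward so that the chord slopes become rational. This fails for two reasons. First, it need not be achievable at all: the skeleton of a strictly semistable model of a curve can contain cycles (e.g.\ for Mumford curves), and if the lengths $\ell(e_1),\dots,\ell(e_r)$ of the edges of a cycle are $\Q$-linearly independent elements of $\Gamma$, then a function affine on these edges with all slopes $q_i$ rational satisfies $\sum_i q_i\,\ell(e_i)=0$ and hence has all slopes zero on the cycle; no perturbation of vertex values can produce a non-constant affine-on-edges function with rational slopes there. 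Second, even when achievable, the assertion that the perturbation ``only improves'' the vertex condition is wrong: lowering a neighbouring vertex $v'$ by $\delta_{v'}$ changes the outgoing slope at $v$ along $e=[v,v']$ by $(\delta_v-\delta_{v'})/\ell(e)$, which is negative when $\delta_{v'}>\delta_v$; since each edge contributes with opposite signs at its two endpoints, the total weighted change summed over all vertices is zero, so unless all $\delta_v$ coincide (in which case no slope changes and nothing is gained) the left-hand side of \eqref{slope formula} strictly decreases at some vertex, where it may already have been an equality. The paper's fix is different: keep the vertex values of $F$ fixed and replace the chord on each edge by a convex piecewise-linear function $L_\varepsilon(F)$ with rational slopes and a break point in the interior of the edge, squeezed between $F$ and its affine interpolation $L(F)$ and within $\varepsilon$ of $L(F)$. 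Because the vertex values are unchanged and $L_\varepsilon(F)\geq F$, the outgoing slopes at the vertices of $\Delta$ only increase, so \eqref{slope formula} is inherited from $F$, and at the new interior break points it holds by convexity; one then obtains $F_0=\sup_F L_\varepsilon(F)=\sup_F L(F)$ and deduces $L(F_0)\leq F_0$ by the vertexwise $\varepsilon$-approximation, which is the step your limit $n\to\infty$ implicitly requires.
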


\begin{proof} 
\emph{Step 1}.  
By Proposition \ref{regulari-prop2}(iv),
we have
\begin{align*}
\pshenvelope_\theta(f) = \pshenvelope_{\theta +dd^cf}(0) +f.
\end{align*}
Hence replacing $\theta$ by $\theta + dd^cf$ and $f$ by $0$, 
we can assume that $f=0$ by Proposition \ref{model functions and skeleton}(b).

\emph{Step 2}.
Let $\Delta \coloneqq S(\Xcal)$ denote the skeleton of $\Xcal$.
By Propositions \ref{model functions and skeleton} and \ref{lemma psh retraction curve},  
we get that 
\begin{equation}\label{eq retraction}
\pshenvelope_\theta(0) = \sup_{F \in \Acal} F \circ \tau
\end{equation}
for the set $\Acal$ of non-positive piecewise linear functions $F$ on $\Delta$ such that 
$mF$ is integral $\Gamma$-affine for some $m \in \N_{>0}$ and such that $F\circ \tau$ is $\theta$-psh.  
Note that the piecewise linear functions are not assumed to be affine on the edges of $\Delta$. 
Since $X$ is a smooth projective curve and the de Rham class $\{\theta\}$ is nef, it is clear that $\Acal$ is non-empty.
We introduce the function 
$F_0 \colon \Delta \to \R$ defined by 
\begin{align*}
F_0 := \sup_{F \in \Acal} F.
\end{align*}
By \eqref{eq retraction}, we get that $\pshenvelope_\theta(0) = F_0 \circ \tau$.
Hence we can reduce (i) to prove that $F_0$ is affine on each edge of $\Delta$.

\emph{Step 3}.
 For $F \in \Acal$, let $L(F) \colon \Delta \to \R$ be the function which is affine on the edges of $\Delta$ and 
which agrees with $F$ on the set $V$ of vertices of $\Delta$. 
As $F \leq 0$, we deduce immediately $L(F) \leq 0$. 
Since $F \circ \tau$ is $\theta$-psh, Proposition \ref{lemma slope psh} shows that $F$ is convex on each edge of $\Delta$ and hence $F \leq L(F)$.

By passing from $F$ to $L(F)$, the slopes do not decrease in the vertices and using that $F \circ \tau$ is $\theta$-psh, 
it follows from Proposition \ref{lemma slope psh} that $L(F) \circ \tau$ is $\theta$-psh as well.

The slopes of the function $L(F)$ might be non-rational. 
However, we can approximate the slopes of $L(F)$ in a rational way at any vertex 
and thus for any $\varepsilon > 0$ we find a piecewise 
linear function $L_{\varepsilon}(F)$ on $\Delta$ such that
\begin{enumerate}
\item
 $L_\varepsilon(F)$ agrees with $F$ on $V$.
\item
$L_\varepsilon(F)$ has rational slopes.
\item
$L_\varepsilon(F)$ is convex on the edges of $\Delta$.
\item
$L_\varepsilon(F) \geq F$.
\item
$\sup \vert L_\varepsilon(F) - L(F) \vert < \varepsilon$
\end{enumerate}

We claim that $L_\varepsilon(F) \in \Acal$. 
It follows from (ii) that $mL_\varepsilon(F)$ is integral $\Gamma$-affine for some $m \in \N_{>0}$. 
Since $F$ and $L(F)$ agree on $V$, it is clear from (i) and (iii) that $L_\varepsilon(F) \leq L(F) \leq 0$. 
To show that $L_\varepsilon(F) \circ \tau$ is $\theta$-psh, we use the slope criterion from Proposition \ref{lemma slope psh}. 
Note that \eqref{slope formula} is fulfilled in the interior of each edge of $\Delta$ by (iii).  
In a vertex of  $\Delta$, the inequality \eqref{slope formula} is satisfied by using the corresponding inequality for $F$, (i) and (iv). 
This proves $L_\varepsilon(F) \in \Acal$.
As a consequence we find
\begin{align} \label{linearized maximumsupII}
F_0 = \sup_{F \in \Acal} F = \sup_{F \in \Acal} L_\varepsilon(F) = \sup_{F \in \Acal} L(F).
\end{align}

\emph{Step 4}. 
We claim $F_0 = L(F_0)$.
First note that since the $\max$ of convex functions in convex, $F_0$ is convex on each edge of $\Delta$, 
thus $F_0 \leq L(F_0)$.

We pick an $\varepsilon > 0$. For any $v \in V$, there is $f_v \in \Acal$ with  $f_v(v) >  L(F_0)(v) - \varepsilon$. 
It follows from \cite[Prop.~3.12]{gubler-martin} that the maximum of two $\theta$-psh model functions is again a $\theta$-psh model function. 
Using \ref{max psh}, we conclude that $\Acal$ is closed under the operation $\max$. 
Thus $L(\max \{f_v \mid v \in V\}) \in \Acal$ is in $\varepsilon$-distance to $L(F_0)$ at every vertex of $\Delta$ and 
hence at every point of $\Delta$. 
As $\varepsilon >0$ can be chosen arbitrarily small,  
\eqref{linearized maximumsupII} yields $L(F_0) \leq F_0$ and hence we get Step 4. 
Note that Step 4 proves (i).

\emph{Step 5}. 
{In the case $\Gamma \subset \Q$, we have  $L(F) \in \Acal$ for $F \in \Acal$. 
Indeed, we note that in this special case the edges have rational lengths and $F$ takes rational values at $V$. 
We deduce from Proposition \ref{model functions and skeleton} that 
$L(F)\circ \tau$ is a model function.}
Let $\Bcal$ be the set of 
 $F \in \Acal$ such that $F$ is affine on every edge of $\Delta$. 
For $F \in \Acal$ we thus have $L(F) \in \Bcal$ 
which shows via \eqref{linearized maximumsupII} that we might restrict the $\sup$ to $\Bcal$ in the definition of $F_0$.
Recall that $V$ is the set of vertices of $\Delta$.
Since the edge lengths of $\Delta$ are rational,  
the map $\Psi \colon \Bcal \to \R^V$ defined by $\Psi(F) = (F(v))_{v\in V}$ 
identifies $\Bcal$ with the rational points of a rational polyhedron in $\R^V$ 
defined by the linear inequalities of Proposition \ref{lemma slope psh}.
Note that by affineness on the edges, we need to check the slope inequalities only at the vertices. 
If a rational linear form $\varphi$ is bounded from above on a rational polyhedron $P$, 
then $\varphi|_P$ achieves its maximum in a rational point. 
Hence there exists $G \in \Bcal$ 
such that 
\begin{align} \label{eq max}
\sum_{v\in V} G(v) = \max_{F\in \Bcal} \left( \sum_{v\in V} F(v)  \right).
\end{align}
We claim that $G= F_0 $. 
Considering $F\in \Bcal$, we get $\max(G,F) \in \Acal$ by Step 4 and hence  
$H' \coloneqq  L(\max(G,F)) \in \Bcal$. 
Hence $H'\geq G$, $H' \geq F$ and $H'\in \Bcal$.
But by \eqref{eq max} we deduce that for $v\in V$ we have $H'(v) = G(v)$.
Since functions in $\Bcal$ are determined by their values on $V$, we have $H'=G$.
Hence $G\geq F$, whence $G=F_0$. 
It follows that $P_\theta(0)= F_0 \circ \tau = G \circ \tau$ is a $\theta$-psh function proving (ii).
\end{proof}

\begin{proof}[Proof of Theorem \ref{thm-continuity-theta-envelope-curves}]
By Proposition \ref{regulari-prop2} (viii) 
it is enough to prove the continuity of $\pshenvelope_\theta(u)$. 
By the semistable reduction theorem \cite[\S 7]{bosch-luetkebohmert-1985}, 
there is a finite field extension $K'/K$ such that $X' \coloneqq X \otimes_K  K'$ has a strictly semistable model $\Xcal'$ with 
$\theta' \coloneqq q^*\theta$ determined on $\Xcal'$, where $q \colon X' \to X$ is the canonical map. 
It follows from Proposition \ref{prop psh envelope model curve} that $\pshenvelope_{\theta'}(u \circ q)$ is continuous. 
We know from Lemma \ref{lemma A4 BFJ2} that 
\begin{align*}
\pshenvelope_{\theta'}(u \circ q) = q^*(\pshenvelope_\theta(u)).
\end{align*}
By \cite[Prop.~1.3.5]{berkovich-book}, the topological space of $\Xan$ is  the quotient of $(X')^{\rm an}$ by the automorphism group of $K'/K$. 
We conclude that $\pshenvelope_\theta(u)$ is continuous.
\end{proof}

In the following, we consider an ample line bundle $L$ on the projective smooth curve $X$ over $K$.
Recall that we have defined the semipositive envelope $\pshenvelope(\| \ \|)$ 
of a continuous metric $\metr$ on $\Lan$ in \eqref{envelope-metric}.
\begin{cor} \label{cor rationality}
Assume that $\Gamma \subset \Q$. 
Let $\| \ \|$ be a model metric on $L$. 
Then $\pshenvelope(\| \ \|)$ is a semipositive model metric on $L$. 
\end{cor}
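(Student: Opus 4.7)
The first step is to recast the semipositive envelope of $\| \ \|$ as a $\theta$-psh envelope of a model function. Fix any model metric $\| \ \|_0$ on $L^{\rm an}$ and set $\theta := c_1(L, \| \ \|_0) \in \Zcal^{1,1}(X)_\Q$, whose de Rham class is the ample class of $L$ in $N^1(X)$. Writing the given model metric as $\| \ \| = e^{-u}\| \ \|_0$ for some $u \in \modelD(X)$, a metric $\| \ \|' = e^{-\varphi}\| \ \|_0$ is a semipositive model metric dominating $\| \ \|$ exactly when $\varphi \in \PSH_\modelD(X,\theta)$ satisfies $\varphi \leq u$. Taking the infimum of such $\| \ \|'$, equivalently the supremum of such $\varphi$, therefore yields
\begin{equation*}
\pshenvelope(\| \ \|) = e^{-\pshenvelope_\theta(u)}\| \ \|_0,
\end{equation*}
so the corollary reduces to showing that $\pshenvelope_\theta(u)$ is a $\theta$-psh model function on $X$.

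The plan is to invoke Proposition \ref{prop psh envelope model curve}(ii) after a finite base change and then descend. By the semistable reduction theorem \cite[\S 7]{bosch-luetkebohmert-1985} we choose a finite normal extension $K'/K$ such that $X' := X \otimes_K K'$ admits a strictly semistable $K'^\circ$-model $\Xcal'$ on which both $q^*\theta$ and $q^*u$ are determined, where $q \colon X' \to X$ is the canonical projection. The value group $\Gamma'$ of $K'$ satisfies $\Gamma' \subseteq \tfrac{1}{[K':K]}\Gamma \subset \Q$, and $q^*\theta \in \Zcal^{1,1}(X')_\Q$ has nef de Rham class. Proposition \ref{prop psh envelope model curve}(ii) then shows that $\pshenvelope_{q^*\theta}(q^*u)$ is a $q^*\theta$-psh model function determined on $\Xcal'$, and combining with Proposition \ref{lemma A4 BFJ2} gives $q^*\pshenvelope_\theta(u) = \pshenvelope_{q^*\theta}(q^*u) \in \modelD(X')$.

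It then remains to descend this model-function property along $q$. Splitting $K'/K$ into a purely inseparable step and a Galois step as in the proof of Proposition \ref{lemma A4 BFJ2}, Lemma \ref{lemma purely inseparable} handles the purely inseparable part directly via its bijection between model functions on the two sides. For the Galois step with group $G = \operatorname{Gal}(K'/K)$, the pullback $q^*\pshenvelope_\theta(u)$ is tautologically $G$-invariant, and a standard equivariant descent argument (pass to a $G$-equivariant refinement of the model on $X'$ supporting the function, then take the quotient to obtain a $K^\circ$-model of $X$) identifies $G$-invariant model functions on $X'$ with pullbacks of model functions on $X$. Thus $\pshenvelope_\theta(u)$ is itself a $\theta$-psh model function on $X$, and $\pshenvelope(\| \ \|) = e^{-\pshenvelope_\theta(u)}\| \ \|_0$ is a semipositive model metric on $L^{\rm an}$.

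The main obstacle is the Galois descent of the model-function property: the excerpt provides such descent implicitly for continuous functions (through the proofs of Theorem \ref{thm-continuity-theta-envelope-curves} and Proposition \ref{lemma A4 BFJ2}) but not for the discrete datum of being a model function. On a curve this is however routine, since $G$-equivariant refinements of projective models and their quotients can be constructed explicitly in dimension one.
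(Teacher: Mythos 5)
Your proposal is correct and follows the same essential route as the paper: the paper's proof is the one-line observation that $\pshenvelope(\metr)=\metr e^{-\pshenvelope_\theta(0)}$ for $\theta=c_1(L,\metr)$, followed by a direct appeal to Proposition \ref{prop psh envelope model curve}(ii). Your reformulation with an auxiliary metric $\metr_0$ and $u\in\modelD(X)$ is the same reduction in trivially different clothing.

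Where you diverge is that you insert a base-change-and-descent layer before invoking Proposition \ref{prop psh envelope model curve}(ii), on the grounds that a strictly semistable model of $X$ over $K^\circ$ on which $\theta$ is determined need not exist without a finite extension. This is a legitimate concern that the paper's terse proof does not address explicitly, so your version is in this respect more careful. The cost is that you must descend the property of \emph{being a model function} (not merely continuity, as in the proof of Theorem \ref{thm-continuity-theta-envelope-curves}) along a finite normal extension, and this is exactly the point you leave as a sketch. It is indeed standard for curves: the purely inseparable part is Lemma \ref{lemma purely inseparable}, and for the Galois part one can either argue as you indicate with $G$-equivariant models and quotients, or more directly use the identification of model functions with piecewise $\Q$-linear functions from Remark \ref{model functions Q linear functions} and descend $G$-locally by taking norms of the defining units $\gamma_i$. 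Either way the argument is complete in substance; the only thing I would ask you to add is one of these two concrete mechanisms in place of the phrase \emph{standard equivariant descent argument}.
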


\begin{proof}
By definition, we have
$\pshenvelope(\| \ \|) = \| \ \| e^{-\pshenvelope_\theta(0)}$ for $\theta \coloneqq c_1(L,\| \ \|)$, 
hence the claim follows from Proposition \ref{prop psh envelope model curve}.
\end{proof}

From now on, we assume that $K$ is discretely valued. 
The goal is to prove some rationality results for the non-archimedean volumes  on the line bundle $L$ of the smooth projective curve $X$ over $K$.  
Non-archimedean volumes $\vol(L, \metr_1, \metr_2)$ with respect to continuous metrics $\metr_1, \metr_2$ on $\Lan$ 
are analogues of volumes $\vol(L)$ in algebraic geometry. 
We refer to \cite[Def.~4.1.2]{BGJKM} for the precise definition. 
By the Riemann--Roch theorem,  $\vol(L) \in \Q$  in the special case of curves. 
We will show a similar result about non-archimedean volumes.

\begin{cor} \label{rational volume}
Let $K$ be a field endowed with a complete discrete valuation with value group $\Gamma \subset \Q$. 
Let $\| \ \|_1$ and $\| \ \|_2$ be two model metrics on the line bundle $L$ of the smooth projective curve $X$ over $K$.
Then $\vol(L,\| \ \|_1,\| \ \|_2) \in \Q$.
\end{cor}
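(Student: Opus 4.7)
My plan is to reduce the rationality of $\vol(L,\metr_1,\metr_2)$ to the computation of an intersection-theoretic quantity on a model of $X$, by combining the continuity of envelopes proved in this section with Corollary \ref{cor rationality}.

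First, Theorem \ref{thm-continuity-theta-envelope-curves} guarantees that, in the curve case, semipositive envelopes of continuous metrics on $\Lan$ are continuous. This continuity is exactly the input needed in \cite{BGJKM} to obtain a formula expressing the non-archimedean volume as a difference of energies of semipositive envelopes,
\[
\vol(L,\metr_1,\metr_2) = E(\pshenvelope(\metr_1))-E(\pshenvelope(\metr_2)),
\]
computed with respect to a fixed reference model metric on $L$.

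Second, since $\Gamma\subset\Q$ and the $\metr_i$ are themselves model metrics, Corollary \ref{cor rationality} yields that each envelope $\pshenvelope(\metr_i)$ is a semipositive model metric on $L$. After passing to a common dominating strictly semistable model $\Xcal$ of $X$, we can represent $\pshenvelope(\metr_1)$, $\pshenvelope(\metr_2)$, and the chosen reference by model line bundles $\KL_1,\KL_2,\KL_0$ on $\Xcal$. On a curve, the energy $E$ of a semipositive model metric reduces to a linear combination of intersection numbers of the form $\KL_i\cdot\KL_j$ computed on the arithmetic surface $\Xcal$, as can be read off from the explicit slope-integral expression for $c_1(L,\metr)$ on the skeleton $S(\Xcal)$ that underlies Propositions \ref{lemma slope psh} and \ref{prop psh envelope model curve}.

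Third, each such intersection number can be written as a finite sum $\sum_C m_C\,v_C$, where $C$ ranges over the irreducible components of $\Xcal_s$, $m_C\in\Z$ is an intersection multiplicity, and $v_C$ is a rational multiple of $v(\pi)$ for $\pi$ a uniformizer of $K$. Since $v(\pi)\in\Gamma\subset\Q$, each summand is rational, so $E(\pshenvelope(\metr_i))\in\Q$ for $i=1,2$ and the conclusion follows.

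The main obstacle is the first step: invoking the correct form of the volume/energy identity from \cite{BGJKM} and checking that its hypotheses — principally continuity of the envelopes — are satisfied in our present generality of an arbitrary discretely valued $K$ with $\Gamma\subset\Q$ (as opposed to the residue-characteristic-zero setting of \cite{BFJ1,BFJ2}). Once this is in hand, the translation of energy to intersection numbers on $\Xcal$ and the rationality of the resulting sum are routine.
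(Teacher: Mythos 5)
Your proposal follows essentially the same route as the paper's proof: continuity of the envelopes via Corollary \ref{cor rationality} (which in fact gives that $\pshenvelope(\metr_1)$ and $\pshenvelope(\metr_2)$ are semipositive \emph{model} metrics), the identity $\vol(L,\metr_1,\metr_2)=E(L,\pshenvelope(\metr_1),\pshenvelope(\metr_2))$ from \cite[Cor.~6.2.2]{BGJKM}, and rationality of the resulting $\Q$-linear combination of intersection numbers on a common model. The only step you omit is the preliminary reduction to the ample case: when $\deg(L)\leq 0$ the volume vanishes by definition, and this case must be split off since Corollary \ref{cor rationality} and the energy formula are only available for $L$ ample.
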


\begin{proof}
If $\deg(L)\leq 0$, then it is clear from the definition that $\vol(L,\| \ \|_1,\| \ \|_2)=0$. 
So we may assume that $L$ is ample.
We need the energy $E(L, \metr_1, \metr_2)$ with respect to continuous semipositive metrics 
$\metr_1, \metr_2$ on $\Lan$ introduced in \cite[Def.~2.4.4]{BGJKM}. 
By Corollary \ref{cor rationality}, the envelopes $\pshenvelope(\metr_1)$ and $\pshenvelope(\metr_2)$ are semipositive model metrics on $\Lan$. 
In particular, they are continuous and hence it follows from  \cite[Cor.~6.2.2]{BGJKM} that
\begin{align*}
\vol(L, \metr_1, \metr_2) = E(L,\pshenvelope(\metr_1),\pshenvelope(\metr_2)).
\end{align*}
In the case of semipositive model metrics associated to line bundles $\Lcal_1, \Lcal_2$ on a $\kcirc$-model $\Xcal$, 
our assumption $\Gamma \subset \Q$ yields that the energy is defined as a $\Q$-linear combination 
of intersection numbers of the line bundles $\Lcal_1, \Lcal_2$ on $\Xcal$ proving the claim.
\end{proof}

\begin{rem} \label{rem rationality}
When $\dim(X) \geq 3$, there are varieties with line bundles $L$ such that $\vol(L)$ is irrational
(see \cite[Example 2.2]{ELMN05} or \cite[Example 2.3.8]{Laz1}). 
Hence with our definition and normalization of non-archimedean volumes, we get for a model metric $\| \ \|$ that 
$\vol(L,\| \ \|, e^{\lambda}\| \ \|) = \vol(L)\lambda$ which produces irrational non-archimedean volumes.
The following natural questions remain open:
\begin{enumerate}
\item 
What happens if $X$ is a variety of 
dimension two? Are non-archimedean volumes rational?
Note that by Zariski decomposition, $\vol(L)$ is rational then  
(see for instance \cite[Cor.~2.3.22]{Laz1}).
\item If we normalize our non-archimedean volumes by $\vol(L)$, can we find an example of some model metrics $\| \ \|$ and $\| \ \|'$ with irrational 
$\vol(L,\| \ \|, \| \ \|')$?
The idea is to avoid the trivial example above.
\end{enumerate}
\end{rem}

\section{Asymptotic test ideals}\label{app-test-ideals}\label{section4}

We recall definitions and some basic properties
from the theory of generalized and asymptotic
test ideals developed in \cite[Sect.~2]{blickle-mustata-smith08}
and \cite[Sect.~3]{mustata13}.
{We refer to \cite{SchTuc} for a more comprehensive overview of the 
theory of test ideals.}
Let $X$ be a smooth variety over a perfect field $k$
of characteristic $p>0$.
Let $F\colon X\to X$ denote the Frobenius morphism which is
induced by the $p$-th power ring morphism on affine subsets.
Write
\[
\omega_{X/k}=\det\Omega_{X/{k}}=\KO_X(K_{X/k})
\] 
for some canonical divisor $K_{X/k}$ on $X$.

Let $\Da$ be an ideal in $\KO_X$ and $e\in\N_{>0}$.
There is a unique ideal $\Da^{[p^e]}$ in $\KO_X$ such that 
for every open affine $U$ in $X$ the ideal $\Da^{[p^e]}(U)$
in $\KO_X(U)$ is generated by $\{u^{p^e}\,|\,u\in \Da(U)\}$.
We have \cite[bottom of p.~44]{blickle-mustata-smith08}
\begin{equation}\label{simple-test-property-1}
\Da(U)=\{a\in \KO_X(U)\,|\,a^{p^e}\in \Da^{[p^e]}(U)\}
\end{equation}
We recall the following facts from \cite[p.~540]{mustata13}: 
There is a canonical trace map
${\rm Tr}\colon F_*(\omega_{X/k})\to \omega_{X/k}$
whose construction can be based on the Cartier isomorphism 
\cite[Thm.~(7.2) and Eq.~(7.2.3)]{katz70}. 
Mustaţă gives an explicit description of the trace map 
\cite[top of p.~540]{mustata13}.
Given $e\in\N_{>0}$ there is an
iterated trace map
${\rm Tr}^e \colon F^e_*(\omega_{X/k}) \to \omega_{X/k}$.
For an ideal $\mathfrak a$ in $\KO_X$
there exists a unique ideal  
$\mathfrak a^{[1/p^e]}$ in $\KO_X$ with 
\begin{equation}\label{mustata-trace-operation-eq1}  
{\rm Tr}^e(F^e_*(\mathfrak a \cdot \omega_{X/k})) 
= \mathfrak a^{[1/p^e]} \cdot \omega_{X/k}.
\end{equation}
This definition of $\mathfrak a^{[1/p^e]}$
is compatible with \cite[Def.~2.2]{blickle-mustata-smith08}.
Hence we have 
\begin{equation}\label{simple-test-property-2}
\bigl(\Da^{[p^{e}]}\bigr)^{[1/p^e]}=
\Da\subseteq \bigl(\Da^{[1/p^{e}]}\bigr)^{[p^e]}
\end{equation}
by 
\cite[Lemma 2.4(iv)]{blickle-mustata-smith08}.

\begin{defi}\label{def-test-exp}\cite[Def.~2.9]{blickle-mustata-smith08}
Given an ideal $\Da$ in $\KO_X$ and $\lambda\in\R_{\geq 0}$
one defines the \emph{test ideal of $\Da$ of exponent $\lambda$} to be
\[
\tau(\Da^\lambda):=\bigcup_{e\in \N_{>0}}
\Bigl(\Da^{\lceil \lambda
p^e\rceil}\Bigr)^{[1/p^e]}
\]
where given $r\in\R$ we write $\lceil r\rceil$ 
for the smallest integer $\geq r$.
\end{defi}

\begin{rem}\label{rem-def-tau}
\begin{enumerate}[itemindent = 10 mm, leftmargin =  0 mm]
\item
Observe that we have
$\tau(\Da^\lambda)=(\Da^{\lceil \lambda p^e\rceil})^{[1/p^e]}$ for large
$e\in\N$ as $X$ is noetherian.
The equality
\begin{equation}
\tau((\Da^m)^\lambda)=\tau(\Da^{\lambda m})
\end{equation}
for $m\in\N$
shows that the notation in Definition \ref{def-test-exp}
is compatible with taking powers of ideals
\cite[Cor.~2.15]{blickle-mustata-smith08}.
We have
$\tau(\Da^\lambda)\subseteq \tau(\Db^\lambda)$ for ideals
$\Da\subseteq \Db$ in $\KO_X$ \cite[Prop.~2.11(i)]{blickle-mustata-smith08}.

\item
Choose $e$ such that 
$\tau(\Da)=(\Da^{[p^e]})^{[1/p^e]}$.
For any ideal $\Db$ in $\KO_X$ such that 
$\Da^{[p^e]}\subseteq \Db^{[p^e]}$ 
we get $\Da\subseteq \Db$ from \eqref{simple-test-property-1}.
Hence \eqref{simple-test-property-2} implies
\begin{equation}\label{fundamental-inclusion-1}
\Da\subseteq \tau(\Da).
\end{equation}

\end{enumerate}
\end{rem}

Let $\Da_\bullet$ be \emph{graded sequence of ideals in $\KO_X$},
i.e.~a family $(\Da_m)_{m\in\N_{>0}}$ of ideals in $\KO_X$  such that
$\Da_m\cdot\Da_n\subseteq \Da_{m+n}$ for all $m,n\in \N_{>0}$
and $\Da_m\neq (0)$ for some $m>0$.

\begin{defi}\cite[p.~541]{mustata13}
Choose $\lambda\in \R_{\geq 0}$.
Define the \emph{asymptotic test ideal of exponent $\lambda$} as
\[
\tau(\Da_\bullet^\lambda):=
\bigcup_{m\in\N}\tau(\Da_m^{\lambda/m}).
\]
\end{defi}

\begin{rem}\label{remark-asymptotic}
\begin{enumerate} [itemindent = 10 mm, leftmargin =  0 mm]
\item
We have
$\tau(\Da^\lambda_\bullet)=\tau(\Da_m^{\lambda/m})$ for suitable $m\in\N$
which are divisible enough \cite[p.~541]{mustata13}.

\item
{ 
For all $m\in\N$ we have \cite[p.~541, l.~4]{mustata13}}
\begin{equation}\label{fundamental-inclusion-2}
\tau(\Da_m)\subseteq \tau (\Da^m_\bullet).
\end{equation} 
\item
For all $m\in\N$ we have the 
\emph{Subadditivity Property} \cite[Prop.~3.1(ii)]{mustata13}
\begin{equation}\label{subadditivity-2}
\tau(\Da_\bullet^{m\lambda})
\subseteq \tau(\Da_\bullet^\lambda)^m.
\end{equation}
\end{enumerate}
\end{rem}

\begin{defi}
Let $D$ be a divisor on $X$ with
$h^0(X,\KO_X(mD))\neq 0$ for some $m>0$.
Define the \emph{asymptotic test ideal of exponent $\lambda\in\R_{\geq 0}$
associated with $X$ and $D$ as}
\[
\tau(\lambda\cdot \|D\|):=\tau(\Da_\bullet^\lambda)
\]
where $\Da_\bullet$ denotes the graded sequence of
base ideals for $D$, i.e.~$\Da_m$ is the image of the
natural map
\[
H^0(X,\KO(mD))\otimes_k \KO_X(-mD)\to \KO_X.
\]
If $D$ is a $\Q$-divisor such that 
$h^0(X,\KO_X(mD))\neq 0$ for some positive integer 
$m$ such that $mD$ is a usual divisor then
we put $\tau(\lambda\cdot\|D\|):=\tau(\lambda/r\cdot\|rD\|)$
for some $r\in \N$ such that $rD$ has integral coefficients. 
\end{defi}

We finish with a slight generalization of Mustaţă's uniform 
generation property
\cite[Thm.~4.1]{mustata13}.
Observe that in \emph{loc.~cit.}~it is required that
the variety $X$ is projective over the ground field $k$.

\begin{theo} \label{uniform generation}
Let $R$ be a $k$-algebra of finite type
over a perfect field $k$ of characteristic $p>0$.
Let $X$ be an integral scheme of dimension $n$ 
which is projective over the spectrum of $R$ 
and smooth over $k$. 
Let $D$, $E$, and $H$ be divisors on $X$ and 
$\lambda\in\Q_{\geq 0}$
such that
\begin{enumerate}
\item
$\KO_X(H)$ is an ample, globally generated line bundle,
\item
$h^0(X,\KO_X(mD))>0$ for some $m>0$, and
\item
the $\Q$-divisor $E-\lambda D$ is nef.
\end{enumerate}
Then the sheaf
$\Ocal_X(K_{X/k}+E+dH) 
\otimes_{\Ocal_X} \tau(\lambda \cdot \|D\|)$ 
is globally generated for all $d \geq n+1$.
\end{theo}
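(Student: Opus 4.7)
I would follow Mustaţă's proof of \cite[Thm.~4.1]{mustata13}, which establishes the same statement under the stronger hypothesis that $X$ is projective over the field $k$. The only essential modification is to replace Fujita's vanishing theorem (used by Mustaţă in its classical form for varieties projective over a field) by Keeler's generalization to projective morphisms of noetherian schemes, which applies to the relative situation $X\to\Spec(R)$. All other cohomological and combinatorial inputs to Mustaţă's argument are intrinsic to the smooth $k$-variety $X$ and carry over verbatim.

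The strategy has two main steps. First, using that $\Ocal_X(H)$ is ample and globally generated, the Castelnuovo-Mumford regularity criterion (applied in the relative form over the noetherian affine base $\Spec(R)$) reduces the global generation statement for $d\geq n+1$ to the vanishing
\[
H^i\bigl(X,\Ocal_X(K_{X/k}+E+(d-i)H)\otimes_{\Ocal_X}\tau(\lambda\cdot\|D\|)\bigr)=0
\]
for every $i\geq 1$. Since $\dim X=n$ and $d\geq n+1$, only indices $1\leq i\leq n$ contribute, and for each such $i$ we have $d-i\geq 1$. The divisor $E+(d-i)H-\lambda D=(E-\lambda D)+(d-i)H$ is then the sum of a nef class and a positive integer multiple of an ample globally generated class, hence is ample with a uniform positivity as $d$ varies.

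Second, the vanishing is obtained by combining the test-ideal analogue of Nadel-type vanishing for asymptotic test ideals (in the form used in \cite{mustata13}) with Keeler's generalization of Fujita's vanishing: for a projective morphism $X\to\Spec(R)$ with $R$ noetherian, a coherent sheaf $\mathcal{F}$ on $X$, and an ample divisor $H$, there exists $m_0$ such that $H^i\bigl(X,\mathcal{F}\otimes\Ocal_X(mH+N)\bigr)=0$ for all $i\geq 1$, all $m\geq m_0$, and every nef divisor $N$ on $X$. Applied with $N=E-\lambda D$ (which is nef by hypothesis (iii)) and $m=d-i$ sufficiently large, this delivers the required vanishing uniformly in $d\geq n+1$.

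The main obstacle is verifying that Mustaţă's cohomological bookkeeping truly survives the change of base from $k$ to $R$: one must check that the passage to divisible $m$ in the definition of $\tau(\lambda\cdot\|D\|)$ via Remark \ref{remark-asymptotic}, the use of the graded system of base ideals of $D$, and the subadditivity \eqref{subadditivity-2} all interact with $H^i(X,-)$ in the same way when $X$ is merely projective over $\Spec(R)$. These properties of $\tau$ are defined intrinsically on the smooth $k$-scheme $X$ and are unchanged by regarding $X$ as projective over $R$; the only point where the ambient base plays a role is the quantitative vanishing input, which is exactly where Keeler's theorem intervenes. Once this single substitution is made, Mustaţă's argument proceeds without further changes.
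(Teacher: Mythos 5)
Your proposal matches the paper's proof: the paper likewise defers to Mustaţă's argument, making exactly the two modifications you identify, namely replacing Fujita's vanishing theorem by Keeler's relative generalization \cite[Thm.~1.5]{keeler03} and using Castelnuovo--Mumford regularity for the projective scheme $X$ over the noetherian ring $R$ as in \cite[20.4.13]{brodmann-sharp}. One small caveat: the required vanishing is not obtained by taking ``$m=d-i$ sufficiently large'' (for $d=n+1$ and $i=n$ one only has $d-i=1$); in Mustaţă's argument the sheaf involving the test ideal is exhibited as a quotient of a Frobenius push-forward $F^e_*(\Fcal_j\otimes\Ocal_X(p^e(\cdot)))$, so the ample twist grows like $p^e$ and Keeler's theorem applies for $e\gg 0$ --- but this mechanism is internal to the proof you are citing, so your substitution is the correct and complete one.
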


\begin{proof} 
We literally follow Mustaţă's proof with two modifications. 
The proof requires Mumford's theorem on Castelnuovo-Mumford 
regularity for the projective scheme $X$ over $R$  which holds 
also in this more general setting  \cite[20.4.13]{brodmann-sharp}.
Furthermore 
we replace the use of Fujita's vanishing theorem
to the sheaves 
$\Fcal_j := \Ocal_\Xcal(K_{X/k} + T_j)$, $j=1, \dots, r$
and the ample divisor $(d-i)H$
by an application of Keeler's {generalization} 
\cite[Thm.~1.5]{keeler03}.
\end{proof}

\section{{Descent for model functions}}\label{new-section-5}

Let $K$ denote a complete discretely valued field
with valuation ring $K^\circ$.
Let $R$ be a discrete valuation 
subring of $K^\circ$ whose completion is $K^\circ$.
Then $K$ is the completion of the field of fractions
$F$ of $R$.
In this section we show that all model functions on
analytifications of varieties over $K$ are already defined over $R$.

An $R$-model of a 
projective variety over $F$ is defined completely analogously 
to the complete case treated in \ref{models}.

\begin{defi}
\label{definition model function over model}
Let $X$ be a projective variety over $K$.
We say that a \emph{model function} $\varphi \colon X^\an \to \R$ 
\emph{is defined over $R$} if there exists 
a projective variety $Y$ over $F$, with an isomorphism 
$Y\otimes_F K \simeq X$, 
an $R$-model $\KY$ of $Y$, a vertical divisor $D_0$ on $\KY$ such that 
$\varphi = \frac{1}{m} \varphi_D$
 where $m \in \N_{> 0}$ and $D$ is the vertical 
 divisor on $\KY\otimes_R K^\circ$ obtained by pullback from $D_0$.
Likewise we define the notion of a 
\emph{vertical ideal sheaf defined over $R$}.
\end{defi}

Here is the announced descent result.

\begin{prop} \label{models are geometric}
Let $Y$ be a projective 
variety over $F$ and let $X \coloneqq Y \otimes_F K$. 
\begin{itemize}
 \item[(a)] 
 Any $K^\circ$-model of $X$ is dominated by the base change of a projective $R$-model of $Y$ to $K^\circ$.
 \item[(b)] 
 If  a projective $K^\circ$-model $\KX$ of $X$ dominates 
 $\KY \otimes_R K^\circ$ for a projective $R$-model $\KY$ 
 of $Y$, then $\KX \simeq \KY' \otimes_R K^\circ$ for a 
 projective $R$-model $\KY'$ of $Y$ dominating  $\KY$.
 \item[(c)] 
 Every model function on $\Xan$ is defined over $R$.
\end{itemize}
\end{prop}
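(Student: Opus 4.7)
The plan is to establish (b) first by a direct blow-up descent, then deduce (a) from (b) together with Chow's lemma, and finally derive (c) by applying the same descent to vertical divisors. The key enabling fact is that the completion map $R\to K^\circ$ induces an isomorphism $R/\pi^n R \xrightarrow{\sim} K^\circ/\pi^n K^\circ$ for every $n\geq 1$, where $\pi$ is a common uniformizer; consequently $\KY$ and $\KY\otimes_R K^\circ$ have canonically identified reductions $\KY/\pi^n$ for every $n$, and any coherent ideal on $\KY\otimes_R K^\circ$ that contains $\pi^n$ descends canonically to a coherent ideal on $\KY$ containing $\pi^n$, with the original recovered by flat base change.

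For (b), the projective birational morphism $\KX\to \KY\otimes_R K^\circ$ can be written as a blow-up along a coherent ideal $\mathcal{I}\subseteq \Ocal_{\KY\otimes_R K^\circ}$ by the standard structure theorem for projective birational morphisms. Since $\KX$ and $\KY\otimes_R K^\circ$ share the generic fiber $X$, the vanishing locus $V(\mathcal{I})$ lies in the special fiber, so Noetherianity forces $\pi^n\Ocal_{\KY\otimes_R K^\circ}\subseteq \mathcal{I}$ for some $n\gg 0$. The descent principle above yields an ideal $\mathcal{I}'$ on $\KY$ with $\mathcal{I}'\otimes_R K^\circ = \mathcal{I}$, and the blow-up $\KY'\coloneqq \mathrm{Bl}_{\mathcal{I}'}\KY$ is a projective $R$-scheme whose generic fiber is $Y$ (because $\mathcal{I}'$ becomes the unit ideal after inverting $\pi$). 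Flatness of $\KY'$ over $R$ reduces to the admissible-blow-up version of Raynaud's flattening theorem, and compatibility of blow-ups with flat base change along $R\to K^\circ$ yields $\KY'\otimes_R K^\circ \simeq \KX$.

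For (a), I first invoke Chow's lemma to dominate an arbitrary $K^\circ$-model $\KX$ by a projective $K^\circ$-model $\KX''$. Fix any projective $R$-model $\KY_0$ of $Y$, which exists since $Y$ is projective over $F$, and let $\KX'$ be the scheme-theoretic closure of $X$ inside $\KX''\times_{K^\circ}(\KY_0\otimes_R K^\circ)$. Then $\KX'$ is a projective $K^\circ$-model of $X$ dominating both factors, and applying (b) to $\KX'\to \KY_0\otimes_R K^\circ$ produces a projective $R$-model $\KY'$ of $Y$ with $\KY'\otimes_R K^\circ \simeq \KX'$, which then dominates $\KX$. For (c), any model function on $\Xan$ has the form $\tfrac{1}{m}\varphi_D$ for a vertical Cartier divisor $D$ on some projective $K^\circ$-model $\KX$; by (a) I may assume $\KX = \KY'\otimes_R K^\circ$, and the invertible ideal $\mathcal{J}$ cutting out $D$ is trivial on the generic fiber, hence contains $\pi^n$ for some $n$. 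The reduction-modulo-$\pi^n$ descent produces an ideal $\mathcal{J}'$ on $\KY'$ with $\mathcal{J}'\otimes_R K^\circ = \mathcal{J}$, and faithful flatness of $R\to K^\circ$ combined with fpqc descent of locally-free-of-rank-one modules transfers invertibility back to $\mathcal{J}'$, producing a vertical Cartier divisor $D_0$ on $\KY'$ whose pullback is $D$.

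The main technical hurdle lies in (b): one must verify that admissible blow-ups (those along vertical ideals containing a power of $\pi$) commute with base change along $R\to K^\circ$ and preserve $R$-flatness. Both verifications ultimately rest on the principle that properties of a vertical ideal containing $\pi^n$ are controlled by its reduction modulo $\pi^n$, which lives on the common reduction $\KY/\pi^n = (\KY\otimes_R K^\circ)/\pi^n$, combined with the faithful flatness of $R\to K^\circ$.
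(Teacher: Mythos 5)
Your parts (b) and (c) follow essentially the same route as the paper: write the dominating morphism $\KX\to\KY\otimes_RK^\circ$ as a blow-up along a vertical ideal containing $\pi^n$, descend that ideal through the canonical identification of $R/\pi^nR$ with $K^\circ/\pi^nK^\circ$, and conclude by flat base change of blow-ups; for (c), descend the ideal sheaf of the vertical divisor in the same way. Two small remarks there. In (c) you must first make $D$ effective (replace $D$ by $D+\div(\pi^n)$ for $n\gg0$, as the paper does) before speaking of ``the invertible ideal cutting out $D$''; as written, a general vertical Cartier divisor has only a fractional ideal, and your descent principle is stated for ideals inside $\Ocal$. In (b) no flattening theorem is needed: the blow-up of the integral scheme $\KY$ along a nonzero ideal is again integral and dominates $\Spec R$, hence is torsion-free and automatically $R$-flat.

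The one genuine weak point is the first step of (a). Chow's lemma in its standard form produces a projective modification $\KX''\to\KX$ that is an isomorphism only over \emph{some} dense open subset, which need not contain the generic fiber $X$; if it does not, then $\KX''$ is a model of a blow-up of $X$ rather than of $X$ itself, $X$ does not embed into $\KX''\times_{K^\circ}(\KY_0\otimes_RK^\circ)$, and your graph-closure step has nothing to close up. What is needed is the $X$-admissible refinement: a blow-up centered in the special fiber which is projective over $K^\circ$ and dominates the given model. This is precisely what the paper takes from L\"utkebohmert \cite[Lemma 2.2]{luetkebohmert1993} (equivalently, Raynaud--Gruson flattening): there is a vertical blow-up $\KX'\to\KY\otimes_RK^\circ$ of a fixed projective $R$-model's base change which dominates $\KX$. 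Since this $\KX'$ is already a projective model dominating $\KY\otimes_RK^\circ$, part (a) then follows directly from (b), with no need for the product-and-closure construction. With that substitution your argument for (a) closes, and the remainder of your proof matches the paper's.
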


\begin{proof} 
To prove (a), we pick any projective $R$-model $\KY$ of $Y$. 
By \cite[Lemma 2.2]{luetkebohmert1993}, there is a blowing up $\pi \colon\KX' \to \KY \otimes_R K^\circ$ 
such that $\KX'$ dominates $\KX$. Since $\pi$ is a projective 
morphism, $\KX'$ is a projective $K^\circ$-model dominating $\KX$. 
Hence (a) follows from (b). 

To prove (b), we note that the morphism $\KX \to \KY \otimes_R K^\circ$ 
is a blowing up morphism along a vertical closed subscheme $Z$ of 
$\KY \otimes_R K^\circ$ (see \cite[Thm.~8.1.24]{Liu}). 
Since the ideal sheaf of $Z$ contains a power of the 
uniformizer of $R$, we may define it over $R$ and hence 
the same is true for the blowing up morphism and for $\KX$ 
proving (b). 

To prove (c),  we may assume that the model function is associated to a vertical 
Cartier divisor $D$. Replacing $D$ by $D + \div(\lambda)$ for a suitable 
non-zero $\lambda \in R$ and using (a) and (b), we may assume that $D$ is an  effective Cartier divisor 
on a projective 
$R$-model $\KY$ of $Y$.  
As in (b), we 
see that the ideal sheaf of $D$ is defined by the ideal sheaf of a Cartier divisor $D_0$ defined 
over $R$ proving (c).
\end{proof}

\section{Resolution of singularities}\label{new-section-6}

For our applications, we need that regular projective models 
are cofinal in the categroy of models
which makes it necessary to assume 
resolution of singularities in a certain dimension.

\begin{defi} \label{resolution of singularities}
Let $k$ be a field. We say that \emph{resolution of singularities
holds over $k$ in dimension $n$} if for every quasi-projective
variety $Y$ over $k$ of dimension $n$
there exists a regular variety
$\tilde Y$ over $k$ and a projective morphism $\tilde Y\to Y$ which is an 
isomorphism over the regular locus of $Y$.
\end{defi}

To transfer results from \cite{BFJ1, BFJ2} to our context, it is 
essential to show that projective models 
are dominated by SNC-models. 
In order to this we are going to use the following assumption.

\begin{defi} \label{embedded resolution of singularities}
We say that {\it embedded  resolution of singularities  in 
dimension $m$} holds over a field $k$ if for every 
quasi-projective regular variety $Y$ over $k$ 
of dimension $m$ 
and every proper closed subset $Z$ of $Y$, there is a projective 
morphism $\pi:Y' \to Y$ of quasi-projective regular varieties 
over $k$ such that the set $\pi^{-1}(Z)$ is the support 
of a normal crossing divisor and such that $\pi$ is an 
isomorphism over $Y \setminus Z$. 
\end{defi}

Hironaka has shown that resolution of
singularities and embedded resolution of singularities
holds over a field of characteristic zero
in any dimension.
Resolution of singularities holds over 
arbitrary fields in dimension one
(Dedekind, M.~Noether, Riemann)
and in dimension two (Abhyankar, Lipman). 
Cossart and Piltant have proven 
that resolution of singularities and embedded resolution of singularities
hold in dimension three over perfect fields.

\begin{theo}[Cossart-Piltant]\label{thm-cp}
Resolution of singularities and embedded resolution of 
singularities hold in dimension three over any perfect 
field.
\end{theo}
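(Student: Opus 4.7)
The proof is the content of two long papers by Cossart and Piltant \cite{cossart-piltant2008, cossart-piltant2009} and cannot be reconstructed in a short sketch; what I can offer is the plan of attack, which extends Zariski's and Abhyankar's valuation-theoretic strategy to threefolds in positive characteristic. The overall framework is to first reduce global resolution to \emph{local uniformization}, then to prove local uniformization by induction on invariants of the valuation, and finally to deduce the embedded statement from the non-embedded one.

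The first step is the reduction to local uniformization: for every valuation $\nu$ on the function field $k(Y)$ of a quasi-projective threefold $Y$ over $k$, one seeks a projective birational model $Y' \to Y$ on which the center of $\nu$ is regular. By quasi-compactness of the Riemann--Zariski space together with the patching techniques of Abhyankar (adapted to dimension three by Cossart--Piltant), the collection of such local uniformizations glues to a global projective regular model over the regular locus of $Y$. The heart of the matter is therefore local uniformization for every valuation $\nu$; the cases of rational rank $\geq 2$ or transcendence degree $\geq 1$ in the residue extension can be handled by induction from surface resolution (classical, by Abhyankar and Lipman), and the serious new obstacle is rank one valuations of transcendence defect zero in positive characteristic, where the standard Hironaka invariants need not decrease under blowing up.

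To overcome this obstacle I would use Hironaka's characteristic polyhedron attached to the local ring $\Ocal_{Y,\text{center}(\nu)}$ and track a numerical invariant through sequences of permissible blowups along regular centers. The main technical task, which is the hard part of \cite{cossart-piltant2008, cossart-piltant2009}, is to prove that this invariant strictly drops after a carefully chosen finite sequence of blowups, a statement which in characteristic $p$ breaks into a delicate case analysis of Artin--Schreier-type wild ramification phenomena; perfectness of $k$ enters crucially to ensure that residue fields at generic points of the exceptional divisors remain separable over $k$, so that the combinatorics of the polyhedron is controlled by the leading forms. Once resolution is achieved, the embedded statement follows by applying resolution to the blowup of $Y$ along the ideal of $Z$, using Hironaka's principalization procedure which works in dimension three once one has resolution at one's disposal and the perfectness of $k$ to guarantee transversality of the exceptional centers to the accumulated normal crossings divisor.
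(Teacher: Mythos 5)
Your proposal takes the same route as the paper: the paper's entire proof is the two citations \cite[Thm.~on p.~1839]{cossart-piltant2009} and \cite[Prop.~4.1]{cossart-piltant2008}, and you correctly defer to those same references. The strategic outline you add (reduction to local uniformization, patching, the hard case of rank one valuations of transcendence defect zero, and deducing the embedded statement) is a fair description of Cossart--Piltant's method but is not part of the paper's argument, which treats the theorem as a black box.
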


\proof
This is shown in 
\cite[Thm.~on p.~1839]{cossart-piltant2009} 
and \cite[Prop.~4.1]{cossart-piltant2008}.
\qed

\section{Uniform convergence to the envelope of the zero function}
\label{section uniform convergence}\label{section5}

Let $K$ be a complete discretely valued field
of positive characteristic $p>0$.
Let $X$ be a smooth projective variety over $K$,
$L$ an ample line bundle on $X$, and
$(\Xcal,\Lcal)$ a model of $(X,L)$ over $K^\circ$.
For $m\in \N_{>0}$ let $\mathfrak{a}_m$
denote the $m$-th base ideal of $\Lcal$ as in 
\eqref{def-base-ideal-rel}.
For a scheme $B$, we recall our convention 
$B^{(1)}=\{p\in B\,|\,\dim \Ocal_{B,p}=1\}$.

\begin{assumption} \label{Geometric assumption}
There exist  
a normal affine variety $B$ over a perfect field $k$,
a codimension one point $b\in B^{(1)}$, 
a projective regular integral scheme $\Xcal_B$ over $B$, 
and line bundles $\Lcal_B$ and $\Acal_B$ over $\Xcal_B$ such 
that there exist
\begin{enumerate}
\item
a flat morphism 
$h\colon \Spec K^\circ\to \Spec \Ocal_{B,b}\to B$, 
\item 
an isomorphism
$\Xcal_B \otimes_B \Spec K^\circ \stackrel{\sim}{\to} \Xcal$, 
\item
an isomorphism $h^*\Lcal_B \stackrel{\sim}{\to} \Lcal$
over the isomorphism in {\rm (ii)}, 
\item
and an isomorphism 
$\Acal_B|_{\Xcal_{B,\eta}}\stackrel{\sim}{\to} 
\Lcal_B|_{\Xcal_{B,\eta}}$
where $\eta$ is the generic point of $B$ and
the line bundle $\Acal_B$ on $\Xcal_B$ is ample. 
\end{enumerate}
Usually we read all the isomorphisms above as identifications. 
\end{assumption}

Note that all relevant information in 
Assumption \ref{Geometric assumption} is over the 
discrete valuation ring $\Ocal_{B,b}$. 
The next remark makes this statement precise and 
gives an equivalent local way to formulate 
this assumption.

\begin{rem} \label{equivalent geometric assumption}
Suppose that $\Xcal$ and $\Lcal$ are defined over a 
subring $R$ of $\kcirc$ by a line bundle $\Lcal_R$ on 
a projective regular integral scheme $\Xcal_R$ over $R$. 
We assume furthermore that $R$ is a discrete valuation ring 
which is {\it defined geometrically by a $d$-dimensional normal 
variety $B$ over a field $k$}, i.e. there exist $b \in B^{(1)}$ 
and an isomorphism $h\colon R \stackrel{\sim}{\to} \Ocal_{B,b}$. 
We read the isomorphism $h$ as an identification. 
Then Assumption \ref{Geometric assumption} is equivalent to the 
existence of data $(R,k,B,b,h,\Xcal_R,\Lcal_R)$ as above
assuming furthermore  that the field $k$ is perfect and the restriction 
of $\Lcal_R$ to the generic fiber $\Xcal_{R,\eta}$ over $R$ extends 
to an ample line bundle $\Acal_R$  on $\Xcal_R$. 

One direction of the equivalence is clear by base change 
from $B$ to $\Spec \Ocal_{B,b}$. 
On the other hand, 
replacing $B$ by an open affine neighbourhood of $b$,
it is clear by \cite[Cor.~9.6.4]{EGA4} that  
$\Acal_R$ extends to 
an ample line bundle $\Acal_B$ on a projective integral 
scheme $\Xcal_B$ over $B$ 
and that $\Lcal_R$ extends to a line bundle $\Lcal_B$ on $\Xcal_B$.
Since the regular locus of $\Xcal_B$ is open 
\cite[Cor.~12.52]{goertz-wedhorn-1} and since the 
fiber of $\Xcal_B$ over $b$ is contained in the regular locus, 
we may assume that $\Xcal_B$ is also regular by shrinking $B$ 
again.
\end{rem}

\begin{theo} \label{relative Theorem 8.5}
Let $\theta$ be defined by the line bundle $\Lcal$. 
If the pair $(\Xcal,\Lcal)$ satisfies Assumption \ref{Geometric assumption},  
then $({m}^{-1}\log|\mathfrak{a}_m|)_{m\in\N_{>0}}$ is a 
sequence of $\theta$-psh 
model functions which converges uniformly on $X^\an$ to $\pshenvelope_\theta(0)$. 
\end{theo}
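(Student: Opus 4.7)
The plan is to adapt the proof of \cite[Thm.~8.5, Step 2]{BFJ1} by replacing every appearance of asymptotic multiplier ideals with asymptotic test ideals, using Section \ref{app-test-ideals}. Proposition \ref{regulari-prop5} already yields that each $\varphi_m \coloneqq m^{-1}\log|\mathfrak{a}_m|$ lies in $\PSH_\modelD(X,\theta)$ and that $\varphi_m \nearrow \pshenvelope_\theta(0)$ pointwise, with $\varphi_m \leq 0 \leq \pshenvelope_\theta(0)$ since $\mathfrak{a}_m \subseteq \Ocal_\Xcal$. It therefore suffices to exhibit a constant $C > 0$, independent of $m$ and of $\psi$, such that
\begin{equation*}
m\,\psi \leq \log|\mathfrak{a}_m| + C \quad \text{on } X^\an
\end{equation*}
for every $\psi \in \PSH_\modelD(X,\theta)$ with $\psi\leq 0$ and every sufficiently divisible $m$. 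Taking the supremum over $\psi$ and dividing by $m$ then gives $\pshenvelope_\theta(0) - \varphi_m \leq C/m$, yielding uniform convergence.

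First I would descend to the setting of Assumption \ref{Geometric assumption}. Writing $\psi = \ell^{-1} \log|\mathfrak{b}|$ for a vertical fractional ideal $\mathfrak{b}$ on a sufficiently high model of $X$, Proposition \ref{models are geometric} allows us to realize $\mathfrak{b}$ as obtained by base change from a vertical fractional ideal $\mathfrak{b}_B$ on a projective regular $B$-model $\Xcal_B'$ dominating $\Xcal_B$. Since $\Xcal_B'$ is regular and of finite type over the perfect field $k$, it is smooth over $k$, so the test-ideal theory of Section \ref{app-test-ideals} is available on $\Xcal_B'$. Similarly, the base ideals $\mathfrak{a}_m$ descend to base ideals of $\Lcal_B$ on $\Xcal_B$, and the $\theta$-psh condition on $\psi$ translates, via compatibility (iv) of Assumption \ref{Geometric assumption}, into the nefness on the special fiber over $b$ of the $\Q$-line bundle class $\ell\cdot c_1(\Acal_B) - [D]$, where $D$ is a vertical $\Q$-divisor on $\Xcal_B'$ cutting out $\mathfrak{b}_B$.

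The core estimate then combines subadditivity and uniform generation. By \eqref{fundamental-inclusion-1} and \eqref{fundamental-inclusion-2} one has $\mathfrak{b}_B^{\,m} \subseteq \tau(\|mD\|)$, and the subadditivity property \eqref{subadditivity-2} gives $\tau(\|mD\|) \subseteq \tau(\|D\|)^m$. Applying Theorem \ref{uniform generation} with $H$ a globally generated multiple of $\Acal_B$, with $E$ a fixed multiple of $\Acal_B$ chosen so that $E - D$ is nef (possible by the nefness above and the ampleness of $\Acal_B$), and with exponent $\lambda = 1$, I obtain that
\begin{equation*}
\Ocal_{\Xcal_B'}\bigl(K_{\Xcal_B'/k}+E+(n+1)H\bigr) \otimes \tau(\|D\|)
\end{equation*}
is globally generated over $\Spec R$. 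Taking $m$-th powers via subadditivity, combining with the previous inclusions, pulling back to $\Xcal$, and comparing with the $m$-th base ideal $\mathfrak{a}_m$ of $\Lcal$ produces sections realizing $\mathfrak{b}^m$ inside $\mathfrak{a}_m$ up to a twist by a fixed line bundle whose associated function on $X^\an$ is bounded by a constant $C$ depending only on the triple $(K_{\Xcal_B'/k}, E, H)$.

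The main obstacle is the uniformity of $C$ in $\psi$: one must ensure that the auxiliary divisor $E$, a priori depending on $\ell$ through the nefness condition $E - D$, can be absorbed into a single fixed multiple of $\Acal_B$. This is handled as in \cite[Thm.~8.5]{BFJ1}: since $\Acal_B$ is ample and $\Acal_B|_{\Xcal_{B,\eta}} \simeq \Lcal_B|_{\Xcal_{B,\eta}}$ by Assumption \ref{Geometric assumption}(iv), the discrepancy is vertical and contributes only a bounded amount to the associated function on $X^\an$. A secondary but essential point is that $\Xcal_B'$ is only projective over the finitely generated $k$-algebra $R = \Ocal_B(B)$ rather than over $k$ itself, which is precisely why we proved the enhanced Theorem \ref{uniform generation}, replacing Fujita's vanishing by Keeler's generalization.
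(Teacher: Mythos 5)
Your overall reduction (it suffices to bound $m\psi - \log|\mathfrak{a}_m|$ above by a constant uniformly over all competitors $\psi\in\PSH_\modelD(X,\theta)$ with $\psi\le 0$) is sound, but it is not the paper's route, and the execution has genuine gaps. The paper never descends a competitor to $B$: it applies the asymptotic test ideals to the graded sequence of base ideals of $\Lcal_B$ itself, setting $\fb_{B,m}=\tau(\Da_{B,\bullet}^m)$ and verifying the three properties (a) $\fa_m\subseteq\fb_m$, (b) $\fb_{ml}\subseteq\fb_m^l$, (c) $\Acal^{\otimes m_0}\otimes\Lcal^{\otimes m}\otimes\fb_m$ globally generated with $m_0$ independent of $m$; Step 2 of \cite[Thm.~8.5]{BFJ1} then runs verbatim. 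The only application of Theorem \ref{uniform generation} is with $D$ the divisor of $\Lcal_B$, $E=mD$ and $\lambda=m$, so that $E-\lambda D=0$ is trivially nef and the twist $\Ocal(K_{\Xcal_B/k}+dH)$ is one fixed bundle, absorbed into $\Acal_B^{\otimes m_0}$.

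Three points in your argument would fail as written. First, the scaling: with $\lambda=1$ and $E$ a fixed multiple of $\Acal_B$, ``taking $m$-th powers'' of the globally generated sheaf $\Ocal(K+E+(n+1)H)\otimes\tau(\|D\|)$ produces sections of $\Ocal(K+E+(n+1)H)^{\otimes m}$, which differs from $\Lcal^{\otimes m}$ by the $m$-th power of a fixed \emph{non-vertical} bundle; the error in the comparison with $\log|\fa_m|$ therefore grows linearly in $m$ and is not a constant $C$. (Moreover, if $\psi=\ell^{-1}\log|\fb|$, controlling $\fb^m$ only yields a lower bound of roughly $\ell\psi$ for $\varphi_m$, which is weaker than $\psi$ when $\psi\le 0$.) The correct scaling is $\lambda=m/\ell$ with $E$ equal to $m$ times the divisor of $\Lcal$, where the required nefness of $E-\lambda D$ is exactly the $\theta$-psh condition on $\psi$; this also removes your ``main obstacle,'' whereas your proposed fix is false: for $E=\ell\Acal_B$ the discrepancy is horizontal and unbounded in $\ell$, not ``vertical and bounded.'' Second, and more structurally, applying the test-ideal machinery to the competitor's ideal $\fb$ requires it to live on a model that is smooth over $k$. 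Proposition \ref{models are geometric} only descends models and model functions to $R$; producing a \emph{regular} model dominating the one carrying $\fb$ is Lemma \ref{higher geometric model} and needs resolution of singularities in dimension $d+n$, which is not among the hypotheses of Theorem \ref{relative Theorem 8.5}. The paper avoids this entirely by working with the base ideals of $\Lcal_B$, which live on the regular scheme $\Xcal_B$ already supplied by Assumption \ref{Geometric assumption}. (A minor further slip: the $\theta$-psh condition gives nefness of $\ell\, c_1(\Lcal_B)-[D]$ on the special fiber, not of $\ell\, c_1(\Acal_B)-[D]$; the two differ by a multiple of a vertical class.)
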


If the field $K$ has equicharacteristic zero, 
this result was proven by 
Boucksom, Favre, and Jonsson \cite[Thm.~8.5]{BFJ1} without 
Assumption \ref{Geometric assumption}. 
We will follow their 
strategy of proof replacing 
the use of multiplier ideals by the use of test ideals. 
The required results about test ideals are gathered in  
Section \ref{app-test-ideals}.

\begin{proof}
We start with the observation that we have
\begin{equation} \label{one global section}
\Gamma(\Xcal_B, \Lcal_B^{\otimes m}) \neq 0
\end{equation}
for some $m>0$.
In fact we have
\[
\Gamma(\Xcal_B, \Lcal_B^{\otimes m})\otimes_RK 
\stackrel{\sim}{\longrightarrow}
\Gamma(X, L^{\otimes m}) \neq 0
\]
by flat base change and the ampleness of $L$
for some $m>0$.

We have a cartesian diagram
\[
\xymatrix{
\Xcal \ar[d] \ar[r]^g &  \ar[d]\ar[rd] \Xcal_B \\
\Spec K^\circ \ar[r]^h & B \ar[r] & \Spec k.
}
\]
We observe that $\Xcal_B$ is a smooth variety over the perfect field 
$k$ and write 
\[
\Da_{B,m}=\mbox{Im} \bigl(H^0(\KX_B,\KL_B^{\otimes m})\otimes_k\KL_B^{\otimes -m}
\to \KO_{\KX_B}\bigr)
\]
for the $m$-th base ideal of $\Lcal_B$.
Consider the ideal $g^{-1}(\Da_{B,m})\cdot \KO_\KX$ 
in $\KO_{\KX}$ generated by $g^{-1}(\Da_{B,m})$.
We have $g^{-1}(\Da_{B,m})\cdot \KO_\KX=g^*\Da_{B,m}$
as $g$ is flat.  
Sections of $\Da_m$ are locally of the form
$s\cdot t^{-1}$ where $s\in \Gamma(\KX,\KL^{\otimes m})$
is a global section and $t$ is a local section of 
$\KL^{\otimes m}$.
Flat base change \cite[Prop.~III.9.3]{Hart} gives
\[
H^0(\Xcal, \Lcal^{\otimes m}) = H^0(\Xcal_B, \Lcal_B^{\otimes m}) 
\otimes_R K^\circ.
\] 
Hence the formation of base ideals is compatible with
base change, i.e.~we have
\begin{equation}\label{base-change-base-ideals}
\Da_m=g^{-1}(\Da_{B,m})\cdot \KO_\KX =g^*\Da_{B,m}
\end{equation}
for all $m\in\N_{> 0}$.

The family $\Da_{B,\bullet}=(\Da_{B,m})_{m>0}$ defines a 
graded sequence of ideals 
in the sense of 
Section 
 \ref{app-test-ideals}.
Let $\Db_{B,m}:=\tau(\Da_{B,\bullet}^m)$ denote the 
associated asymptotic test ideal of exponent $m$.
Motivated by \eqref{base-change-base-ideals} we define
\[
\Db_m:=g^{-1}\Db_{B,m}\cdot\KO_{\KX} = g^*\Db_{B,m}
\]
as the ideal in ${\KO_\KX}$ generated by $\Db_{B,m}.$
These ideals have the following properties:
\begin{enumerate}
\item[(a)]
We have $\fa_m \subset \fb_m$ for all 
$m\in\N_{>0}$.
\item[(b)]
We have $\fb_{ml} \subset \fb_m^l$ for all 
$l,m\in\N_{>0}$.
\item[(c)]
There {is $m_0 \geq 0$} such that 
$\Acal^{\otimes m_0}  \otimes \Lcal^{\otimes m} \otimes \mathfrak{b}_m$ 
is globally generated for all {$m>0$.}
\end{enumerate}
Properties (a) and (b) follow from the corresponding
properties of 
$\fa_{B,m}$ and $\fb_{B,m}$ 
 mentioned 
in \eqref{fundamental-inclusion-1},
\eqref{fundamental-inclusion-2}, and \eqref{subadditivity-2} 
if we observe \eqref{base-change-base-ideals}.

Property (c) is a consequence of the generalization of 
Mustaţă's uniform generation property given in 
Theorem \ref{uniform generation}.
Write $\KL_B=\KO(D)$ for some divisor $D$ on $\KX_B$
and choose a divisor $H$ on $\KX_B$ such that
$\KO(H)$ is ample and globally generated.
Fix $d>\dim \KX_B$ and a canonical divisor $K_{\KX_B/k}$
on the smooth $k$-variety $\KX_B$.
As $\Acal_B$ is ample we find some $m_0\in\N$ such 
that $\Acal_B^{\otimes m_0} \otimes \Ocal(-K_{X/k} - dH)$ 
is globally generated.
Given $m\in \N_{>0}$ we put $E:=mD$.
Since $\Lcal_B$ satisfies \eqref{one global section}, 
for any $m\in \N_{>0}$ we may use
 $E:=mD$ and $\lambda := m$ in 
Theorem \ref{uniform generation} to see that the sheaf
\begin{align*}
\Ocal (K_{\KX_B/k} +  dH) \otimes \Lcal_B^{\otimes m}
\otimes\mathfrak{b}_{B,m} 
\end{align*} 
is globally generated.
As a consequence, our choice of $m_0$ implies that
$\Acal_B^{\otimes m_0}  \otimes \Lcal_B^{\otimes m} 
\otimes \mathfrak{b}_{B,m}$ is globally
generated.  
Base change to $K^\circ$ 
proves (c).

{Now we follow the proof of \cite[Thm.~8.5]{BFJ1}. 
Step 1 of \emph{loc.~cit.}~holds not only on quasi-monomial 
points of $\Xan$, but pointwise on the whole $\Xan$ using 
Proposition \ref{regulari-prop5} and our different 
definition of $P_\theta(0)$. 
Then Step 2 of \emph{loc.~cit.}~works in our setting  
using properties (a), (b), and (c) above. 
The only difference is that all inequalities hold 
immediately on $\Xan$ and not only on the quasi-monomial 
points of $\Xan$.}
\end{proof}

\begin{cor} \label{zero envelope}
Let $X$ be a smooth $n$-dimensional projective variety 
over $K$ with a closed $(1,1)$-form $\theta$. 
Let $\Lcal$ be a line bundle on a  $\kcirc$-model $\Xcal$ 
of $X$ defining $\theta$ and with $L=\Lcal|_X$ ample. 
We assume that $(\Xcal,\Lcal)$ is the base change of 
$(\Xcal_R, \Lcal_R)$ for a line bundle $\Lcal_R$ of a 
projective integral scheme $\Xcal_R$ over a subring 
$R$ of $\kcirc$ and that $R$ is a discrete valuation 
ring defined geometrically by a $d$-dimensional normal 
variety $B$ over a perfect field $k$
(as in Remark \ref{equivalent geometric assumption}). 
If resolution of singularities holds over $k$ in 
dimension $d+n$, then $\pshenvelope_\theta(0)$
is a uniform limit of $\theta$-psh 
model functions and hence $\pshenvelope_\theta(0)$ 
is continuous on $\Xan$.
\end{cor}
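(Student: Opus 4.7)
The plan is to deduce the corollary from Theorem \ref{relative Theorem 8.5} by replacing the model $(\Xcal, \Lcal)$ with a suitable smooth higher model while keeping the curvature form $\theta$ unchanged. Concretely, if $\pi \colon \tilde\Xcal \to \Xcal$ is a projective birational morphism of $K^\circ$-models of $X$ and $\tilde\Lcal = \pi^*\Lcal$, then the model metric on $\tilde\Lcal$ has the same curvature form $\theta$, and by Proposition \ref{regulari-prop5} the base-ideal functions $\tilde\varphi_m \coloneqq m^{-1}\log|\tilde{\fa}_m|$ associated to $(\tilde\Xcal, \tilde\Lcal)$ are $\theta$-psh model functions on $\Xan$ converging pointwise to $\pshenvelope_\theta(0)$; uniform convergence of this sequence will thus yield the conclusion.

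First, I would invoke the second formulation in Remark \ref{equivalent geometric assumption}: after shrinking $B$ to a sufficiently small affine open neighborhood $B'$ of $b$, the data $(\Xcal_R, \Lcal_R)$ extends to a projective integral scheme $\Xcal_{B'} \to B'$ together with a line bundle $\Lcal_{B'}$ such that base change along $\Spec R \to B'$ recovers $(\Xcal_R, \Lcal_R)$. Because $X$ is smooth over $K$ and the field extension $F = k(B) \hookrightarrow K$ is faithfully flat, faithfully flat descent of smoothness forces the generic fiber $\Xcal_{B',\eta}$ to be smooth over $F$; in particular $\Xcal_{B'}$ is regular along its entire generic fiber. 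Since $\dim \Xcal_{B'} = d + n$, the assumed resolution of singularities over $k$ in dimension $d+n$ now yields a projective birational morphism $\pi \colon \tilde{\Xcal}_{B'} \to \Xcal_{B'}$ which is an isomorphism over the regular locus of $\Xcal_{B'}$ (in particular over the whole generic fiber over $B'$) and whose source $\tilde{\Xcal}_{B'}$ is regular, hence smooth over $k$ since $k$ is perfect. Pulling back along $\Spec K^\circ \to B'$ then produces a $K^\circ$-model $\tilde\Xcal$ of $X$ dominating $\Xcal$, equipped with the pulled-back line bundle $\tilde\Lcal$.

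It remains to verify Assumption \ref{Geometric assumption} for the pair $(\tilde{\Xcal}_{B'}, \tilde{\Lcal}_{B'})$ together with the given flat morphism to $B$. Conditions (i)--(iii) of the assumption are immediate from the construction. The main obstacle I anticipate is condition (iv): exhibiting an ample line bundle $\Acal_{B'}$ on $\tilde{\Xcal}_{B'}$ whose restriction to the generic fiber is isomorphic to $\tilde{\Lcal}_{B'}|_\eta = L_F$. Since $\tilde{\Xcal}_{B'}$ is projective over the affine base $B'$ it admits an absolutely ample line bundle $\Hcal$, and since $L_F$ is ample on the smooth generic fiber $\Xcal_{B',\eta}$, one can combine $\Hcal$ with a suitable twist of $\tilde{\Lcal}_{B'}$ by a vertical Cartier divisor---which does not affect the restriction to $\Xcal_{B',\eta}$---to manufacture $\Acal_{B'}$, possibly after further shrinking of $B'$ and using openness of amplitude on the base. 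Once Assumption \ref{Geometric assumption} is verified, Theorem \ref{relative Theorem 8.5} applied to $(\tilde{\Xcal},\tilde{\Lcal})$ gives uniform convergence of the sequence $(\tilde\varphi_m)$ to $\pshenvelope_\theta(0)$ on $\Xan$, producing the desired uniform approximation of $\pshenvelope_\theta(0)$ by $\theta$-psh model functions and hence its continuity.
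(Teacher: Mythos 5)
Your overall strategy coincides with the paper's: pass to a geometric model, resolve singularities over $k$, verify Assumption \ref{Geometric assumption}, and apply Theorem \ref{relative Theorem 8.5} (the paper additionally invokes Proposition \ref{regulari-prop2}(vii) to allow replacing $\theta$ by a positive multiple, which you will also need since only a power of $L$ can be expected to extend ample). You have correctly located the hard point, namely condition (iv): producing an ample $\Acal$ on the resolved model restricting to (a power of) $L$ on the generic fiber. But your proposed way around it does not work. ``Openness of amplitude on the base'' is useless here: the open locus of $B'$ over which $\tilde\Lcal_{B'}$ is relatively ample contains the generic point but in general \emph{not} the point $b$ --- indeed $\tilde\Lcal=\pi^*\Lcal$ has degree zero on every curve in the special fiber contracted by $\pi$, and already $\Lcal$ itself is not assumed ample on $\Xcal_s$ --- and shrinking $B'$ to that locus would delete $b$ and with it the entire model over $\kcirc$. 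Nor is it true that some $\tilde\Lcal^{\otimes m}\otimes\Ocal(V)$ with $V$ vertical is ample: the vertical divisors form a small lattice and there is no argument that it contains a suitable correction; and if instead you mix in powers of an auxiliary ample $\Hcal$, you change the restriction to the generic fiber and lose condition (iv).

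What is actually needed is the content of Lemma \ref{higher geometric model}, whose order of operations is the reverse of yours: one first arranges, \emph{before} resolving, that a power of $L$ extends to an ample line bundle $\Hcal_R$ on the (possibly singular) model --- this uses L\"utkebohmert's result that any two models are dominated by a blow-up in a vertical ideal, so one can start from the closure of a projectively embedded $X$ and twist by the vertical $\Ocal(1)$ of that blow-up --- and second replaces the model by a \emph{semi-factorial} one via P\'epin's theorem \cite[Thm.~3.1]{pepin2013}. Only then does one resolve: the resolution $\varphi_R\colon\Xcal_R'\to\Xcal_R$ carries some relatively ample $\Ocal_{\Xcal_R'/\Xcal_R}(1)$, so $\varphi_R^*(\Hcal_R^{\otimes m})\otimes\Ocal_{\Xcal_R'/\Xcal_R}(1)$ is ample, but its restriction $F$ to the generic fiber differs from the desired power of $L$ by the a priori arbitrary line bundle $\Ocal_{\Xcal_R'/\Xcal_R}(1)|_{X'}$; semi-factoriality of $\Xcal_R$ is exactly what lets one extend $F$ to a line bundle $\Fcal$ on $\Xcal_R$ and replace $\Ocal_{\Xcal_R'/\Xcal_R}(1)$ by $\Ocal_{\Xcal_R'/\Xcal_R}(1)\otimes\varphi_R^*(\Fcal^{-1})$, which is still relatively ample (being a twist by a pullback) and now restricts trivially. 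Without these two preparatory steps and the final correction, condition (iv) of Assumption \ref{Geometric assumption} remains unverified and the appeal to Theorem \ref{relative Theorem 8.5} is not justified.
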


\begin{proof}
It follows from our assumptions that $X$ is base change 
of the generic fiber $\Xcal_{R,\eta}$ 
of $\Xcal_R/R$ to $K$.  
Since $X$ is smooth, we conclude that $\Xcal_{R,\eta}$ 
is smooth as well \cite[Cor.~17.7.3]{EGA4}.
By Proposition \ref{regulari-prop2}(vii), it is enough to prove the claim for any positive multiple of $\theta$.  
Using this and Lemma \ref{higher geometric model} below, we see that by passing to dominant models, we may assume that $\Xcal_R$ is {\it regular} and that 
the restriction of $\Lcal_R$ to  $\Xcal_{R,\eta}$ extends to an ample line bundle on $\Xcal_R$. 
By Remark \ref{equivalent geometric assumption}, these conditions are equivalent to Assumption \ref{Geometric assumption} and hence
 the claim follows from Theorem \ref{relative Theorem 8.5}.
\end{proof}

\begin{lemma} \label{higher geometric model}
Let $R$ be a discrete valuation ring which is defined geometrically by a $d$-dimensional normal variety $B$ over the field $k$
(as in Remark \ref{equivalent geometric assumption}). 
Let $\Xcal_R$ be a projective integral scheme over $R$ 
with $n$-dimensional regular generic fiber $X' \coloneqq \Xcal_{R,\eta}$. 
We assume that resolution of singularities holds over $k$ 
in dimension $d+n$. 
Then for any ample line bundle $L'$ on $X'$, there 
exists $m\in \N_{>0}$ and an ample extension 
$\Lcal_R'$ of $(L')^{\otimes m}$ to a regular  
$R$-model $\Xcal_R'$ of $X'$ with a projective 
morphism $\Xcal_R' \to \Xcal_R$ over $R$ extending 
the identity on $X'$.
\end{lemma}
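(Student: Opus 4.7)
The plan is to spread the model $\Xcal_R$ out to a projective family over an affine open neighborhood $B \ni b$ in the geometric base, apply the assumed resolution of singularities in dimension $d+n$ there, and then base change the resulting regular model back to $\Spec R$. The main technical obstacle is producing an \emph{ample} extension of a power of $L'$ on the resolved model, which in turn requires extending suitable line bundles from $X'$ to a neighborhood of $b$; this is where Pépin's theorem on semi-factorial models intervenes. I would begin by replacing $\Xcal_R$ with a dominating proper flat projective model $\Xcal_R''$ which is semi-factorial, so that the restriction $\Pic(\Xcal_R'') \to \Pic(X')$ is surjective.

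Choose $m_0 \in \N_{>0}$ so that $(L')^{\otimes m_0}$ is very ample on $X'$, use semi-factoriality to extend $(L')^{\otimes m_0}$ to a line bundle $\Lcal_R''$ on $\Xcal_R''$, and spread $(\Xcal_R'', \Lcal_R'')$ out over a sufficiently small affine open $B \ni b$ to a projective integral scheme $\Xcal_B$ with a line bundle $\Lcal_B$ restricting to $(L')^{\otimes m_0}$ on the generic fiber $X'$. Using that global sections of $\Lcal_R''$ lift from $X'$ to $\Xcal_R''$ (possibly after enlarging $m_0$ so that cohomology-and-base-change behaves well), the very ample embedding $X' \hookrightarrow \Pbb^N_{k(B)}$ spreads out to a closed immersion $\Xcal_B \hookrightarrow \Pbb^N_B$ realizing $\Lcal_B$ as $\Ocal(1)|_{\Xcal_B}$; since $B$ is affine, $\Lcal_B$ is then ample on $\Xcal_B$.

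Now $\Xcal_B$ is an integral quasi-projective $k$-variety of dimension $d+n$, so the resolution hypothesis yields a projective morphism $\pi \colon \widetilde{\Xcal}_B \to \Xcal_B$ from a regular $\widetilde{\Xcal}_B$ which is an isomorphism over the regular locus of $\Xcal_B$. Since $\Ocal_{\Xcal_B,y} = \Ocal_{X',y}$ at every $y \in X'$ (the generic point $\eta_B$ has local ring $k(B)$, so localization is trivial), the regular fiber $X'$ lies inside the regular locus; hence $\pi$ is an isomorphism over $X'$ and the generic fiber of $\widetilde{\Xcal}_B \to B$ is $X'$. Choose a $\pi$-ample line bundle $H$ on $\widetilde{\Xcal}_B$, available since $\pi$ is projective. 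Invoking semi-factoriality again, extend the line bundle $(H|_{X'})^{-1}$ on $X'$ to a line bundle $N_B$ on $\Xcal_B$ after possibly shrinking $B$; then $H' := H \otimes \pi^* N_B$ is still $\pi$-ample and satisfies $H'|_{X'} = \Ocal_{X'}$. By the standard criterion that the tensor of a pullback of an ample line bundle and a $\pi$-ample one is ample for sufficiently high exponent, $\widetilde{\Lcal}_B := \pi^* \Lcal_B^{\otimes m} \otimes H'$ is ample on $\widetilde{\Xcal}_B$ for $m \gg 0$ and restricts to $(L')^{\otimes m m_0}$ on $X'$.

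Finally, base changing along the flat localization $\Spec R \to B$ produces $\widetilde{\Xcal}_R := \widetilde{\Xcal}_B \times_B \Spec R$ with pullback line bundle $\widetilde{\Lcal}_R$. Regularity is preserved because the base change leaves the local rings at points lying over $\{\eta_B, b\}$ unchanged, and ampleness is preserved because the relative ampleness of $\widetilde{\Lcal}_B$ over the affine $B$ descends to the base change to the affine scheme $\Spec R$, where it coincides with ampleness. The composition $\widetilde{\Xcal}_R \to \Xcal_R'' \to \Xcal_R$ is then a projective morphism extending the identity on $X'$, yielding the required $\Xcal_R' := \widetilde{\Xcal}_R$, $\Lcal_R' := \widetilde{\Lcal}_R$, and exponent $m m_0$. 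The main obstacle throughout is the twofold use of Pépin's theorem: without the semi-factoriality it provides, the required extensions of $(L')^{\otimes m_0}$ and of $(H|_{X'})^{-1}$ across a neighborhood of $b$ need not exist.
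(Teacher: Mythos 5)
Your overall architecture matches the paper's: pass to a semi-factorial model via P\'epin, spread out over an affine neighbourhood of $b$ in $B$, resolve using the hypothesis in dimension $d+n$, base change back to $\Spec R$, and correct the unwanted twist coming from the relative $\Ocal(1)$ by extending its inverse from $X'$ using semi-factoriality. That last correction step is exactly the paper's Step 3. However, there is a genuine gap earlier, at the point where you claim that the extension $\Lcal_B$ of $(L')^{\otimes m_0}$ is ample.

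Semi-factoriality only guarantees that \emph{some} line bundle on $\Xcal_R''$ restricts to $(L')^{\otimes m_0}$; this extension is well defined only up to twisting by vertical divisors, and a general such twist is not ample (it need not even be nef on the special fiber). Your attempted justification --- that the very ample embedding $X'\hookrightarrow \Pbb^N$ spreads out to a closed immersion $\Xcal_B\hookrightarrow\Pbb^N_B$ realizing $\Lcal_B$ as $\Ocal(1)$ --- does not follow: lifting the defining sections from $X'$ to $\Xcal_R''$ (even after clearing denominators over the DVR $R$) only yields a rational map whose base locus may meet the special fiber, and the scheme-theoretic closure of $X'$ in $\Pbb^N_R$ is in general a \emph{different} model of $X'$ that neither dominates nor is dominated by $\Xcal_R''$. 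Everything downstream (the ampleness of $\pi^*\Lcal_B^{\otimes m}\otimes H'$ for $m\gg 0$) depends on $\Lcal_B$ being ample, so the gap is not cosmetic. The paper closes it before invoking P\'epin: one first takes the closure model $\KY_R\subseteq\Pbb^N_R$, on which $\Ocal(1)$ is an ample extension of a power of $L'$, and then uses L\"utkebohmert's lemma to find a blow-up $\pi\colon\mathscr Z_R\to\KY_R$ centered in the special fiber that dominates $\Xcal_R$; since the center is vertical, $\pi^*(\Hcal^{\otimes\ell})\otimes\Ocal_{\mathscr Z_R/\KY_R}(1)$ is ample for $\ell\gg0$ and still restricts to a power of $L'$ on $X'$. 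The same twisting trick is then used to carry an ample extension through P\'epin's blow-up and through the resolution. If you insert this step (or an equivalent mechanism for producing an \emph{ample} extension on a model dominating $\Xcal_R$), the rest of your argument goes through.
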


\begin{proof}
The proof proceeds in three steps. 
First, we use a result of L\"utkebohmert about 
vertical blowing ups to show that $L'$ may be assumed 
to extend to an ample line bundle $\Hcal$ on $\Xcal_R$. 
In a second step, we show that  $\Xcal_R$ may be  
also assumed to be semi-factorial by a theorem of P\'epin. 
In a third step, we use resolution of singularities to 
construct our desired regular model $\Xcal_R'$.

\emph{Step 1:} 
Replacing $L'$ by a positive tensor power, 
we may assume that $L'$ has an ample extension $\Hcal_R$ to a projective $R$-model   $\KY_R$. 
There is a blow up $\pi:\mathscr Z_R \to \KY_R$ in an ideal sheaf $\Jcal$ supported in the special fiber of $\KY_R$ 
such that the identity on $X'$ extends to a morphism $\mathscr Z_R \to \Xcal_R$ \cite[Lemma 2.2]{luetkebohmert1993}. 
Then $\pi^{-1}(\Jcal)=\Ocal_{\mathscr Z_R/\KY_R}(1)$ and 
hence there is $\ell \in \N_{>0}$ such that 
$\pi^*(\Hcal^{\otimes \ell}) \otimes \Ocal_{\mathscr Z_R/\KY_R}(1)$ is ample \cite[Prop.~II.7.10]{Hart}. 
We conclude that by replacing $\Xcal_R$ by $\mathscr Z_R$ and by passing to a positive tensor power of $L'$, 
we may assume that $L'$ has an ample extension $\Hcal_R$ to $\Xcal_R$. This completes the first step.

\emph{Step 2:} 
By a result of P\'epin \cite[Thm.~3.1]{pepin2013}, 
there is a a blowing-up morphism $\pi' \colon \mathscr Z_R' \to \Xcal_R$ centered in the special fiber of $\Xcal_R$ 
such that $\mathscr Z_R'$ is semi-factorial.
The latter means that every line bundle on the generic fiber $\mathscr Z_{R,\eta}'$ of $\mathscr Z_R'$ over $R$ 
extends to a line bundle on $\mathscr Z_R'$. Similarly as in the first step, 
we may assume that a positive tensor power of $L'$ extends to an ample line bundle on $\mathscr Z_R'$. 
Replacing $\Xcal_R$ by $\mathscr Z_R'$ and $L'$ by this positive tensor power, we get the second step.

\emph{Step 3:} 
We may assume that $B$ is affine. 
Using $R = \Ocal_{B,b}$ for some $b \in B^{(1)}$, it is clear that $\Xcal_R$ extends to a projective integral scheme $\Xcal_B$ over $B$. 
By using resolution of singularities over $k$ in dimension $d+n$, there is a regular integral scheme $\Xcal_B'$ and 
a projective morphism $\varphi_B \colon \Xcal_B' \to \Xcal_B$ which is an isomorphism over the regular locus of $\Xcal_B$. 
Since $X'$ is contained in the regular locus of $\Xcal_B$, 
we conclude that $\varphi_B$ maps the generic fiber $\Xcal_{B,\eta}'$ of $\Xcal_B'$ over $B$ isomorphically onto $X'=\Xcal_{B,\eta}$. 
As usual, we read this isomorphism as an identification. 
Then we get an induced projective morphism
$$\varphi_R:\Xcal_R' \coloneqq \Xcal_B' \times_{B} {\Spec (R)} \longrightarrow \Xcal_R$$ 
extending the identity on $X'$. 
The same argument as in the first step gives $m \in \N_{>0}$ such that 
$$\Lcal_R' \coloneqq \varphi_R^*(\Hcal_R^{\otimes m}) \otimes \Ocal_{\Xcal_R'/\Xcal_R}(1)$$
 is an ample line bundle on $\Xcal_R'$. 
Let $F$ be the restriction of $\Ocal_{\Xcal_R'/\Xcal_R}(1)$ to $\Xcal_{R,\eta}' = \Xcal_{R,\eta}=X'$. 
Then $\Lcal_R'$ is a model of $(L')^{\otimes m} \otimes F$. 
To prove the lemma, we have to ensure that  $F$ may be assumed to be $\Ocal_{X'}$. 
To do so, we use that $\Xcal_R$ is semi-factorial to extend $F$ to a line bundle $\Fcal_B$ on $\Xcal_R$. 
Then we may replace $\Ocal_{\Xcal_R'/\Xcal_R}(1)$ by  $\Ocal_{\Xcal_R'/\Xcal_R}(1) \otimes \varphi_R^*(\Fcal^{-1})$ to deduce the claim.
\end{proof}

\section{Continuity of the envelope}\label{section6}

We present consequences of our application of
test ideals in the last subsection under the assumption that we
have resolution of singularities.
As 
in Section \ref{section uniform convergence}
let $K$ be a complete discretely 
valued field of positive characteristic $p>0$.
Let $X$ be a smooth projective variety over $K$ of dimension $n$.
Consider $\theta \in \Zcal^{1,1}(X)$
with ample de Rham class $\{\theta\}\in N^1(X)$.

{
\begin{defi}
\label{definition geometric origin generic}\label{defi dagger}\label{geometric origin}
We say that $X$ is 
\emph{of geometric origin
from a $d$-dimensional family over a field $k$} if
there exist 
a normal $d$-dimensional variety $B$ over $k$, 
a point $b\in B^{(1)}$ of codimension one
and a projective variety $Y$ over $K'= k(B)$ such that
\begin{enumerate}
\item
there exists an isomorphism $\widehat\KO_{B,b}\stackrel{\sim}{\to} K^\circ$ 
of rings where $\widehat\KO_{B,b}$ denotes the completion of the
discrete valuation ring $R \coloneqq \KO_{B,b}$,
\item 
an isomorphism
$Y \otimes_{K'} K \simeq X$ over $K$ 
with $K' \to K$ induced by (i).
\end{enumerate}
Usually, we read these isomorphisms as identifications. 
Moreover, if $L$ is a line bundle (resp. if $\theta$ is a closed $(1,1)$-form) on $X$, we say that $(X,L)$ 
(resp. $(X,\theta)$) is 
\emph{of geometric origin
from a $d$-dimensional family over a field $k$} if  the above conditions are satisfied and if we can also find 
a line bundle $L'$ on $Y$ inducing $L$ by the base change  $K/K'$  
(resp. a line bundle $\Lcal_R$ on an $R$-model  $\Xcal_R$ of $Y$ inducing $\theta$ by the base change  $\kcirc/R$).
\end{defi}}

We can now formulate our main result about the 
continuity of the envelope:

\begin{theo}\label{thm-continuity-theta-envelope}
Let $X$ be a smooth $n$-dimensional projective variety over $K$ of geometric origin
from a $d$-dimensional family over a perfect field $k$.
Assume that resolution of singularities holds over $k$ 
in dimension $d+n$.
If $\theta$ is a closed $(1,1)$-form on $X$ with ample de Rham class $\{\theta\}$ and 
if $u \in C^0(\Xan)$, then $\pshenvelope_\theta(u)$ is 
a uniform limit of $\theta$-psh model functions and 
thus $\pshenvelope_\theta(u)$ is  continuous on $\Xan$. 
\end{theo}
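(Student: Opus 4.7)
The plan is to reduce the theorem to the geometric-origin case covered by Corollary~\ref{zero envelope}, using the envelope properties of Proposition~\ref{regulari-prop2}, the descent result of Proposition~\ref{models are geometric}(c), and the $dd^c$-lemma (saying that two closed $(1,1)$-forms with the same de Rham class differ by $dd^c$ of a model function). Since $\{\theta\}$ is ample, $P_\theta(u) \not\equiv -\infty$, and by Proposition~\ref{regulari-prop2}(viii) the uniform-limit conclusion is equivalent to the continuity of $P_\theta(u)$; we focus on the latter.

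Since model functions are dense in $C^0(\Xan)$, Proposition~\ref{regulari-prop2}(v) reduces us to the case where $u$ is a model function. Proposition~\ref{regulari-prop2}(iv) with $v = u$ then gives $P_\theta(u) = P_{\theta + dd^c u}(0) + u$, so it suffices to show continuity of $P_{\theta'}(0)$ for $\theta' = \theta + dd^c u \in \Zcal^{1,1}(X)$ with ample de Rham class. Approximating $\theta'$ in $N^1(\Xcal/S)$ for a model $\Xcal$ on which $\theta'$ is determined by rational classes whose de Rham class remains ample (by openness of the ample cone), Proposition~\ref{regulari-prop2}(vi) lets us further assume $\theta' \in \Zcal^{1,1}(X)_\Q$; rescaling via Proposition~\ref{regulari-prop2}(vii) we may write $\theta' = c_1(\Lcal', \metr_{\Lcal'})$ for a line bundle $\Lcal'$ on a $\kcirc$-model of $X$ with $L' := \Lcal'|_X$ ample.

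To finish, we descend to $R$. Using the geometric-origin setup $X = Y \otimes_{K'} K$ together with Remark~\ref{equivalent geometric assumption}, fix an $R$-model $\Xcal_R$ of $Y$ and a line bundle $\Lcal_{0,R}$ on $\Xcal_R$ (after possibly passing to a higher regular $R$-model via Lemma~\ref{higher geometric model} and to a positive tensor power of $L'$, which Proposition~\ref{regulari-prop2}(vii) permits) whose restriction $L_0$ to $X$ satisfies $\{L_0\} = \{L'\}$ in $N^1(X)$. Setting $\theta_0 = c_1(\Lcal_0, \metr_{\Lcal_0})$ with $\Lcal_0 = \Lcal_{0,R} \otimes_R \kcirc$, the $dd^c$-lemma yields a model function $\varphi$ with $\theta' - \theta_0 = dd^c \varphi$. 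By Proposition~\ref{models are geometric}(c), $\varphi$ is defined over $R$, so the vertical divisor on a suitable higher $R$-model descending $\varphi$ combines with $\Lcal_{0,R}$ to represent $\theta'$ as coming from a line bundle of geometric origin whose generic-fiber restriction remains ample. Corollary~\ref{zero envelope} then delivers the continuity of $P_{\theta'}(0)$, and hence of $P_\theta(u)$.

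The main obstacle is producing the line bundle $\Lcal_{0,R}$ on $\Xcal_R$ whose generic-fiber restriction has the prescribed ample numerical class: this demands that ample classes in $N^1(X)$ be realized, up to a positive multiple, by line bundles of geometric origin, which is where the geometric-origin hypothesis on $X$ is most crucially used. Once this is available---together with the regular model constructed in Lemma~\ref{higher geometric model} under resolution of singularities---the $dd^c$-lemma bridges the resulting discrepancy by a model function, and the descent of model functions in Proposition~\ref{models are geometric}(c) guarantees that the final form is indeed of geometric origin, allowing the application of Corollary~\ref{zero envelope}, in which the main technical work of Section~\ref{section5} has already been done.
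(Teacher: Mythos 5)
Your reduction steps at the beginning are fine and match the paper's strategy: Proposition~\ref{regulari-prop2}(viii) converts the statement to continuity, density of model functions plus (v) reduces to $u\in\modelD(X)$, (iv) shifts to $P_{\theta'}(0)$ with $\theta'=\theta+dd^cu$, and (vi), (vii) let you assume $\theta'$ is rational and induced by a model of an ample line bundle $L'$. The problem is the step you yourself flag as ``the main obstacle'': you need a line bundle $L_0$ of geometric origin (i.e.\ descending to $Y$ over $F=k(B)$) with $\{L_0\}=\{L'\}$ in $N^1(X)$, and you assert that this is ``where the geometric-origin hypothesis on $X$ is most crucially used.'' That hypothesis does not give you this. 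Being of geometric origin only says $X=Y\otimes_{K'}K$ for \emph{some} projective $Y$ over $K'=k(B)$; it says nothing about the restriction map $N^1(Y/F)_\Q\to N^1(X/K)_\Q$ being surjective, and in general it is not --- numerical classes on $X$ need not descend to $Y$. Since $\{L'\}$ arises from an arbitrary ample $\theta$ and an arbitrary model function $u$, there is no reason a multiple of it lies in the image, so your appeal to Corollary~\ref{zero envelope} is not yet justified.

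The paper closes exactly this gap with a Galois-descent argument that is absent from your proposal. One first shows, via Lemma~\ref{lemma A4 BFJ2} ($q^*P_\theta(u)=P_{q^*\theta}(q^*u)$) and the fact that $\Xan$ is the topological quotient of $(X')^{\an}$ by $\mathrm{Aut}(K'/K)$, that one may freely replace $K$ by a finite normal extension. Then the adaptation of \cite[Lemma A.7]{BFJ2} to characteristic $p$ produces such an extension $K'$ and a new function field $F$ of transcendence degree $d$ over $k$, together with a model $Y$ over $F$, for which $N^1(Y/F)_\Q\to N^1(X'/K')_\Q$ \emph{is} surjective. Only after this enlargement can one realize $\{m\theta\}$ by a line bundle of geometric origin and invoke the geometric case (the paper's Lemmas~\ref{thm-continuity-theta-envelope'} and~\ref{cor continuity function field}, which package your $dd^c$-lemma and descent steps). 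Everything downstream of the surjectivity --- the $dd^c$-lemma comparison, Proposition~\ref{models are geometric}, Lemma~\ref{higher geometric model}, Corollary~\ref{zero envelope} --- you have essentially right; what is missing is the argument that makes the surjectivity available in the first place.
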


Using resolution of singularities in dimension three over a perfect field proven by Cossart--Piltant (see Theorem \ref{thm-cp}), we get the following application:

\begin{cor} \label{continuity for surfaces}
Let $X$ be a smooth projective surface over $K$ of geometric origin from a $1$-dimensional family over a perfect field $k$. 
Then the conclusion of Theorem \ref{thm-continuity-theta-envelope} holds unconditionally.
\end{cor}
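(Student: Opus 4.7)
The plan is to obtain Corollary \ref{continuity for surfaces} as an immediate specialization of Theorem \ref{thm-continuity-theta-envelope}. In this setting $X$ is a smooth projective surface, so $n = \dim X = 2$, and the hypothesis that $X$ is of geometric origin from a $1$-dimensional family forces $d = 1$ in Definition \ref{geometric origin}. Consequently the resolution of singularities hypothesis in Theorem \ref{thm-continuity-theta-envelope} specializes to resolution of singularities over $k$ in dimension $d + n = 3$.

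The key input is then Theorem \ref{thm-cp} (Cossart--Piltant), which asserts that resolution of singularities holds in dimension three over any perfect field. Hence the assumption of Theorem \ref{thm-continuity-theta-envelope} is automatic in our case, and its conclusion — that $P_\theta(u)$ is a uniform limit of $\theta$-psh model functions, and in particular continuous — transfers at once to the surface setting, for any closed $(1,1)$-form $\theta$ with ample de Rham class $\{\theta\}$ and any $u \in C^0(X^{\an})$.

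There is essentially no technical obstacle to overcome at the level of this corollary: it is a pure bookkeeping statement verifying that the ``resolution of singularities'' hypothesis of the main theorem can be dropped as soon as the total dimension $d + n$ lies in the range covered unconditionally by known resolution results. All the substance sits upstream, in Theorem \ref{thm-continuity-theta-envelope} itself and in its dependencies (Corollary \ref{zero envelope}, Theorem \ref{relative Theorem 8.5}, and the uniform generation property of asymptotic test ideals in Theorem \ref{uniform generation}); the corollary simply records that the numerical constraint $d+n \leq 3$ suffices to invoke Cossart--Piltant and so eliminate the conditional hypothesis.
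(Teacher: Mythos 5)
Your proposal is correct and matches the paper's argument exactly: with $n=2$ and $d=1$ the hypothesis of Theorem \ref{thm-continuity-theta-envelope} becomes resolution of singularities in dimension $3$ over the perfect field $k$, which is supplied unconditionally by the Cossart--Piltant result (Theorem \ref{thm-cp}). Nothing further is needed.
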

	
We will prove Theorem \ref{thm-continuity-theta-envelope} in several steps. First, we prove it in a completely geometric situation:

\begin{lemma}\label{thm-continuity-theta-envelope'}
If we assume additionally that 
$(X,\theta)$ is of geometric origin
from a $d$-dimensional family over a perfect field $k$, 
then Theorem \ref{thm-continuity-theta-envelope} holds.
\end{lemma}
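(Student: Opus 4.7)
The plan is to reduce, through a sequence of standard envelope manipulations, to the zero-envelope case handled by Corollary \ref{zero envelope}. Since model functions are uniformly dense in $C^0(\Xan)$ by Gubler's density theorem, I may pick model functions $u_j$ with $u_j \to u$ uniformly; the contraction estimate in Proposition \ref{regulari-prop2}(v) then gives $P_\theta(u_j) \to P_\theta(u)$ uniformly on $\Xan$, and a diagonal extraction shows it is enough to produce a uniform approximation of $P_\theta(u_j)$ by $\theta$-psh model functions for each $j$. Continuity of $P_\theta(u)$ is then automatic from Proposition \ref{regulari-prop2}(viii). Hence I may assume that $u$ itself is a model function.

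For such $u$, Proposition \ref{regulari-prop2}(iv) applied with $v=u$ yields
\[
P_\theta(u) \;=\; u + P_{\theta + dd^c u}(0),
\]
and the de Rham class $\{\theta + dd^c u\} = \{\theta\}$ is still ample. Choosing $N \in \N_{>0}$ such that $Nu$ is a $\Z$-model function and $N\theta$ is represented by a line bundle on a model of $X$, the scaling identity Proposition \ref{regulari-prop2}(vii) reduces the task to producing a uniform approximation of $P_{N(\theta + dd^c u)}(0)$ by $N(\theta + dd^c u)$-psh model functions $\varphi_k$: then $u + \tfrac{1}{N}\varphi_k$ is a $\theta$-psh model function converging uniformly to $P_\theta(u)$.

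To apply Corollary \ref{zero envelope} to $N(\theta + dd^c u)$ it remains to exhibit the data geometrically. By Definition \ref{geometric origin}, $\theta$ is induced via the base change $\kcirc/R$ from a line bundle $\Lcal_R$ on a projective $R$-model $\Xcal_R$ of $Y$, where $R = \Ocal_{B,b}$ is the geometric datum witnessing $(X,\theta)$. Proposition \ref{models are geometric}(c) guarantees that $Nu$ is defined over $R$, and combining this with parts (a)--(b) of the same proposition, I may pass to a projective $R$-model $\KY$ of $Y$ dominating $\Xcal_R$ and supporting a vertical divisor $D_0$ with $Nu = \varphi_{D_0 \otimes_R \kcirc}$. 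The line bundle $\Lcal_R^{\otimes N} \otimes \Ocal_{\KY}(D_0)$ on $\KY$ then base-changes to a line bundle inducing $N(\theta + dd^c u)$, and its generic fibre $L^{\otimes N}$ on $X$ is ample because $\{\theta\}$ is. Thus $N(\theta + dd^c u)$ fits the hypotheses of Corollary \ref{zero envelope}, which delivers the required approximation and closes the argument.

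The main technical obstacle is this last step: one has to verify that the perturbed form $\theta + dd^c u$ is still realizable by an ample line bundle on a \emph{geometric} $R$-model, so that the test-ideal machinery packaged in Corollary \ref{zero envelope} applies. This is precisely what Proposition \ref{models are geometric} is designed to provide, and it is the main point where the geometric-origin hypothesis enters, in contrast to the characteristic-zero argument of Boucksom, Favre and Jonsson, where multiplier ideals bypass such a descent step.
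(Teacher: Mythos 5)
Your proposal is correct and follows essentially the same route as the paper's proof: reduce to a model function $u$ via density and Proposition \ref{regulari-prop2}(v), rescale by Proposition \ref{regulari-prop2}(vii), use Proposition \ref{models are geometric} to realize $u$ (hence $\theta+dd^cu$) on a geometric $R$-model dominating $\Xcal_R$, shift to the zero envelope via Proposition \ref{regulari-prop2}(iv), and conclude with Corollary \ref{zero envelope}. The only cosmetic difference is that you invoke part (c) of Proposition \ref{models are geometric} explicitly to descend the vertical divisor, whereas the paper folds this into parts (a)--(b); the substance is identical.
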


\begin{proof}
Recall that the space of model functions $\modelD(X)$ is dense in $C^0(X^\an)$ for the
topology of uniform convergence \cite[Thm.~7.12]{gubler-crelle}.
Hence we may assume that $u\in \modelD(X)$
by Proposition \ref{regulari-prop2}(v). 
Observe that by Proposition \ref{regulari-prop2}(vii), we may 
replace $(\theta,u)$ by a suitable multiple.
Hence we may assume without loss of generality 
that the model function $u$ is defined by a vertical divisor on a  $\kcirc$-model $\Xcal'$. It is clear that we can choose $\Xcal'$ dominating 
the geometric model $\Xcal=\Xcal_R \otimes_R \kcirc$ of $\theta$ from Definition  \ref{geometric origin}. 
It follows from Proposition \ref{models are geometric}{(a)}
that we may assume  $\Xcal = \Xcal'$. By Proposition \ref{regulari-prop2}(iv) we get
\begin{equation} \label{change to 0}
\pshenvelope_\theta(u)=\pshenvelope_{\theta+dd^cu}(0)+u.
\end{equation}
By construction, the class $\theta + dd^cu$ is induced by a line bundle on $\Xcal_R$ and 
hence Corollary \ref{zero envelope} yields that 
$\pshenvelope_{\theta+dd^cu}(0)$ is a uniform limit of $(\theta + dd^c u)$-psh model functions 
$\varphi_i$. 
Then $P_\theta(u)$ is the uniform limit of the sequence 
of $\theta$-psh functions 
$\varphi_i+u$
 by \eqref{change to 0}. 
\end{proof}

In the lemma above we have proven Theorem \ref{thm-continuity-theta-envelope} 
under the additional assumption that the $(1,1)$-form $\theta$ is defined geometrically. 
In the next lemma, we relax this assumption a bit only assuming that the de Rham class of $\theta$ is defined geometrically.

\begin{lemma}
\label{cor continuity function field}
If we assume additionally that  the de Rham class $\{\theta\}$ is induced by an ample line bundle $L$ on $X$ 
such that $(X,L)$ is of geometric origin from a $d$-dimensional family over the perfect field $k$,  
then Theorem \ref{thm-continuity-theta-envelope} holds.
\end{lemma}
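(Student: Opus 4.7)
The strategy is to reduce this statement to Lemma \ref{thm-continuity-theta-envelope'} by replacing $\theta$ with a cohomologous closed $(1,1)$-form that is itself geometrically defined. The difference between $\theta$ and a geometric representative of its de Rham class will be of the form $dd^c w$ for a model function $w$, and this shift can be absorbed into the argument of the envelope via Proposition \ref{regulari-prop2}(iv).

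More precisely, since $(X,L)$ is of geometric origin from a $d$-dimensional family over $k$, there is a projective integral scheme $\Xcal_R$ over $R = \Ocal_{B,b}$ together with a line bundle $\Lcal_R$ extending $L$. Base changing to $\kcirc$ produces a model $(\Xcal,\widetilde{\Lcal})$ of $(X,L)$, whose associated model metric $\metr_L$ gives the closed $(1,1)$-form $\theta_L \coloneqq c_1(L,\metr_L) \in \Zcal^{1,1}(X)$. By construction $(X,\theta_L)$ is of geometric origin from a $d$-dimensional family over $k$ in the sense of Definition \ref{geometric origin}, and
\begin{equation*}
\{\theta_L\} = c_1(L) = \{\theta\} \quad \text{in } N^1(X).
\end{equation*}
I would then invoke the $dd^c$-lemma (a closed $(1,1)$-form on $X$ with trivial de Rham class is of the form $dd^c w$ for some $w \in \modelD(X)$) to obtain a model function $w$ with $\theta = \theta_L + dd^c w$.

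With this identity, Proposition \ref{regulari-prop2}(iv) applied with $v = -w$ yields
\begin{equation*}
\pshenvelope_\theta(u) = -w + \pshenvelope_{\theta_L}(u+w).
\end{equation*}
Since $\{\theta_L\} = \{\theta\}$ is ample and $(X,\theta_L)$ is of geometric origin, Lemma \ref{thm-continuity-theta-envelope'} applies to $\theta_L$ and the continuous function $u+w$, exhibiting $\pshenvelope_{\theta_L}(u+w)$ as a uniform limit of $\theta_L$-psh model functions $\varphi_i$. A direct check shows that $\varphi_i - w$ is $\theta$-psh, because
\begin{equation*}
\theta + dd^c(\varphi_i - w) = \theta_L + dd^c \varphi_i
\end{equation*}
is semipositive. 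Hence $\pshenvelope_\theta(u)$ is the uniform limit of the $\theta$-psh model functions $\varphi_i - w$, which is exactly the conclusion of Theorem \ref{thm-continuity-theta-envelope}.

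The main obstacle is the availability of the non-archimedean $dd^c$-lemma in the required form; since the introduction announces that the general case is deduced in this section \emph{via the $dd^c$-lemma and basic properties of the envelope}, I expect the authors to prove or cite this lemma explicitly. Once that is in hand the reduction above is essentially formal: the construction of $\theta_L$ from $\Lcal_R$ is immediate, and all remaining manipulations reduce to invocations of Proposition \ref{regulari-prop2}(iv) and the already-established Lemma \ref{thm-continuity-theta-envelope'}.
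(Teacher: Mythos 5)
Your overall strategy is exactly the paper's: produce a geometric representative $\theta'$ of the de Rham class $\{\theta\}$, use the $dd^c$-lemma to write $\theta' = \theta + dd^c v$ with $v \in \modelD(X)$, and then transfer continuity from $\pshenvelope_{\theta'}(u-v)$ (Lemma \ref{thm-continuity-theta-envelope'}) back to $\pshenvelope_\theta(u)$ via Proposition \ref{regulari-prop2}(iv). However, two steps you dismiss as immediate or formal are genuine gaps. First, Definition \ref{geometric origin} for $(X,L)$ of geometric origin only provides a line bundle $L'$ on the variety $Y$ over $K'=k(B)$; it does \emph{not} hand you a line bundle $\Lcal_R$ on a projective $R$-model of $Y$. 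Extending $L'$ to a model is not automatic (a line bundle on the generic fiber need not extend to a given model). The paper handles this by first using Proposition \ref{regulari-prop2}(vii) to replace $L$ by a power so that $L$, and hence $L'$, is very ample, and then extending $L'$ to a very ample line bundle on the $R$-model obtained as the closure of the projective embedding. Without this (or an appeal to P\'epin's semi-factorial models as in Lemma \ref{higher geometric model}), your $\theta_L$ does not exist as stated.

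Second, the $dd^c$-lemma cannot be applied to an arbitrary real closed $(1,1)$-form: $dd^c$ of a model function always lies in $\Zcal^{1,1}(X)_\Q$, so a form with trivial de Rham class that is not rational is never of the form $dd^c w$. The paper therefore begins by reducing to $\theta \in \Zcal^{1,1}(X)_\Q$ using the uniform continuity of $\theta \mapsto \pshenvelope_\theta(u)$ from Proposition \ref{regulari-prop2}(vi); only then does \cite[Thm.~4.3]{BFJ1} produce the model function $v$. Your proposal invokes the $dd^c$-lemma for the difference $\theta - \theta_L$ without this rationality reduction, and as stated that step fails. With these two reductions inserted at the start, the rest of your argument goes through and coincides with the paper's proof.
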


\begin{proof}
By Proposition \ref{regulari-prop2}(vi), we may assume that $\theta \in \Zcal^{1,1}(X)_\Q$. 
By Proposition \ref{regulari-prop2}(vii), 
we may replace $L$ by a positive tensor power and 
$\theta$ by the corresponding multiple, and so we may assume that $L$ is very ample. 
In the notation of Definition \ref{geometric origin}, 
the assumption that $(X,L)$ is of geometric origin means that $L$ is the pull-back of a line bundle $L'$ on the projective variety $Y$ over $K'=k(B)$. 
It follows easily from \cite[Prop.~III.9.3]{Hart} and 
\cite[Prop.~2.7.1(xii)]{EGA4} that $L'$ is very ample. 
Then $L'$ extends to a very ample line bundle on a projective $R$-model of $Y$ 
for the discrete valuation ring $R = \Ocal_{B,b}$ from Definition \ref{geometric origin}. 
By base change to $\kcirc$, we conclude that there is a closed $(1,1)$-form $\theta'$ on $\Xan$ 
with de Rham class $\{\theta'\}=\{\theta\} $ such that $(X,\theta')$ is of geometric origin from a $d$-dimensional family over $k$.

By the $dd^c$-lemma in \cite[Thm.~4.3]{BFJ1} 
(see also the second author's thesis \cite[Thm.~4.2.7]{JellThesis} for generalizations) and 
using the rationality assumption on $\theta$ from the beginning of the proof, 
there is $v \in \modelD(X) $ such that $\theta' = \theta +dd^cv$. It follows from Proposition \ref{regulari-prop2}(iv) that 
\begin{align*}
\pshenvelope_\theta(u)-v=\pshenvelope_{\theta+dd^cv}(u-v)=\pshenvelope_{\theta'}(u-v).
\end{align*}
By Lemma \ref{thm-continuity-theta-envelope'}, the function $\pshenvelope_{\theta'}(u-v)$ is a uniform limit of $\theta'$-psh functions. 
Adding $v$, we get the claim for $\pshenvelope_\theta(u)$.
\end{proof}

To prove Theorem \ref{thm-continuity-theta-envelope} in full generality, 
the idea is to reduce to the above geometric situation by a similar trick as in \cite[Appendix A]{BFJ2}.

\begin{proof}[Proof of Theorem \ref{thm-continuity-theta-envelope}]
We note first that by 
Proposition \ref{regulari-prop2} (viii)
the  property that $\pshenvelope_\theta(u)$ is a uniform limit of $\theta$-psh model functions
is equivalent to the property that it is a continuous function. 
Let $K' / K$ be a finite normal extension and denote by $q \colon X' \coloneqq X \otimes_K{K'} \to X$ the natural projection.
Let $\theta' = q^*\theta \in \Zcal^{1,1}(X')$.
Then by Lemma \ref{lemma A4 BFJ2} we have that 
\[
q^* \pshenvelope_\theta(u) = \pshenvelope_{\theta'}(q^*u).
\]
It follows from \cite[Prop.~1.3.5]{berkovich-book}  
that   $\Xan$ is as a topological space equal to the quotient of $(X')^{\rm an}$ by the automorphism group of $K'/K$. 
We conclude that $\pshenvelope_\theta(u)$ is continuous if and only if $\pshenvelope_{\theta'}(q^*u)$ is continuous.

Hence 
we can replace $K$ by a finite normal extension.
Adapting the same argument as in  \cite[Lemma A.7]{BFJ2} to characteristic $p$, 
there exists a finite normal extension $K'/K$ and 
a function field $F$ of transcendence degree $d$ over $k$ with $K'$ as completion as in Definiton \ref{definition geometric origin generic}  
such that $X' \coloneqq X \otimes_K K'$ is the base change of a projective variety $Y$ over $F$ with 
$N^1(Y /F)_\Q \to N^1(X'/K')_\Q$ 
surjective. 
Replacing $K'$ by $K$,  
we can assume that there is $Y$ as above with a surjective map
\begin{equation}
\label{eq galois trick}
N^1(Y /F)_\Q \to N^1(X/K)_\Q
\end{equation}
induced by the natural projection $X \to Y$.  
To prove continuity of $\pshenvelope_\theta(u)$, 
we may assume that the de Rham class $\{\theta\}$ is in $N^1(X)_\Q$ 
by using an approximation argument based on Proposition \ref{regulari-prop2}(vi).
We conclude from surjectivity in \eqref{eq galois trick}  
that there is a non-zero $m \in {\N}$ such that  $\{m \theta\}$ is induced by a line bundle $L$ 
with $(X,L)$ of geometric origin from a $d$-dimensional family over $k$ (in fact from $Y$).
By Proposition \ref{regulari-prop2}(vii), we have  
$\pshenvelope_{\theta}(u)=\frac{1}{m}\pshenvelope_{m\theta}(mu)$ 
and hence continuity follows from Lemma \ref{cor continuity function field}.
\end{proof}

\section{The Monge--Amp\`ere equation}
\label{section7}

Let $K$ be a field endowed with a complete discrete absolute value. Boucksom, Favre, and Jonsson have shown in \cite{BFJ1, BFJ2} 
that the Monge--Amp\`ere equation for a Radon measure supported on 
the skeleton of a smooth projective variety over $K$ has 
a solution if the variety is of geometric origin from a one-dimensional 
family over a field  of characteristic zero. 
In this section, we will explain that the same is true in 
characteristic $p > 0$ if we assume resolution of singularities {(see Section \ref{new-section-6} for precise definitions).}

In the following, we work under the following assumptions:

\begin{itemize}
\item[(A1)]  
The $n$-dimensional smooth projective variety $X$ 
over $K$ is of  geometric origin from a $d$-dimensional family over a 
perfect field $k$ of characteristic $p>0$.  
\item[(A2)] 
Resolution of singularities holds over $k$ in dimension $d+n$.
\item[(A3)] 
Embedded resolution of singularities holds over $k$ in dimension $d+n$. 
\end{itemize} 

{Note that assumptions (A2) and (A3) are unconditional for $n=2$ and $d=1$ by Theorem \ref{thm-cp} of Cossart and Piltant.  
For the following, it is crucial to 
have in mind that  models of $X$ can be defined geometrically which follows from Proposition 
\ref{models are geometric}.}

To transfer the results from \cite{BFJ1, BFJ2}, it is 
essential to note that every projective $\kcirc$-model 
of $X$ is dominated by a {projective} SNC-model of $X$. 
To see this, we note first that we may assume that 
the given  $\kcirc$-model is  of geometric origin 
over the perfect field $k$ by Proposition 
\ref{models are geometric}. 
Using resolution of singularities in dimension $d+n$ 
similarly as in the third step of the proof of Lemma \ref{higher geometric model}, 
we deduce that there is a regular {projective} scheme $\KX_R$ over 
$R$ as in Definition \ref{geometric origin} dominating the given model. 
Applying in the same way embedded resolution of singularities in 
dimension $d+n$ to the non-smooth fibers of $\KX_R$ 
over $R$, we may assume that the singular fibers of 
$\KX_R$ have the same support as a strict normal crossing divisor. 
Then base change to $\kcirc$ yields the claim as base change of the discrete valuation ring $\Ocal_{B,b}$ 
to its completion $\kcirc$ preserves regularity \cite[\href{http://stacks.math.columbia.edu/tag/0BG4}{Tag 0BG4}]{stacks-project} and 
strict normal crossing support.

Having sufficiently many {projective}  SNC-models of $X$ at hand, 
the density results of skeletons in $\Xan$ given in \cite[Sect.~3]{BFJ1} also hold in our case of equicharacteristic $p$.
Given a closed $(1,1)$-form $\theta$ with ample de Rham class $\{\theta\}$, 
the notion of $\theta$-psh functions on $\Xan$ introduced in \cite[Sect.~7]{BFJ1} keeps the same properties in our case. 
In fact, the results of \cite[Sect.~1--7]{BFJ1} and their proofs carry over to our setting. 

Note that we have already proven the continuity of the $\theta$-psh envelope in Theorem \ref{thm-continuity-theta-envelope}, 
which is the analogue of \cite[Thm.~8.3]{BFJ1}, by using test ideals instead of multiplier ideals. 
{For $u \in C^0(\Xan)$, we recall from \ref{definition envelope} that we have used a different definition of the $\theta$-psh envelope $\pshenvelope_\theta(u)$ than in \cite[Def.~8.1]{BFJ1}. 
{Both definitions agree in the equicharacteristic zero
situation by  \cite[Thm.~8.3 and Lemma 8.9]{BFJ1}.
If the characteristic of $K$ is positive and the Assumptions
(A1)--(A3) hold then we have explained above how to define 
$\theta$-psh functions.
We claim now that in this case both definitions of the envelope 
agree as well.}
Indeed, it follows from \cite[Lemma 8.4]{BFJ1} that the definitions agree on quasi-monomial points of $\Xan$. For any $x \in \Xan$, we consider the net $p_\Xcal(x)$ with $\Xcal$ ranging over all SNC models of $X$. By \cite[Cor.~3.9]{BFJ1}, this net of quasi-monomial points converges to $x$. It follows from continuity that the net $\pshenvelope_\theta(u)(p_\Xcal(x)) $ converges to $\pshenvelope_\theta(u)(x)$ for our definition of the envelope. By \cite[Thm.~7.11, Prop.~8.2(i)]{BFJ1}, the same convergence holds for their envelope and hence both definitions agree.}

This yields now in the same way as in \cite[Thm.~8.7]{BFJ1} that the following \emph{monotone regularization} holds:

\begin{cor} \label{regularization}
Under the assumptions (A1)--(A3), let $\theta$ be a closed $(1,1)$-form on $\Xan$ with ample de Rham class. 
Then  every $\theta$-psh function on $\Xan$ is the pointwise limit of a decreasing net of $\theta$-psh model functions on $\Xan$. 
\end{cor}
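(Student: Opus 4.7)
The plan is to transcribe the proof of \cite[Thm.~8.7]{BFJ1}, which in our equicharacteristic-$p$ setting rests on three ingredients now all available: the continuity of the envelope $\pshenvelope_\theta(u)$ for every $u \in C^0(\Xan)$ (Theorem~\ref{thm-continuity-theta-envelope}), its consequence that each such envelope is the uniform limit from below of $\theta$-psh model functions (Proposition~\ref{regulari-prop2}(viii)), and the stability of $\PSH_\modelD(X,\theta)$ under $\max$ recalled in~\ref{max psh}, together with the density of $\modelD(X)$ in $C^0(\Xan)$ used in the proof of Lemma~\ref{thm-continuity-theta-envelope'}.

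Let $\varphi$ be a $\theta$-psh function on $\Xan$. Being upper semicontinuous on the compact space $\Xan$, $\varphi$ is bounded above, and by the density of $\modelD(X)$ in $C^0(\Xan)$ one has $\varphi(x) = \inf_{u \in \Ical} u(x)$ pointwise, where $\Ical$ is the family of all model functions $u \geq \varphi$. This family is downward-directed because model functions coincide with piecewise $\Q$-linear functions on suitable skeletons (Remark~\ref{model functions Q linear functions}) and are therefore stable under $\min$. For each $u \in \Ical$, Theorem~\ref{thm-continuity-theta-envelope} makes $\pshenvelope_\theta(u)$ a continuous function satisfying the squeeze $\varphi \leq \pshenvelope_\theta(u) \leq u$. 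Since $\pshenvelope_\theta$ is order-preserving (Proposition~\ref{regulari-prop2}(i)), the net $(\pshenvelope_\theta(u))_{u \in \Ical}$ is a decreasing net of continuous functions converging pointwise to $\varphi$.

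It remains to lift this to a decreasing net of $\theta$-psh model functions. For each $u \in \Ical$ and each $\epsilon > 0$, Proposition~\ref{regulari-prop2}(viii) provides a $\theta$-psh model function $\psi_{u,\epsilon}$ with $\pshenvelope_\theta(u) - \epsilon \leq \psi_{u,\epsilon} \leq \pshenvelope_\theta(u)$. Indexed over $(u,\epsilon) \in \Ical \times \R_{>0}$, this family of $\theta$-psh model functions converges pointwise to $\varphi$ because of the two-sided estimate $\varphi - \epsilon \leq \psi_{u,\epsilon} \leq u$.

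The main obstacle is that this family is not literally decreasing, since $\PSH_\modelD(X,\theta)$ is not stable under $\min$. As in \emph{loc.~cit.}, the remedy is to reindex by finite subsets and re-run the envelope construction: given finitely many members $\psi_{u_1,\epsilon_1},\ldots,\psi_{u_r,\epsilon_r}$, their pointwise minimum $m$ is a continuous function with $m \geq \varphi - \max_i \epsilon_i$; Theorem~\ref{thm-continuity-theta-envelope} then makes $\pshenvelope_\theta(m)$ continuous with $\varphi \leq \pshenvelope_\theta(m) \leq m$, and Proposition~\ref{regulari-prop2}(viii) furnishes a $\theta$-psh model function approximating $\pshenvelope_\theta(m)$ from below within any prescribed error. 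Arranging these approximations coherently over the directed set of finite subfamilies, exactly as in \cite[Thm.~8.7]{BFJ1}, produces a genuinely downward-directed net of $\theta$-psh model functions with pointwise limit $\varphi$. Carrying out this combinatorial bookkeeping is the technical heart of the argument, everything else being a formal consequence of the ingredients listed at the outset.
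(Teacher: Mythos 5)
The paper disposes of this corollary with a one-line appeal to \cite[Thm.~8.7]{BFJ1}, so the real question is whether your transcription of that argument is sound. Your first two paragraphs are essentially correct, with one caveat you should make explicit: the inequality $\varphi \leq \pshenvelope_\theta(u)$ is not automatic with Definition \ref{definition envelope}, where the supremum runs only over $\theta$-psh \emph{model} functions and $\varphi$ is not a competitor; it relies on the agreement of this envelope with the one of \cite[Def.~8.1]{BFJ1}, which is exactly what the paragraph preceding the corollary establishes under (A1)--(A3).

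The genuine gap is in your last two paragraphs, and it is not mere bookkeeping. First, for $m=\min_i \psi_{u_i,\epsilon_i}$ the claimed squeeze $\varphi\leq \pshenvelope_\theta(m)\leq m$ is false: since $\psi_{u_i,\epsilon_i}$ approximates $\pshenvelope_\theta(u_i)$ from \emph{below}, $m$ can dip below $\varphi$ by up to $\max_i\epsilon_i$, and wherever $m<\varphi$ one has $\pshenvelope_\theta(m)\leq m<\varphi$. Second, indexing by finite subsets of $\Ical\times\R_{>0}$ ordered by inclusion destroys convergence: the error term $\max_{i\in F}\epsilon_i$ is \emph{non-decreasing} as $F$ grows, so your lower bound on the approximants never improves to $\varphi$, and the resulting net need not converge to $\varphi$ at all; nor is it decreasing, since a lower approximant of $\pshenvelope_\theta(m_{F'})$ need not lie below a lower approximant of $\pshenvelope_\theta(m_F)$ once the errors $\delta_F$ are taken into account. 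The correct device is to approximate each $\pshenvelope_\theta(u)$ from \emph{above}: take $\psi$ a $\theta$-psh model function with $\pshenvelope_\theta(u)-c'\leq\psi\leq \pshenvelope_\theta(u)$ and replace it by $\psi+c$ with $c>c'$ (a constant from the dense subgroup $\Q\lambda_K$, so that $\psi+c$ is again a $\theta$-psh model function). Every such $\psi+c$ satisfies $\psi+c\geq \pshenvelope_\theta(u)+(c-c')>\varphi$, and the family of all of them, over all $u\in\Ical$, is downward directed: given two members $\rho_1,\rho_2$, one has $\min(\rho_1,\rho_2)\geq\varphi+\delta$ for some $\delta>0$, so $w:=\min(\rho_1,\rho_2)-\delta/2$ still lies in $\Ical$, and a member built from $\pshenvelope_\theta(w)$ with $c\leq\delta/2$ sits below both $\rho_i$ while remaining $\geq\varphi$. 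The strict gap $\delta>0$ is what absorbs the additive constant; your construction has no such gap, which is why the directedness cannot be arranged in your setup.
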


In \cite[Sect.~3]{BFJ2}, the monotone regularization is the basic ingredient to generalize 
the Monge--Amp\`ere operator from $\theta$-psh model functions to bounded $\theta$-psh functions and 
hence it applies also to our setting leading to the same results as in \cite[Sect.~3]{BFJ2}.
For a bounded $\theta$-psh function $\varphi$, we denote by $\MA_\theta(\varphi)$ the associated Monge-Amp\`ere measure on $X^\an$.
The definitions, results and arguments  from \cite[Sect.~4--6]{BFJ2} carry over without change. 
In particular, we may choose a decreasing sequence of $\theta$-psh model functions on $\Xan$ 
in the monotone regularization from Corollary \ref{regularization} similarly as in \cite[Prop.~4.7]{BFJ2}.

A crucial step is now to prove the following \emph{orthogonality property}:

\begin{theo} \label{orthogonality principle}
Under the assumptions (A1)--(A3), let $\theta$ be a closed $(1,1)$-form on $\Xan$ with ample de Rham class. 
Then for every continuous function $f$ on $\Xan$ with $\theta$-psh envelope $\pshenvelope_\theta(f)$, we have the orthogonality property
\begin{align*}
\int_{X^{\an}}(f-P_{\theta }(f))\MA_\theta(P_{\theta }(f))=0.
\end{align*}
\end{theo}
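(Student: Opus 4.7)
The plan is to follow the variational argument of \cite[Thm.~7.2]{BFJ2}, in the refined formulation of \cite[Thm.~6.3.3]{BGJKM} which shows that the orthogonality property is a formal consequence of the continuity of the $\theta$-psh envelope together with the pluripotential-theoretic machinery on $\Xan$. By Theorem \ref{thm-continuity-theta-envelope}, the envelope $P_\theta(f)$ is continuous under assumptions (A1)--(A3). Moreover, since $\theta$-psh model functions are closed under max (see \ref{max psh}), Dini's theorem combined with Proposition \ref{regulari-prop2}(viii) produces an increasing net of $\theta$-psh model functions $\varphi_j \leq f$ converging uniformly to $P_\theta(f)$; in particular, the Monge--Amp\`ere measure $\MA_\theta(P_\theta(f))$ is a well-defined positive Radon measure on $\Xan$ obtained as the weak limit of $\MA_\theta(\varphi_j)$.

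The implementation proceeds as follows. First, by Proposition \ref{regulari-prop2}(v) and density of model functions in $C^0(\Xan)$ (see \cite[Thm.~7.12]{gubler-crelle}), one may reduce to the case where $f$ is a model function. One then introduces the Monge--Amp\`ere energy functional $E_\theta$, initially defined on $\theta$-psh model functions and extended to continuous $\theta$-psh functions through the monotone regularization of Corollary \ref{regularization}. Its concavity, monotonicity, and the differential identity
\[
\tfrac{d}{dt}\Big|_{t=0} E_\theta(\varphi + t\psi) = \int_{\Xan} \psi \, \MA_\theta(\varphi)
\]
for model functions $\varphi, \psi$ are established exactly as in \cite[Sect.~6]{BFJ2}. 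The orthogonality relation is then obtained by a standard perturbation argument: one compares $E_\theta(P_\theta(f))$ with $E_\theta(P_\theta(f + tg))$ for suitable continuous perturbations $g$, combines the Lipschitz bound of Proposition \ref{regulari-prop2}(v) with the variational characterization of the envelope as the largest $\theta$-psh function dominated by $f$, and sends $t \to 0^+$ to extract
\[
\int_{\Xan} (f - P_\theta(f)) \, \MA_\theta(P_\theta(f)) = 0,
\]
the lower bound $\geq 0$ being trivial from $f \geq P_\theta(f)$.

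The main obstacle --- the continuity of $P_\theta(f)$ --- has already been overcome in Theorem \ref{thm-continuity-theta-envelope} via test ideals. Once continuity is in place, the remaining steps are formal: the arguments of \cite[Sect.~6--7]{BFJ2} (or equivalently \cite[Sect.~6]{BGJKM}) transfer to our setting without modification, because the required pluripotential-theoretic framework on $\Xan$ has been set up in the present section under assumptions (A1)--(A3), as discussed in the paragraphs preceding Corollary \ref{regularization}. In particular, no further geometric input beyond (A1) and no further resolution input beyond (A2)--(A3) is needed, and the characteristic of the residue field plays no role in this last step.
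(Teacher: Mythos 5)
Your proposal follows essentially the same route as the paper: invoke the continuity of $P_\theta(f)$ from Theorem \ref{thm-continuity-theta-envelope} and then apply the result of \cite{BGJKM} (the paper cites \cite[Thm.~6.3.2]{BGJKM}) that continuity of the envelope formally implies orthogonality via the differentiability of the energy. The only step the paper makes explicit that you gloss over is the preliminary reduction, using Proposition \ref{regulari-prop2}(vi)--(vii) and the continuity of the Monge--Amp\`ere operator, to the case where $\theta$ lies in $\Zcal^{1,1}(X)_\Q$ and is induced by a line bundle on a model, which is needed before the cited result (formulated for metrized line bundles) can be applied.
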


\begin{proof}
By Proposition \ref{regulari-prop2}(vi) and the continuity of the Monge--Amp\`ere measure given in \cite[Thm.~3.1]{BFJ2}, we may assume that $\theta \in \Zcal^{1,1}(X)_\Q$. Using Proposition \ref{regulari-prop2}(vii), we may assume that $\theta$ is induced by a line bundle of a model of $X$. Then the claim follows from \cite[Thm.~6.3.2]{BGJKM} as $\pshenvelope_\theta(f)$ is continuous by Theorem \ref{thm-continuity-theta-envelope}.
\end{proof}

As a consequence of the orthogonality property, 
we get differentiability of $E \circ \pshenvelope_\theta $ as in \cite[Thm.~7.2]{BFJ2} 
where $E$ is the energy from \cite[Sect.~6]{BFJ2}. 
We have now all ingredients available to solve the \emph{following Monge--Amp\`ere equation}. 

\begin{theo} \label{MA problem}
Under the assumptions (A1)--(A3), let $\theta$ be a closed $(1,1)$-form on $\Xan$ with ample de Rham class $\{\theta\}$ and 
let  $\mu$ be a positive Radon measure on $X^\an$ of mass $\{\theta\}^n$ supported on the skeleton of a {projective} SNC-model. 
Then there is a continuous $\theta$-psh function $\varphi$ on $\Xan$ 
such that $\MA_\theta(\varphi)=\mu$ and $\varphi$ is unique up to additive constants.
\end{theo}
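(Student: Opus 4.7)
The plan is to run the variational method of Boucksom--Favre--Jonsson from \cite[\S 7--8]{BFJ2} in our equicharacteristic~$p$ setting. All the basic ingredients have been transferred in the preceding discussion: assumptions (A1)--(A3) guarantee that projective SNC-models of $X$ are cofinal among models (via \ref{models are geometric} plus resolution and embedded resolution), so the piecewise-linear structures on skeletons and the density results of \cite[\S 3]{BFJ1} apply; Corollary \ref{regularization} gives monotone regularization and hence the extension of the Monge--Amp\`ere operator to bounded $\theta$-psh functions; and Theorem \ref{orthogonality principle}, together with continuity of envelopes (Theorem \ref{thm-continuity-theta-envelope}), yields the differentiability of $E\circ P_\theta$ on $C^0(\Xan)$ exactly as in \cite[Thm.~7.2]{BFJ2}.

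\emph{Uniqueness.} The domination principle for bounded $\theta$-psh functions, proved in \cite[\S 5]{BFJ2}, carries over verbatim since it only uses the formalism of the Monge--Amp\`ere operator on bounded $\theta$-psh functions and the integration-by-parts / energy pairing; alternatively, one can invoke the Yuan--Zhang uniqueness result \cite{yuan-zhang} applied to the ample de Rham class $\{\theta\}$. In either case, two continuous $\theta$-psh solutions of $\MA_\theta(\varphi)=\mu$ differ by an additive constant.

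\emph{Existence.} On the finite-energy class $\Ecal^1(\Xan,\theta)$ consider the functional
\[
F_\mu(\varphi) \;=\; E_\theta(\varphi) - \int_{\Xan} \varphi\,d\mu,
\]
normalized by $\sup_{\Xan}\varphi = 0$. Because $\mu$ is a positive Radon measure supported on the skeleton of a projective SNC-model, $\mu$ does not charge the relevant pluripolar sets, so $F_\mu$ is upper semicontinuous for the weak topology on $\Ecal^1(\Xan,\theta)$ and coercive modulo constants. Hence $F_\mu$ attains its supremum at some $\varphi\in \Ecal^1(\Xan,\theta)$. Differentiating $F_\mu$ along the path $t\mapsto P_\theta(\varphi+tf)$ for $f\in C^0(\Xan)$ and using the differentiability of $E\circ P_\theta$ supplied by the orthogonality property (Theorem \ref{orthogonality principle}), the Euler--Lagrange equation $\MA_\theta(\varphi)=\mu$ follows exactly as in \cite[Thm.~8.1]{BFJ2}.

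\emph{Continuity of the solution.} It remains to upgrade this finite-energy solution to a continuous one. Since $\mu$ is supported on the skeleton $S(\Xcal)$ of a projective SNC-model, standard capacity estimates (as in \cite[\S 8]{BFJ2}) show that $\varphi$ is bounded; and the variational characterization forces $\varphi = P_\theta(\varphi\circ p_\Xcal)$ on the complement of $S(\Xcal)$, so that continuity on $\Xan$ reduces to continuity of $\varphi$ on $S(\Xcal)$, which in turn follows by applying Theorem \ref{thm-continuity-theta-envelope} to the continuous function $\varphi\circ p_\Xcal$. The main obstacle, strictly speaking, is not any single argument but rather verifying that every step of the BFJ2 machinery passes to our setting; the nontrivial substitutions have already been made (test ideals in place of multiplier ideals for continuity of envelopes, Keeler's vanishing in place of Fujita's for uniform generation, and the geometric descent of models via Proposition \ref{models are geometric}), so what remains is essentially a careful inspection of \cite[\S 4--8]{BFJ2}.
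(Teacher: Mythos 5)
Your uniqueness and existence arguments follow the same route as the paper: uniqueness via Yuan--Zhang \cite{yuan-zhang} (as in \cite[\S 8.1]{BFJ2}), and existence via the variational method, whose inputs -- upper semicontinuity of the energy \cite[Prop.~6.2]{BFJ2}, the compactness theorem \cite[Thm.~7.10]{BFJ1}, and the differentiability of $E\circ \pshenvelope_\theta$ coming from the orthogonality property -- are exactly the ones the paper checks are available in equicharacteristic $p$. Up to that point the proposal is fine.

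The continuity step, however, has a genuine gap. You assert that the variational characterization forces $\varphi = \pshenvelope_\theta(\varphi\circ p_\Xcal)$ and then deduce continuity of $\varphi$ on $S(\Xcal)$ ``by applying Theorem \ref{thm-continuity-theta-envelope} to the continuous function $\varphi\circ p_\Xcal$.'' This is circular: Theorem \ref{thm-continuity-theta-envelope} requires $u\in C^0(\Xan)$, and $\varphi\circ p_\Xcal$ is continuous precisely when $\varphi|_{S(\Xcal)}$ is continuous, which is what you are trying to prove. Moreover the identity $\varphi=\pshenvelope_\theta(\varphi\circ p_\Xcal)$ is not justified and is dangerous in dimension $\geq 2$: the Appendix of this paper exhibits a solution $\varphi'$ of a Monge--Amp\`ere equation with $\mu$ supported on $\Delta_\Xcal$ for which $\varphi'\neq \varphi'\circ p_\Xcal$, so any argument that passes through the retraction must be handled with care (the correct statement from \cite[\S 8]{BFJ2} is that $\varphi$ is the supremum of the $\theta$-psh functions bounded above by $\varphi$ \emph{on the support of} $\mu$, which is not the same envelope and does not by itself yield continuity). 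The paper instead proves continuity by the capacity estimates in the spirit of Kolodziej from \cite[\S 8.3]{BFJ2}: one first shows $\mu$ is dominated by a multiple of the Monge--Amp\`ere capacity because it is supported on the skeleton of a projective SNC-model, and then runs Kolodziej's iteration together with the comparison principle to conclude that the bounded finite-energy solution is in fact continuous. You should replace your third paragraph by this argument (and verify that the capacity theory of \cite[\S 4--5]{BFJ2} transfers, which it does given the cofinality of SNC-models under (A1)--(A3)).
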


\begin{proof}
It was shown in \cite[\S 8.1]{BFJ2} that uniqueness follows 
from a result of Yuan and Zhang in \cite{yuan-zhang}. 
To prove existence of a $\theta$-psh solution $\varphi$, 
we use the variational method of Boucksom, Favre, and Jonsson. 
The basic tools needed here are upper semicontinuity 
of the energy  \cite[Prop.~6.2]{BFJ2}, 
the compactness theorem \cite[Thm.~7.10]{BFJ1}, 
and the differentiability of $E \circ \pshenvelope_\theta$. 
As explained above, all these results are available in our setting. 
It remains to see that $\varphi$ is continuous and 
this is done by estimates in the spirit of Kolodziej 
as in \cite[\S 8.3]{BFJ2}. 
\end{proof}

\appendix

\section{The skeleton and the retraction in the toric case \\
by Jos\'e Ignacio Burgos Gil and Mart\'in Sombra}\label{appendix}

\setcounter{theo}{0}

In this appendix we give a combinatorial description of the skeleton
associated to a toric model of a toric variety, and of the
corresponding retraction.  We will use this description to show an
example of two models of the same variety that have the same
skeleton but different retractions. In turn this will 
give a counterexample to a higher dimensional extension of 
Proposition \ref{lemma psh retraction curve}. 

Let $(K,\absval)$ be a complete
non-archimedean discretely valued field, 
$K^{\circ}$ the valuation ring,  $k$ the residue field, and  $S=\Spec(K^{\circ})$. Let
$\varpi $ be a uniformizer of $K^{\circ}$ and write
\begin{displaymath}
\lambda _{K}=-\log |\varpi |.
\end{displaymath}

Let $X$ be a smooth projective variety over $K$ of dimension $n$ and
$X^{\an}$ the 
associated Berkovich analytic space.  Let $\Xcal$ be an SNC model of
$X$ over $S$, that is an SNC projective scheme $\KX$ over
$S$ with generic fiber $X$ such that the  special
fiber, which is not assumed to be reduced, agrees as a closed subset
with a simple normal crossing divisor $D$ of $\KX$. To the model
$\Xcal$ we can associate {a} skeleton  $\Delta_\Xcal\subset \Xan$ and a
retraction $p_{\Xcal}\colon X^{\an}\to \Delta 
_{\Xcal}$, see
\cite[\S 3]{BFJ1} for details.

Let $L$ be an ample line bundle on $X$ and
$\Lcal$ a
nef model of $L$ on $\Xcal$. Let 
$\theta$ be the semipositive $(1,1)$-form in the class of
$L$ corresponding to the model $\Lcal$. 
Let $\mu $ be a positive Radon measure on $X^{\an}$ with support in
$\Delta _{\Xcal}$ such that $\mu
(X^{\an})=\deg_{L}(X)$. The
Monge-Amp\`ere equation looks for a $\theta $-psh function $\varphi$
on $X^{\an}$ such that
\begin{equation}\label{eq:3}
  (dd^{c}\varphi+\theta )^{\wedge n}=\mu .
\end{equation}
With the generality we are discussing in this paragraph,
there is not yet a 
definition of the class of $\theta $-psh
functions with all the properties of classical pluripotential theory, but every 
good definition of this class should include the class of $\theta $-psh model
functions as introduced in \ref{max psh}.

The following question is natural and in case of being true would be
of great help to solve the Monge-Amp\`ere equation in positive and
mixed characteristic.

\begin{question}\label{question:1}
  With the previous hypotheses, is it true that any solution $\varphi$ to
  the Monge-Amp\`ere equation \eqref{eq:3} satisfies
  \begin{equation}\label{eq:2}
    \varphi=\varphi\circ p_{\Xcal}?
  \end{equation}
\end{question}

We will see that this question has a negative
answer by exhibiting a counterexample in the context of toric
varieties. In fact, that this question has a negative answer is
related with Proposition \ref{lemma psh retraction curve} not being
true in higher dimension.
To this aim, we will consider a
smooth projective variety $X$ of dimension 2 and two models $\Xcal$ and $\Xcal'$ that
have the same skeleton
\begin{displaymath}
  \Delta =\Delta _{\Xcal}=\Delta _{\Xcal'}
\end{displaymath}
but with different retractions $p_{\Xcal}\not = p_{\Xcal'}$. We will
fix a semipositive $(1,1)$-form $\theta $ that is realized in both models
and construct two model functions $\varphi$ and $\varphi'$ on
$X^{\an}$ satisfying
\begin{alignat}{2}
  \varphi&\not = \varphi', & \varphi\mid_{\Delta }&=\varphi'\mid_{\Delta },\\
  \varphi&=\varphi\circ p_{\Xcal},&
  \varphi'&=\varphi'\circ p_{\Xcal'}.
\end{alignat}
As a consequence of these properties, we deduce that $\varphi= \varphi'\circ
p_{\Xcal} $ and that $\varphi'= \varphi\circ
p_{\Xcal'} $. Moreover $\varphi'$ will be a $\theta $-psh model function while
the model function $\varphi$ will not be $\theta $-psh.
Let $\mu \coloneqq   (dd^{c}\varphi'+\theta )^{\wedge 2}$. This is a
positive 
measure with support on $\Delta $ and $\varphi'$ is a solution of
the corresponding Monge-Amp\`ere equation. Since
\begin{displaymath}
  \varphi'\not = \varphi = \varphi'\circ p_{\Xcal},
\end{displaymath}
we see that $\varphi'$ is a counterexample to Question \ref{question:1} for
the model $\Xcal$. Moreover, if Proposition \ref{lemma psh
  retraction curve} were true in dimension 2, then
$\varphi = \varphi'\circ p_{\Xcal}$ would be a $\theta $-psh model
function, but it is not.

We place ourselves in the framework and notation of \cite{BPS}. The
results below will make explicit the skeleton and the retraction
associated to a toric SNC model of a toric variety.

Let $\Tbb\simeq \Gbb_{\rm m}^{n}$ be a split torus over $K$. We denote by
\begin{displaymath}
  M=\Hom(\Tbb,\Gbb_{\rm m}),\quad N=\Hom(\Gbb_{\rm m},\Tbb),  
\end{displaymath}
the lattices of characters and one-parameter subgroups of $\Tbb$. Then
$M=N^{\vee}$. We also denote $N_{\Rbb}=N\otimes \Rbb$ and
$M_{\Rbb}=M\otimes \Rbb$. The pairing between $u\in N_{\Rbb}$ and
$x\in M_{\Rbb}$ is denoted  by $\langle x,u \rangle$. 

Let now $X$ be a proper toric variety over $K$ and $\Xcal$
a proper toric model of $X$ over $S$. Then $X$ is described by a complete fan
$\Sigma $ in $N_{\R}$ and $\Xcal$ is described by a complete
SCR-polyhedral complex $\Pi$ in $N_{\R}$ whose recession fan satisfies $\rec(\Pi
)=\Sigma $ \cite[Thm.~3.5.4]{BPS}.

There is a map $\zeta_{K} \colon N_{\Rbb} \to \Tbb^{\an}$
that sends $u\in N_{\Rbb}$ to the seminorm on $K [M]$ given by
\begin{displaymath}
  \left|\sum_{m\in M}\alpha _{m}\chi^{m}\right|=\max_{m}|\alpha
  _{m}|e^{-\lambda _{K}\langle m,u \rangle}. 
\end{displaymath}
This is a particular case of the map denoted by $\theta _{\sigma}$ in 
\cite[Prop.-Def.~4.2.12]{BPS} composed with the homothety
of ratio $\lambda _{K}$.

There is also a map $\Val_{K} \colon 
\Tbb^{\an}\to N_{\Rbb}$ that sends a point $p\in \Tbb^{\an}$ to the
point $\Val_{K}(p)\in N_{\Rbb}$ determined by
\begin{displaymath}
  \langle m,\Val_{K}(p) \rangle = -\frac{1}{\lambda _{K}}\log |\chi^{m}(p)|,
\end{displaymath}
see \cite[Sec.~4.1]{BPS}. From the definition, it follows that $\Val_{K}
\circ \,
\zeta_{K} =\Id_{N_{\Rbb}}$.

To each polyhedron $\Lambda \in \Pi $ there is associated an orbit
$O(\Lambda )$ for the action of $\Tbb_{k}$ on the special fiber $\Xcal_{s}$ \cite[\S
3.5]{BPS}.  We denote by $\xi_{\Lambda }$ the generic point of
$O(\Lambda )$.

The relation of $\Val_{K}$ with the reduction map is given
by \cite[Cor.~4.5.2]{BPS}: a point $p\in \Tbb^{\an}$
satisfies $\red(p)\in O(\Lambda )$ if and only if $\Val_{K}(p)\in
\rint(\Lambda )$. 
The relation of $\zeta _{K}$ with the reduction map is given
by the next result.

\begin{lemma}\label{lemm:1} Let $\Lambda \in \Pi $. If $u$ lies in the
  relative interior of  
  $\Lambda $, then $\red(\zeta
  _{K}(u))=\xi_{\Lambda }$.
\end{lemma}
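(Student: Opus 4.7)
The plan is to realize the Gauss point $\zeta_K(u)$ inside the affine toric chart $\KU_\Lambda$ of $\KX$ that contains the orbit $O(\Lambda)$ and to match the prime ideal defining its reduction with the one defining $\overline{O(\Lambda)}$.

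First, I would recall from the toric dictionary in \cite{BPS} that $\Lambda \in \Pi$ corresponds to an affine open subscheme $\KU_\Lambda = \Spec K^\circ[\widetilde M(\Lambda)]$ of $\KX$ containing $O(\Lambda)$, where $\widetilde M(\Lambda) \subset M \oplus \Z_{\geq 0}$ is the semigroup of pairs $(m,l)$ such that the affine function $u' \mapsto \langle m, u'\rangle + l$ is non-negative on $\Lambda$ (with $x^{(0,1)}$ identified with $\varpi$). In this chart the orbit closure $\overline{O(\Lambda)}$ is cut out by the prime ideal $\mathfrak{p}_\Lambda$ generated by the monomials $\chi^m \varpi^l$ whose associated affine function is not identically zero on $\Lambda$; equivalently, the units of $\widetilde M(\Lambda)$ are those $(m,l)$ with $\langle m, u'\rangle + l \equiv 0$ on $\Lambda$, and $\mathfrak{p}_\Lambda$ is generated by the non-units.

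Next, for $u \in \rint(\Lambda)$ the estimate $|\chi^m \varpi^l (\zeta_K(u))| = e^{-\lambda_K(\langle m, u\rangle + l)} \leq 1$ on every generator shows that $\zeta_K(u) \in \KU_\Lambda^{\an}$. Its reduction $\red(\zeta_K(u))$ therefore corresponds to the prime ideal
\[
\mathfrak{p}_u \coloneqq \{ f \in K^\circ[\widetilde M(\Lambda)] : |f(\zeta_K(u))| < 1 \}.
\]
The toric Gauss-point formula $|\sum_m a_m \chi^m (\zeta_K(u))| = \max_m |a_m| e^{-\lambda_K \langle m, u\rangle}$ (no cancellation occurs for the toric seminorm) then turns the membership condition into $v(a_m) + \langle m, u\rangle > 0$ for every nonzero coefficient $a_m$.

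Finally, I would identify $\mathfrak{p}_u$ with $\mathfrak{p}_\Lambda$. The inclusion $\mathfrak{p}_\Lambda \subseteq \mathfrak{p}_u$ is immediate by evaluation at $u \in \rint(\Lambda)$. For the converse, given a monomial $a_m \chi^m \in K^\circ[\widetilde M(\Lambda)]$, the affine function $\varphi_m(u') \coloneqq v(a_m) + \langle m, u'\rangle$ is non-negative on $\Lambda$; since $\varphi_m$ is affine and $u$ is interior to $\Lambda$, it cannot attain its minimum value $0$ at $u$ without being identically zero on $\Lambda$. Hence $\varphi_m(u) > 0$ forces $\varphi_m \not\equiv 0$ on $\Lambda$, i.e.\ $a_m \chi^m \in \mathfrak{p}_\Lambda$. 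Summing over monomials gives $\mathfrak{p}_u \subseteq \mathfrak{p}_\Lambda$, so the two primes coincide and $\red(\zeta_K(u)) = \xi_\Lambda$. The only delicate step is this affine-function argument (obstructing interior minima of non-constant affine functions on a polyhedron); the rest is routine unwinding of the toric charts and the Gauss-point formula.
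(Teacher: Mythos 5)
Your proof is correct and is essentially the paper's own argument: both work in the affine toric chart attached to $\Lambda$, compute the prime ideal $\{f : |f(\zeta_K(u))|<1\}$ defining the reduction via the no-cancellation max formula for the Gauss point, and match it with the toric ideal of $O(\Lambda)$ (the paper simply quotes from \cite{BPS} the description of that ideal as generated by the monomials with $\langle m,u\rangle+l>0$ for $u\in\rint(\Lambda)$, which is exactly what your interior-minimum observation establishes). One small bookkeeping remark: the affine-function argument you single out as delicate is actually what is needed for the inclusion $\mathfrak{p}_\Lambda\subseteq\mathfrak{p}_u$ (a non-negative affine function on $\Lambda$ that is not identically zero must be strictly positive at a relative interior point), while the converse inclusion you attach it to is the trivial direction.
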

\begin{proof}
  We use the notation of \cite[\S 3.5]{BPS}. {In particular, we write
  $\widetilde N=N\oplus \Z$ and $\widetilde M=M\oplus \Z$. Let $\sigma
  $ be the cone of $\widetilde N$ generated by the set
  $\{(x,1)\,|\, x\in \Lambda \}$ and write $\widetilde M_{\Lambda
  }=\widetilde M\cap \sigma ^{\vee}$, where $\sigma ^{\vee}$ is the
  dual cone of $\sigma $.} Let $\Xcal_{\Lambda }$ be
  the affine toric scheme associated to $\Lambda $. The ring of
  functions of $\Xcal_{\Lambda }$ is
  \begin{displaymath}
    K^{\circ}[\Xcal_{\Lambda }]=K^{\circ}[\widetilde M_{\Lambda
    }]/(\chi^{(0,1)}-\varpi ). 
  \end{displaymath}
The orbit $O(\Lambda )$ is a closed subscheme of $\Xcal_{\Lambda
}$. If $u\in \rint(\Lambda )$, the ideal of $O(\Lambda )$ is the
ideal generated by the 
monomials $\chi^{(m,l)}$ with $(m,l)\in \widetilde M_{\Lambda
}$ and $\langle m,u\rangle + l >0$.

The generic fiber of $\Xcal_{\Lambda }$ is the affine toric variety
$X_{\rec (\Lambda )}=\Spec(K[M_{\rec(\Lambda )}])$. The natural 
inclusion $K^{\circ}[\Xcal_{\Lambda }]\subset K[M_{\rec(\Lambda
  )}]$ is given by $\chi^{(m,l)}\mapsto \varpi ^{l}\chi^{m}$. Any
point $p\in X_{\rec(\Lambda )}^{\an}$ determines a 
seminorm on $K^{\circ}[\Xcal_{\Lambda }]$. The set of points of
$X^{\an}_{\rec(\Lambda )}$ whose reduction belongs to $\Xcal_{\Lambda
}$ is
\begin{displaymath}
  C=\{p\in X^{\an}_{\rec(\Lambda )}\mid |f(p)|\le 1, \ \forall f\in
  K^{\circ}[\Xcal_{\Lambda }]\}.  
\end{displaymath}
Given a point $p\in C$, then $\red(p)$ is the point corresponding to
the prime ideal
\begin{displaymath}
  \mathfrak{q}_{p}=\{f\in
  K^{\circ}[\Xcal_{\Lambda }]\mid  |f(p)|< 1 \}.
\end{displaymath}
Every $f\in K^{\circ}[\Xcal_{\Lambda }]$ can be written as a 
sum 
\begin{displaymath}
  f=\sum_{(m,l)\in \widetilde M_{\Lambda }} \alpha_{(m,l)} \chi^{(m,l)}, 
\end{displaymath}
with $|\alpha _{(m,l)}|=0, 1$ and 
only a finite number of coefficients $\alpha_{(m,l)}$ different
from zero.

By the definition of $\zeta_{K}$,
\begin{align*}
  \mathfrak{q}_{\zeta _{K}(u)}&=\left\{\sum \alpha _{(m,l) }\chi^{(m,l)}\in
                                K^{\circ}[\Xcal_{\Lambda }]\,\middle |\, |\varpi |^{l}
                                e^{-\lambda _{K}\langle m,u \rangle} <1
                                \right\}\\
                                &=\left\{\sum \alpha _{(m,l) }\chi^{(m,l)}\in
  K^{\circ}[\Xcal_{\Lambda }]\,\middle| \,\langle m,u \rangle + l >0
                                  \right\}.
\end{align*}
Since $u\in \rint(\Lambda )$, we deduce that $\mathfrak{q}_{\zeta
  _{K}(u)}$ is the ideal of $O(\Lambda )$ and therefore
\begin{math}
  \red(\zeta_{K}(u))=\xi_{\Lambda }.
\end{math}
\end{proof}

We add now to $X$ the condition of being regular, which is equivalent
to $\Sigma $ being
unimodular, and to $\Xcal$ the condition of being an SNC model. By
\cite[Chap.~IV, \S 3.I item  d)]{kempfetal}, $\Xcal$ is regular if
and only if the rational fan in $N_{\Rbb}\times \Rbb_{\ge 0}$
generated by $\Pi \times\{1\}$ is
unimodular. In this case, the model is always an SNC model. On the
other hand, by \cite[Example 3.6.11]{BPS2} the model will be strictly
semistable (SNC with reduced special fiber) if, in addition, all the
vertices are lattice points.

Since a unimodular fan is necessarily simplicial,
for each polyhedron $\Lambda
\in \Pi $, of dimension $t$, we can write
\begin{equation}\label{eq:1}
  \Lambda = \conv(p_{0},\dots, p_{s})+\cone(v_{s+1},\dots,v_{t}),
\end{equation}
where $s\le t$, $p_{i}$ are points of $N_\Rbb$ and $v_{i}$ are vectors
on the tangent space to $N_{\Rbb}$ at a point, that we identify with
$N_{\Rbb}$.

We define the  \emph{combinatorial skeleton} as
\begin{displaymath}
  \Delta _{\Pi} = \bigcup_{\substack{\Lambda  \in \Pi \\ \Lambda  \text{
        bounded}}} \Lambda  .
\end{displaymath}
There is a \emph{combinatorial retraction} $p_{\Pi}\colon N_{\Rbb}\to \Delta _{\Pi}$ defined
as follows. Let $u\in N_{\Rbb}$ and let $\Lambda  \in \Pi $ be a
polyhedron of dimension $t$
with $u\in \Lambda  $. Write $\Lambda  $ as in equation \eqref{eq:1}. Therefore
$u$ can be written uniquely as
\begin{equation}\label{eq:4}
  u=\sum_{i=0}^{s}a_{i} p_{i}+ \sum_{j=s+1}^{t}\lambda _{j} v_{j},
\end{equation}
with $a_{i},\lambda _{j}\ge 0$, and $\sum a_{i}=1$.
Then
\begin{displaymath}
  p_{\Pi }(u)=\sum_{i=0}^{s}a_{i} p_{i}.
\end{displaymath}

\begin{rem}
    The fan $\Sigma $ determines a compactification $N_{\Sigma }$ of
    $N_{\Rbb}$ as in \cite[\S 4.1]{BPS} such that the retraction
    $p_{\Pi }$ can be extended to a 
    continuous map $N_{\Sigma }\to \Delta _{\Pi }$. 
\end{rem}

The following result explicites the skeleton and retraction associated
to the model $\Xcal$.

\begin{theo} With the previous hypotheses, the skeleton $\Delta
  _{\Xcal}\subset X^{\an}$ is given by 
  \begin{displaymath}
    \Delta _{\Xcal}=\zeta_{K} (\Delta _{\Pi}).
  \end{displaymath}
  The restriction to $\Tbb^{\an}$ of the retraction $p_{\Xcal}$ is the
  composition
  \begin{displaymath}
    p_{\Xcal}\mid_{\Tbb^{\an}}=\zeta_{K} \circ p_{\Pi}\circ \Val_{K} .
  \end{displaymath}
\end{theo}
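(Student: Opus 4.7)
The plan is to prove the two assertions in parallel, using the bijective correspondence between the polyhedra of $\Pi$ and the strata of the special fiber $\Xcal_s$ together with Lemma \ref{lemm:1}. Recall that for an SNC model, the skeleton $\Delta_\Xcal$ is built by attaching, to each stratum of $\Xcal_s$ (a connected component of an intersection $E_{i_0}\cap\dots\cap E_{i_s}$ of prime components of the special fiber), a closed simplex of dimension $s$ whose vertices are the divisorial valuations associated to the $E_{i_j}$ and whose relative interior consists of quasi-monomial valuations with strictly positive weights along these components; the retraction $p_\Xcal$ sends a point $x\in X^\an$ to the unique quasi-monomial valuation in this simplex determined by the center and by the valuations of local equations of the components.

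First I would identify $\zeta_K(\Delta_\Pi)$ with $\Delta_\Xcal$. Given a bounded polyhedron $\Lambda\in\Pi$ with vertices $p_0,\dots,p_s$, let $E_{p_i}=\overline{O(\{p_i\})}$ be the corresponding prime divisor on the special fiber, and observe that $\bigcap_i E_{p_i}\supset O(\Lambda)$ is exactly the stratum indexed by $\Lambda$. For a point $u=\sum a_i p_i$ with $a_i>0$ and $\sum a_i=1$ in the relative interior of $\Lambda$, Lemma \ref{lemm:1} gives $\red(\zeta_K(u))=\xi_\Lambda$, so that $\zeta_K(u)$ has center in the stratum $O(\Lambda)$. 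One then reads off, on a toric affine chart around $\xi_\Lambda$ with coordinates given by characters $\chi^{m_j}$ cutting out the $E_{p_i}$, that $\zeta_K(u)$ is the quasi-monomial valuation whose weight on $E_{p_i}$ is precisely $a_i$ up to the multiplicity $b_i$ of $E_{p_i}$ in the special fiber. This identifies $\zeta_K|_\Lambda$ with the simplex of $\Delta_\Xcal$ indexed by $O(\Lambda)$; varying $\Lambda$ over all bounded cells of $\Pi$ yields $\zeta_K(\Delta_\Pi)=\Delta_\Xcal$ as claimed, with unbounded recession directions contributing no further divisorial valuations of $\Xcal$ since such directions correspond to horizontal (generic-fiber) strata.

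Second, for the retraction, let $p\in\Tbb^\an$ and set $u:=\Val_K(p)\in N_\R$. Since $\Pi$ is complete, there is a unique polyhedron $\Lambda\in\Pi$ with $u\in\rint(\Lambda)$, and by \cite[Cor.~4.5.2]{BPS} the reduction $\red(p)$ lies in $O(\Lambda)$, so $p_\Xcal(p)$ belongs to the simplex of $\Delta_\Xcal$ attached to $O(\Lambda)$. Writing $\Lambda=\conv(p_0,\dots,p_s)+\cone(v_{s+1},\dots,v_t)$ and $u=\sum_{i=0}^{s}a_i p_i+\sum_{j=s+1}^{t}\lambda_j v_j$, I would check that the weight on $E_{p_i}$ of the quasi-monomial valuation $p_\Xcal(p)$ depends only on the convex-combination coefficients $a_i$ and not on the unbounded part. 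This is because the generators of the cone directions correspond locally to units on the chart around $\xi_\Lambda$ (the prime components $E_{p_i}$ are cut out by characters dual to the bounded vertices only), so the valuation of a local equation of $E_{p_i}$ at $p$ equals $a_i$ up to $b_i$, exactly as at the point $\zeta_K(\sum a_i p_i)$. Combined with $p_\Pi(u)=\sum a_i p_i$, this yields $p_\Xcal(p)=\zeta_K(p_\Pi(\Val_K(p)))$.

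The main obstacle I anticipate is the careful bookkeeping of multiplicities in Step 1 and Step 2: the vertices $p_i\in N_\R$ of $\Pi$ need not be lattice points, and the passage from the polyhedral coefficients $(a_i,\lambda_j)$ to the BFJ quasi-monomial weights on $(E_{p_0},\dots,E_{p_s})$ is mediated by the multiplicities $b_i$ in the special-fiber divisor $\sum b_i E_{p_i}$. Controlling this rescaling requires working on an explicit affine toric chart $K^\circ[\Xcal_\Lambda]=K^\circ[\widetilde M_\Lambda]/(\chi^{(0,1)}-\varpi)$, choosing local equations of the $E_{p_i}$ among the monomials $\chi^{(m,l)}$, and comparing with the definition of $\zeta_K$; the formula $|\chi^{(m,l)}(\zeta_K(u))|=|\varpi|^l e^{-\lambda_K\langle m,u\rangle}$ together with the description of $\rin(\Lambda)$ makes this bookkeeping tractable but delicate, and is the technical heart of the argument.
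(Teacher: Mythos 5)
Your proposal is correct, but it runs along a genuinely different track from the one taken in the appendix. You work throughout with the quasi-monomial description of the skeleton: each simplex of $\Delta_\Xcal$ is parametrized by weights on the components $E_{p_i}$ of the special fiber, and $p_\Xcal(x)$ is the monomial valuation whose weights are the (normalized) valuations at $x$ of local equations of the $E_{p_i}$; you then compute these weights for $\zeta_K(u)$ and for an arbitrary $p\in\Tbb^\an$ on an explicit chart $K^\circ[\widetilde M_\Lambda]/(\chi^{(0,1)}-\varpi)$, observing that the dual coordinates cutting out the $E_{p_i}$ annihilate the recession directions $(v_j,0)$, so only the barycentric coefficients $a_i$ survive. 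The appendix instead dualizes: it introduces the evaluation maps $\ev_\Xcal(x)=(\varphi_D(x))_D$ and $\ev_\Pi(u)=(-\phi_D(u))_D$ into $\Div_0(\Xcal)_\R^{\ast}$, checks the single identity $\ev_\Xcal|_{\Tbb^\an}=\ev_\Pi\circ\Val_K$, and then invokes the uniqueness characterization of $\emb_\Xcal$ from \cite[Thm.~3.1]{BFJ1} (section of $\ev_\Xcal$ plus the reduction condition, the latter verified via Lemma \ref{lemm:1}); the retraction formula then drops out formally from $\rec(\phi_D)=0$, which gives $\phi_D=\phi_D\circ p_\Pi$ and hence $\ev_\Pi=\ev_\Pi\circ p_\Pi$. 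The trade-off is exactly the one you flag as the "technical heart": the paper's route absorbs all multiplicity bookkeeping into the functions $\phi_D$ and never needs to exhibit the weights, whereas your route makes the quasi-monomial structure of $\zeta_K(u)$ explicit (weights $a_i/b_i$, with $\sum b_i(a_i/b_i)=1$) at the cost of tracking the multiplicities $b_i$ — unavoidable here since the model is only assumed SNC, so the vertices $p_i$ need not be lattice points. Your argument does require, as an input, the identification of $\emb_\Xcal$ and $p_\Xcal$ with the monomial-valuation/weight description (\cite[Prop.~3.2]{BFJ1}, or \cite[\S 3.1]{Mustata_Nicaise} in positive and mixed characteristic); you should cite this explicitly, since it plays the role that the uniqueness statement of \cite[Thm.~3.1]{BFJ1} plays in the paper's proof.
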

\begin{proof}
  We start by recalling the construction of $\Delta _{\Xcal}$ and
  $p_{\Xcal}$ from \cite{BFJ1}. Note that, in loc.~cit. the residue
  field $k$ is of characteristic zero, but once we assume that the
  model $\Xcal$ is an SNC model, using the results of 
  \cite[\S~3.1]{Mustata_Nicaise} it is possible to extend the
  presentation of \cite{BFJ1} to the
  case of positive and mixed characteristic.

  Let $\Div_{0}(\Xcal)$ be the group of vertical Cartier divisors on
  $\Xcal$. Denote $\Div_{0}(\Xcal)_{\Rbb}=\Div_{0}(\Xcal)\otimes \Rbb$
  and let $\Div_{0}(\Xcal)_{\Rbb}^{\ast}$ be the dual. As explained in 
  \ref{model functions},  each $D\in
  \Div_{0}(\Xcal)$ determines a model function  $\varphi_{D}$. The map
  $D\mapsto \varphi_{D}$ is linear in $D$ and can be extended by
  linearity  to a map
   $\Div_{0}(\Xcal)_{\Rbb}\to C^{0}(X^{\an})$. 

  There
  is a map $\ev_{\Xcal}\colon \Xan \to \Div_{0}(\Xcal)_{\Rbb}^{\ast}$
  determined by
  \begin{equation}\label{eq:8}
    \langle D,\ev_{\Xcal}(x) \rangle =\varphi_{D}(x).
  \end{equation}
  Let $D_{1},\dots,D_{\ell}$ be the components of the
  special fiber $\Xcal_{s}$. Each $D_{i}$, $i=1,\dots,\ell$, determines
  a divisorial point $x_{i}\in X^{\an}$ and we denote by
  $e_{i}=\ev_{\Xcal}(x_{i})$. For each $J\subset \{1,\dots,\ell\}$ we
  write $D_{J}=\bigcap_{j\in J}D_{j}$ and $\sigma
  _{J}=\conv(e_{j},j\in J)$.
  Then the abstract skeleton
  of $\Xcal$ is
  \begin{displaymath}
    \Delta ^{\abs}_{\Xcal}=\bigcup_{\substack{J\subset
        \{1,\dots,\ell\}\\D_{J}\not = \emptyset}} \sigma _{J}\subset
  \Div_{0}(\Xcal)_{\Rbb}^{\ast}.
  \end{displaymath}
By \cite[Thm.~3.1]{BFJ1}, the image of $\ev_{\Xcal}$ is $\Delta
^{\abs}_{\Xcal}$ and there exists a unique function
$\emb_{\Xcal}\colon \Delta ^{\abs}_{\Xcal}\to X^{\an}$ such that
\begin{enumerate}
\item \label{item:1} $\ev_{\Xcal}\circ \emb_{\Xcal}=\Id_{\Delta ^{\abs}_{\Xcal}}$;
\item \label{item:2} for each $s\in \Delta _{\Xcal}^{\abs}$, if $s\in \rint(\sigma
  _{J})$, then $\red(\emb_{\Xcal}(s))=\xi_{D_{J}}$, where
  $\xi_{D_{J}}$ is the generic point of $D_{J}$.
\end{enumerate}
Then the skeleton and the retraction are given by
\begin{displaymath}
 \Delta _{\Xcal}=\emb_{\Xcal}(\Delta ^{\abs}_{\Xcal})\quad \and \quad
 p_{\Xcal}=\emb_{\Xcal}\circ \ev_{\Xcal}. 
\end{displaymath}

We now go back to the regular toric case. In particular, $X$ is a toric smooth
projective variety
over $K$ and $\Xcal$ is a toric projective SNC model. Then all the
divisors of $\Div_{0}(\Xcal)$ are toric divisors. Therefore, for $D\in
\Div_{0}(\Xcal)_{\Rbb}$, the function $\varphi_{D}$ is invariant under
the action of the compact torus $\Sbb=\Val_{K}^{-1}(0)$. The restriction
of $\varphi_{D}$ to $\Tbb^{\an}$ factorizes as
\begin{equation}\label{eq:9}
  \varphi_{D}\mid _{\Tbb^{\an}}=-\phi _{D}\circ \Val_{K},
\end{equation}
where $\phi _{D}$ is the function from \cite[Def.~4.3.6]{BPS} corresponding to the
trivial line bundle $\Ocal_{X}$ with the metric determined by $D$ and
the section $1$.

We now define $\ev_{\Pi}\colon N_{\Rbb}\to \Div_{0}(\Xcal)^{\ast}$ by
  \begin{displaymath}
    \langle D,\ev_{\Pi}(u) \rangle =-\phi_{D}(u).
  \end{displaymath}
  By construction, the restriction of $\ev_{\Pi}$ to each polyhedron
  $\Lambda  \in \Pi $ is affine. Moreover, using \eqref{eq:8} and
  \eqref{eq:9} we deduce that
  \begin{equation}
    \label{eq:7}
    \ev_{\Xcal}\mid _{\Tbb^{\an}}=\ev_{\Pi}\circ \Val_{K}.
  \end{equation}

  As before let $D_{1},\dots,D_{\ell}$ be the components of the
  special fiber $\Xcal_{s}$ and $x_{i}$ the divisorial point
  determined by $D_{i}$. Then the set of vertices of $\Pi $ is
  $\Pi^{0}=\{u_{1},\dots, u_{\ell}\}$, where
  $u_{i}=\Val_{K}(x_{i})$. Therefore $\ev_{\Pi}(u_{i})=e_{i}$. Since
  $\ev_{\Pi}$ is affine in each polyhedron of $\Pi$ we deduce that the
  image of $\ev_{\Pi}$ is $\Delta ^{\abs}_{\Xcal}$ and that
  $\ev_{\Pi}$ determines a homeomorphism
  $\Delta _{\Pi}\to \Delta ^{\abs}_{\Xcal}$. We define
  $\emb_{\Pi}\colon \Delta ^{\abs}_{\Pi}\to N_{\Rbb} $ as the
  composition of the inverse of this homeomorphism with the inclusion
  $\Delta _{\Pi}\hookrightarrow N_{\R}$. Using equation \eqref{eq:7}
  and Lemma \ref{lemm:1} one can check that
  $\zeta_{K} \circ \emb_{\Pi}$ satisfies the conditions \eqref{item:1}
  and \eqref{item:2} that characterize $\emb_{\Xcal}$. Therefore
  \begin{equation}\label{eq:6} 
    \emb_{\Xcal}=\zeta_{K} \circ \emb_{\Pi}.
  \end{equation}
  We next claim that 
  $ p_{\Pi }=\emb_{\Pi}\circ \ev_{\Pi}. $ Indeed, for every $D\in
  \Div_{0}(\Xcal)$, since $D$ is a model of the trivial vector bundle,
  we know that $\rec(\phi _{D})$ is the zero function. Therefore, writing any 
  $u\in \Lambda \in \Pi $ is as in \eqref{eq:4}, one can show that 
  \begin{displaymath}
    \phi _{D}=\phi _{D}\circ p_{\Pi }.
  \end{displaymath}
  This implies that $\ev_{\Pi }=\ev_{\Pi}\circ \,p_{\Pi }$. By
  construction $\emb_{\Pi}\circ \ev_{\Pi}$ is the identity in the
  image of $p_{\Pi }$. Therefore
  \begin{equation}\label{eq:5}
    \emb_{\Pi}\circ \ev_{\Pi}=\emb_{\Pi}\circ \ev_{\Pi}\circ p_{\Pi}=p_{\Pi}.
  \end{equation}

Using equations \eqref{eq:5} \eqref{eq:6} and \eqref{eq:7} we deduce
that 
  \begin{displaymath}
    \Delta _{\Xcal}=\emb_{\Xcal}(\Delta ^{\abs}_{\Xcal})=\zeta_{K}
    (\emb_{\Pi}(\Delta ^{\abs}_{\Xcal}))=\zeta_{K} (\Delta _{\Pi})
  \end{displaymath}
  and
  \begin{displaymath}
    p_{\Xcal}|_{\Tbb^{\an}}=\emb_{\Xcal}\circ
    \ev_{\Xcal}|_{\Tbb^{\an}}=\zeta_{K} \circ \emb_{\Pi }\circ
    \ev_{\Pi
    }\circ \Val_{K}= \zeta_{K} \circ p_{\Pi} \circ \Val_{K}
  \end{displaymath}
  concluding the proof.
\end{proof}

\begin{figure}[!h]
  \centering
  \input{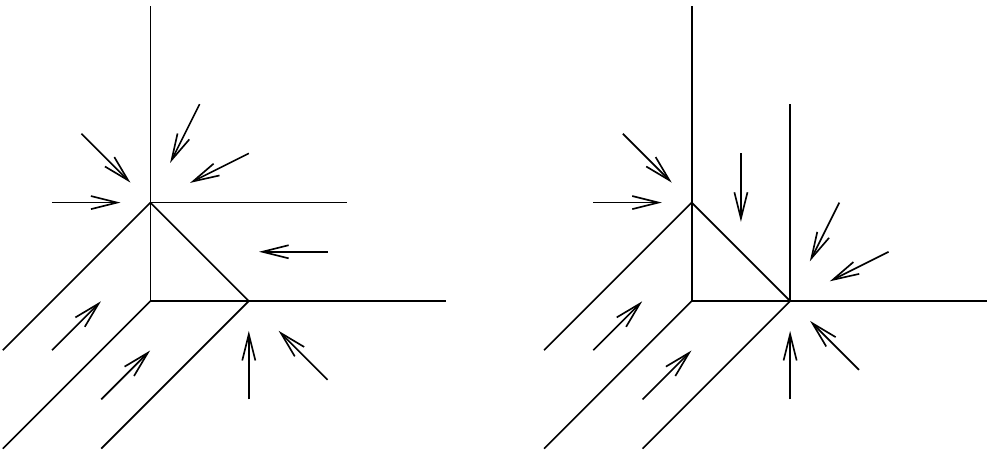_t}  
  \caption{{Subdivisions corresponding to the toric models
      $\Xcal$ and $\Xcal'$.}
}  
  \label{fig:example}
\end{figure}

Consider the toric variety $X=\Pbb_K^2$. Let $\Pbb^2_{S}$ be the
projective space over $S$, 
and let $(x_0:x_1:x_2)$ be homogeneous coordinates of the special fiber
$\Pbb^2_{k}$. Consider the model $\Xcal$ of
$X$ obtained by blowing up 
$\Pbb^2_{S}$ at the line $x_2=0$ inside the special fiber and then
blowing up the strict transform   of the line $x_{1}=0$. The
SCR-polyhedral subdivision $\Pi $ associated to this model 
is depicted  in the left side of figure \ref{fig:example}.
Consider also the model $\Xcal'$ of
$X$ obtained as before, switching $x_{1}$ and $x_{2}$. The
SCR-polyhedral subdivision $\Pi' $ associated to this new model 
is depicted  in the right side of figure \ref{fig:example}. Both toric
schemes $\Xcal$ and $\Xcal'$ are SNC models (even more, they are
strictly semistable models) of $\Pbb^{2}_{K}$. 

The skeleton associated to both models is the simplex
\begin{displaymath}
  \Delta =\conv((0,0),(1,0),(0,1))
\end{displaymath}
and both retractions $p_{\Pi}$ and $p_{\Pi'}$ are also depicted on the
same figure. For 
instance the retraction $p_{\Pi}$ sends every point of $\sigma _{2}$ to the
point $(1,0)$, while the same retraction restricted to the polyhedron
$\sigma _{1}$ is the horizontal projection onto the segment
$\overline{(0,1)(1,0)}$ along the direction $(-1,0)$. By contrast, the
retraction $p_{\Pi'}$ sends both cones $\sigma _{1}'$ and $\sigma
_{2}'$ to the point $(1,0)$.

Consider the divisor $D$ of $\Pbb_K^{2}$ given by the line at infinity
and the divisor $\Dcal$ of $\Pbb_S^{2}$ given by the closure of $D$.
Let $L=\Ocal_{\Pbb_K^{2}}(D)$ and $\Lcal=\Ocal_{\Pbb^{2}_{S}}(\Dcal)$. Then $\Lcal $ is
a model of $L$ in $\Pbb_{S}^{2}$ and can be pulled back to both
$\Xcal$ and $\Xcal'$. Let $\theta $ be the
closed $(1,1)$-form defined by this model.

Let $\Psi \colon N_{\Rbb}\to \Rbb$ be the function
\begin{displaymath}
  \Psi (u,v)=\min(u,v,0).
\end{displaymath}
This is the function that determines the toric divisor $D$.

By \cite[Thm.~4.8.1]{BPS}, the space of all continuous $\theta
$-psh functions on $X^{\an}$ that are invariant under the action of
the compact torus $\Sbb$ can be identified with the set
of all bounded functions 
$f\colon N_{\Rbb}\to \Rbb$ such that $\Psi +f$ is concave.
This identification sends $f\colon N_{\Rbb}\to \Rbb$ to the
unique continuous function $\varphi\colon X^{\an}\to \Rbb$ such that  
$\varphi\mid _{\Tbb^{\an}}=-f\circ \Val_{K}$. 

Let $g\colon \Delta \to \Rbb$ the affine function that has the value
$1$ at the point $(1,0)$ and the value $0$ at the points $(0,0)$ and
$(0,1)$ and put
\begin{displaymath}
  f=g\circ p_{\Pi },\qquad f'= g\circ p_{\Pi '}.
\end{displaymath}
One easily verifies that
\begin{displaymath}
  \Psi +f'=\min(1,1+v,u)
\end{displaymath}
which is concave. On the other hand, the restriction of  $\Psi +f$ to
$\sigma _{3}$ is $0$ while its restriction to $\sigma _{1}$ is $1-v$,
hence $\Psi +f$ is not concave. 

Let $\varphi'$ be the continuous function on $X^{\an}$ whose
restriction to $\Tbb^{\an}$ is $-f'\circ \Val_{K} $. It is a
model $\theta $-psh function. The function $-f\circ \Val_{K} $ also extends to a model
function $\varphi$ on $X^{\an}$ but it is not $\theta$-psh because $f$ is not
concave \cite[Thm.~3.7.1~(2)]{BPS}.

We now write
\begin{displaymath}
  \mu =(dd^{c} \varphi' + \theta )^{\wedge 2}.
\end{displaymath}
By \cite[Thm.~4.7.4]{BPS} the measure $\mu $ is the atomic
measure with support in $\zeta_{K} ((1,0))$ with total mass one. Hence
its support is contained in $\Delta =\Delta _{\Xcal}$.

Summing up, $\mu $ is a measure with support in $\Delta =\Delta
_{\Xcal}$, the $\theta $-psh function $\varphi'$ is a solution of the
corresponding Monge-Amp\`ere equation but $\varphi'\not = \varphi'\circ p_{\Xcal}$
showing that the answer to Question \ref{question:1} is
negative. Moreover $\varphi=\varphi'\circ p_{\Xcal}$ is not $\theta $-psh, showing
that Proposition \ref{lemma psh retraction curve} does not extend to
dimension $\ge 2$.

Consider now a common refinement of the subdivisions corresponding to
the toric models $\Xcal$ and $\Xcal '$ and such that the
corresponding toric model  $\Xcal''$ is SNC. Then the skeleton $\Delta _{\Xcal''}$ will be strictly
bigger than $\Delta $. In particular $\Delta _{\Xcal''}$ contains the
unit square $0\le u,v\le 1$. It can be shown that
$\varphi'=\varphi'\circ p_{\Xcal''}$. Thus even if the solutions of
the Monge-Amp\`ere equation do not factor through the retraction
corresponding to the model $\Xcal$, they factor through the retraction
associated to a refined model. Mattias Jonsson asked us if one can
hope this phenomenon to hold in general. More concretely,  one can ask the
following question.

\begin{question}\label{question:2}
  With the same hypotheses as in Question \ref{question:1}, is it true that
  there exists a morphism of models $\Xcal'\to \Xcal$ such that 
  any solution $\varphi$ of
  the Monge-Amp\`ere equation \eqref{eq:3} satisfies
  \begin{equation}\label{eq:10}
    \varphi=\varphi\circ p_{\Xcal'}?
  \end{equation}
\end{question}

\medskip

{\footnotesize
{\sc J.I. Burgos Gil,
Instituto de Ciencias Matem\'aticas (CSIC-UAM-UCM-UCM3), 
Calle Nicol\'as Cabrera 15, Campus de la Universidad 
Aut\'onoma de Madrid, Cantoblanco, 28049 Madrid, Spain\\
\indent
{\it E-mail address:} {\tt burgos@icmat.es}

\medskip

M. Sombra, Instituci\'o Catalana de
Recerca i Estudis Avançats (ICREA). Passeig Llu\'is Companys~23,
08010 Barcelona, Spain \\
Departament de
Matem\`atiques i Inform\`atica, Universitat de Barcelona (UB). Gran
Via 585, 08007 Bar\-ce\-lo\-na, Spain\\
{\it E-mail address:} {\tt sombra@ub.edu}
}}


\newcommand{\etalchar}[1]{$^{#1}$}
\def\cprime{$'$}

\end{document}